\setlist[itemize]{leftmargin=*}
\setlist[enumerate]{leftmargin=*,label=\roman*),ref=\roman*)}
\newlist{subenumerate}{enumerate}{2}
\setlist[subenumerate]{leftmargin=*,label=\alph*),ref=\alph*)}
\definecolor{darkblue}{rgb}{0,0,0.6} 
\setlist[enumerate,1]{label={(\arabic*)}}
\DeclareMathOperator{\cofib}{cofib}
\DeclareMathOperator{\fib}{fib}
\DeclareMathOperator{\Eq}{Eq}
\newcommand{\rmB}{\mathrm{B}}
\newcommand{\bbZ}{\mathbb{Z}}
\newcommand{\bbQ}{\mathbb{Q}}
\newcommand{\bbN}{\mathbb{N}}
\newcommand{\bbS}{\mathbb{S}}
\newcommand{\calP}{\mathcal{P}}
\newcommand{\J}{\mathcal{J}}
\newcommand{\calA}{\mathcal A}
\newcommand{\calB}{\mathcal B}
\newcommand{\calC}{\mathcal C}
\newcommand{\calD}{\mathcal D}
\newcommand{\calE}{\mathcal E}
\theoremstyle{plain}
\newtheorem{thm}{Theorem}[section]
\newtheorem{thmx}{Theorem}
\newenvironment{mythmx}[1]
  {\innercustomthm}
  {\endinnercustomthm}
\newtheorem{lem}[thm]{Lemma}
\newtheorem{cor}[thm]{Corollary}
\newtheorem{prop}[thm]{Proposition}
\theoremstyle{definition}
\newtheorem{defn}[thm]{Definition}
\newtheorem{rem}[thm]{Remark}
\newtheorem{warn}{Warning}
\newtheorem{exam}[thm]{Example}
\theoremstyle{remark}
\theoremstyle{plain}
\newtheorem*{thm*}{Theorem}
\newtheorem*{lem*}{Lemma}
\newtheorem*{cor*}{Corollary}
\newtheorem*{prop*}{Proposition}
\newtheorem*{fact*}{Fact}
\newtheorem*{claim*}{Claim}
\newtheorem*{conj*}{Conjecture}
\theoremstyle{definition}
\newtheorem*{defn*}{Definition}
\newtheorem*{notn*}{Notation}
\newtheorem*{convs*}{Conventions}
\newtheorem*{ackn*}{Acknowledgements}
\newtheorem*{rem*}{Remark}
\newtheorem*{warn*}{Warning}
\newtheorem*{disc*}{Discussion}
\newtheorem*{quest*}{Question}
\newtheorem*{exams*}{Examples}
\newtheorem*{exam*}{Example}
\newtheorem*{constr*}{Construction}
\newtheorem*{goal*}{Goal}
\theoremstyle{remark}
\newcommand{\from}{\colon}
\newcommand{\id}{\mathrm{Id}}
\newcommand{\defeq}{\vcentcolon=}
\newcommand{\op}{{\mathrm{op}}}
\DeclareFontFamily{T1}{cbgreek}{}
\DeclareFontShape{T1}{cbgreek}{m}{n}{<-6>  grmn0500 <6-7> grmn0600 <7-8> grmn0700 <8-9> grmn0800 <9-10> grmn0900 <10-12> grmn1000 <12-17> grmn1200 <17-> grmn1728}{}
\DeclareSymbolFont{quadratics}{T1}{cbgreek}{m}{n}
\DeclareMathSymbol{\qoppa}{\mathord}{quadratics}{19}
\DeclareMathSymbol{\Qoppa}{\mathord}{quadratics}{21}
\newcommand{\Pn}{\mathrm{Pn}}
\DeclareMathOperator{\He}{He}
\DeclareMathOperator{\Fm}{Fm}
\DeclareMathOperator{\Qdot}{Q_\bullet}
\newcommand{\Ctwo}{{\mathrm{C}_2}}
\newcommand{\hCtwo}{{\mathrm{hC}_2}}
\newcommand{\tCtwo}{{\mathrm{tC}_2}}
\newcommand{\Lnpf}{L_n^{p,f}}
\newcommand{\Lnf}{L_n^f}
\newcommand{\LTn}{L_{T(n)}}
\newcommand{\LTi}{L_{T(i)}}
\newcommand{\Einf}{\mathbf{E}_\infty}
\newcommand{\Eone}{\mathbf{E}_1}
\DeclareMathOperator{\flatAdd}{U_\flat}
\newcommand{\CMon}{\mathrm{CMon}}
\newcommand{\CGrp}{\mathrm{CGrp}}
\newcommand{\gp}{\mathrm{gp}}
\newcommand{\CAlg}{\mathrm{CAlg}}
\newcommand{\Nm}{\mathrm{Nm}}
\DeclareMathOperator{\Fun}{Fun}
\DeclareMathOperator{\Map}{Map}
\DeclareMathOperator{\map}{map}
\DeclareMathOperator{\TwAr}{TwAr}
\DeclareMathOperator{\Mod}{Mod}
\DeclareMathOperator{\Perf}{Perf}
\DeclareMathOperator{\Proj}{Proj}
\DeclareMathOperator*{\colim}{colim}
\DeclareMathOperator{\Stab}{Stab}
\DeclareMathOperator{\Hyp}{Hyp}
\newcommand{\A}{\mathcal{A}}
\newcommand{\B}{\mathcal{B}}
\newcommand{\C}{\mathcal{C}}
\newcommand{\An}{\mathcal{S}}
\newcommand{\Spaces}{\mathcal{S}}
\newcommand{\PP}{\mathcal{P}}
\newcommand{\Span}{\mathrm{Span}}
\newcommand{\Sp}{\mathrm{Sp}}
\newcommand{\Spcn}{\mathrm{Sp}^\mathrm{cn}}
\newcommand{\Cat}{\mathrm{Cat}_\infty}
\newcommand{\Catex}{\mathrm{Cat}^{\mathrm{ex}}_\infty}
\newcommand{\Cath}{\mathrm{Cat}^{\mathrm{h}}_\infty}
\newcommand{\Catp}{\mathrm{Cat}^{\mathrm{p}}_\infty}
\newcommand{\Catpidem}{\mathrm{Cat}^{\mathrm{p}}_\mathrm{idem}}
\newcommand{\Catpd}{\mathrm{Cat}^{\mathrm{pd}}}
\newcommand{\Catadd}{\mathrm{Cat}^{\mathrm{add}}_\infty}
\newcommand{\Cataddflat}{\mathrm{Cat}^{\mathrm{add}}_\flat}
\newcommand{\Catsadd}{\mathrm{Cat}^{\mathrm{sadd}}_\infty}
\newcommand{\Catperf}{\mathrm{Cat}^{\mathrm{perf}}_\infty}
\newcommand{\Catah}{\mathrm{Cat}^{\mathrm{ah}}_\flat}
\newcommand{\Catap}{\mathrm{Cat}^{\mathrm{ap}}_\flat}
\newcommand{\Catcoprod}{\mathrm{Cat}^{\amalg}_\infty}
\newcommand{\rmD}{\mathrm{D}}
\newcommand{\DQoppa}{\mathrm{D}_\Qoppa}
\DeclareMathOperator{\Funaq}{Fun^{aq}}
\DeclareMathOperator{\Funq}{Fun^{q}}
\DeclareMathOperator{\GWspace}{\mathcal{GW}}
\DeclareMathOperator{\Lspace}{\mathcal{L}}
\DeclareMathOperator{\Kspace}{\mathcal{K}}
\DeclareMathOperator{\THR}{TH\mathbb R}
\DeclareMathOperator{\K}{{K}}
\DeclareMathOperator{\GW}{{GW}}
\DeclareMathOperator{\KR}{{K}\mathbb{R}}
\DeclareMathOperator{\LL}{{L}}
\DeclareMathOperator{\height}{ht}
\DeclareMathOperator{\End}{End}
\newcommand{\KaroubiK}{\mathbb{K}}
\newcommand{\KaroubiL}{\mathbb{L}}
\newcommand{\loopsinf}{\Omega^\infty}
\definecolor{olivegreen}{rgb}{0,0.6,0}
\definecolor{DefColor}{rgb}{0.6,0.15,0.25}
\newcommand{\mdef}[1]{\textcolor{DefColor}{#1}}
\title{Chromatic Purity in Hermitian K-Theory at $p=2$}
\author{Jordan Levin}
\date{March 2024}
\begin{document}
\maketitle

\begin{abstract}
   In this article we investigate the question of chromatic purity of $\LL$-theory. To do so, we utilize the theory of additive $\GW$ and $\LL$-theory in the language of Poincar\'e categories as laid out in the series of papers \cite{CDHI, CDHII, CDHIII, CDHIV} along with the companion paper \cite{HS21}. We apply this theory to chromatically localised $\LL$-theory at the prime $p=2$ and recover the $\LL$-theoretic analogues of chromatic purity for $\Eone$-rings with involution. From this, we deduce that $\LL$-theory does not exhibit chromatic redshift. We deduce the higher chromatic vanishing of quadratic $\LL$-theory of arbitrary idempotent complete categories, thereby allowing the use of Hermitian trace methods to probe chromatic behaviour of $\GW$ and $\LL$-theory. Finally, we show that for $T(n+1)$-acyclic rings with involution, $T(n+1)$-local $\GW$-theory depends only on $T(n+1)$-local $\K$-theory and the associated duality, thereby proving a chromatic analogue of the homotopy limit problem for $\GW$-theory.
\end{abstract}

\tableofcontents

\section{Introduction}
The $\infty$-category of spectra $\Sp$ is the primary object of study in stable homotopy theory. For each prime $p$, the chromatic perspective on spectra seeks to decompose $\Sp_{(p)}$ - those spectra with $\bbZ_{(p)}$-local homotopy groups - into a sequence of subcategories or ``layers'' each of which captures further refined homotopy-theoretic data. Informally, we can speak of a $p$-local spectrum as being of height $n$ if it lives in the $n$th layer of this sequence. For the sake of the reader we shall present the precise definition of chromatic height. For this, fix a prime $p$ and recall that the $i$th telescope at $p$, $T(i)$, is the spectrum $\bbS/(p^{e_0},\ldots, v_{m-1}^{e_{i-1}}) [v_i^{-1}]$. The precise exponents are irrelevant for our treatment and can safely be ignored. By convention $T(0) = H\bbQ$. 
\begin{defn}\label{defn: chromatic_height}
    Let $X$ be a spectrum. We say that $X$ is of \mdef{height $ \leq n$} if $X$ is $T(i)$-acylic for all $i > n$. We say $X$ is of \mdef{height $\geq n $} if $A$ is not $T(n)$-acyclic. We say that $A$ is of \mdef{height exactly $n$} if it both of height $\leq n$ and $\geq n$. 
\end{defn} 
For $A\in \CAlg$, being of height $\leq n$ is equivalent to being $T(n+1)$-acyclic by \cite{Hah22}. Thus the height of $A$ is the smallest $n$ such that $R \otimes T(n) \not \simeq 0$. Because we will often work with $\Eone$-rings, we are required to use Definition \ref{defn: chromatic_height}. 

Ausoni-Rognes \cite{AR02} computed the homotopy of $\bbS/(3,v_1) \otimes \K(\mathrm{ku}_p)$ and demonstrated that $\K(\mathrm{ku}_p)$ is of height 2 whereas $\mathrm{ku}_p$ is of height 1. They conjectured that this phenomenon should be completely general, algebraic $\K$-theory sends height $n$ spectra to height $n+1$ spectra. One weak formulation of this is the following conjecture.  
\begin{conj*}[Chromatic Redshift]\label{conj: chromatic_redshift}
Suppose $A$ is an $\Einf$-ring  of height exactly $n$. Then $\K(A)$ is an $\Einf$-ring of height height exactly $n+1$.
\end{conj*}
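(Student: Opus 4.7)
The strategy is to prove the two height inequalities $\height(\K(A)) \geq n+1$ and $\height(\K(A)) \leq n+1$ separately, as they require essentially disjoint toolkits. The first says some nonzero class survives $T(n+1)$-localization, while the second is a Lichtenbaum--Quillen style vanishing.

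For the lower bound, I would use the cyclotomic trace $\K(A) \to \mathrm{TC}(A)$ together with an analysis of topological Hochschild homology. Concretely, one would construct a nonzero $v_{n+1}$-self-map on $\mathrm{THH}(A)$ after smashing with a suitable Smith--Toda complex, exploiting the height $n$ assumption on $A$ to import a $v_n$-self-map on the coefficients. Via the Nikolaus--Scholze formalism one transfers this to $\mathrm{TC}(A)$, where the associated graded of the motivic filtration is controlled by syntomic cohomology. The last step is to show that the trace detects the resulting class in $T(n+1) \otimes \K(A)$, typically via a Dundas--Goodwillie--McCarthy reduction together with a Hahn--Wilson style argument reducing to square-zero extensions.

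For the upper bound, the plan is chromatic descent. If one can find a sufficiently faithful cover $A \to B$ by an $\Einf$-ring whose $T(n+2)$-local $\K$-theory vanishes, then Clausen--Mathew--Naumann--Noel type descent for chromatically localized $\K$-theory transfers the vanishing to $A$. For the most tractable height $n$ rings -- for example Lubin--Tate spectra or forms of Morava $E$-theory -- such covers exist and this strategy is viable.

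The main obstacle is the upper bound in the stated generality. For an arbitrary $\Einf$-ring of height $n$ one has no canonical ``chromatic hull'' to descend along, and the $T(n+2)$-acyclicity of $\K$ is expected but remains open outside specific families. The $\Einf$-hypothesis is also critical: both the trace approach to detection and the descent approach to vanishing rely on strong commutativity, and correspondingly the $\LL$-theoretic analogue studied in this paper fails to exhibit redshift. For this reason the $\LL$-theoretic methods developed here are unlikely to directly shed light on the $\K$-theoretic conjecture, and any full proof would require a substantially different approach to the Lichtenbaum--Quillen step.
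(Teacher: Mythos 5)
This statement is not proved in the paper at all: it is recalled as background, labelled a conjecture, and its proof is explicitly attributed to the external literature (the paper says it was completed in \cite{BSY22}, building on \cite{CMNN20a}, \cite{LMMT20}, \cite{Yua21}). So there is no in-paper argument to compare yours against, and your proposal should be judged as a standalone sketch. As such it has genuine gaps, and it also misstates the status of the problem in a way that matters for the strategy.

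Concretely: the upper bound $\height \K(A)\leq n+1$ is \emph{not} open outside special families, and it does not require an $\Einf$-structure or a descent along a ``chromatic hull''. It holds for arbitrary $\Eone$-rings of height $\leq n$ by the vanishing results of \cite{CMNN20a} together with the purity theorem of \cite{LMMT20} (the same Theorem \ref{thm: LMMT_purity} that this paper imitates for $\LL$-theory): $T(i)$-locally for $i\geq n+2$, $\K(A)$ only sees a chromatic localisation of $A$, which is zero. So the step you flag as the main obstacle is in fact the settled, formal half. The genuinely hard half is the lower bound, and there your sketch follows the Hahn--Wilson template (produce a $v_{n+1}$-self-map on $\mathrm{THH}$ with coefficients, transfer to $\mathrm{TC}$ via the motivic/syntomic filtration, detect along the trace); this works for specific rings such as truncated Brown--Peterson spectra but gives no handle on an \emph{arbitrary} $\Einf$-ring of height exactly $n$, where one has no control over $\mathrm{THH}$. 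The argument that actually closes the general case, and the place where the $\Einf$-hypothesis is used, is different: the chromatic nullstellensatz of \cite{BSY22} furnishes an $\Einf$-map from $A$ to a Lubin--Tate theory $E_n$, so that $\K(E_n)$ becomes a $\K(A)$-module; if $L_{T(n+1)}\K(A)$ vanished, then $L_{T(n+1)}\K(E_n)$ would be a module over the zero ring and hence zero, contradicting redshift for Lubin--Tate theories established by \cite{Yua21}. Without that detection-by-mapping-out mechanism (or some substitute), your proposal does not yield the stated generality.
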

 The proof of this theorem was finally completed in \cite{BSY22} but the machinery required spans a number of articles. Let us draw particular attention to the works \cite{CMNN20a},\cite{LMMT20}, \cite{Yua21}. 
 
 The redshift conjecture can be further refined to a question about \textit{localising invariants} more generally. Roughly speaking, a localising invariant is a functor $E \from \Catperf \to \Sp$ that sends bifibre sequences in $\Catperf$ to exact sequences of spectra. Examples of localising invariants other than $\K$-theory include $\mathrm{THH}$ and $\mathrm{TC}$. This leads to a natural question.
\begin{quest*}[$E$-heightshift]
    Let $E$ be a localising invariant and suppose $\calC \in \Catperf$ has mapping spectra of height $\leq n$, then what about be said about the height of $E(\calC)$?
\end{quest*}
There is an alternative way one might want to generalise this question. Instead, we could start not with stable $\infty$-categories and localising invariants but rather \textit{Poincar\'e} $\infty$-categories and \textit{Poincar\'e} localising invariants. Whereas stable $\infty$-categories capture the behaviour of $\calD^b(A)$ for an $\Eone$-ring $A$, Poincar\'e $\infty$-categories capture the behaviour of $\calD^b(A)$ equipped with a duality coming from an anti-involution $\sigma \from A \to A^\op$. We recall the relevant definitions from \cite{CDHI}.

 \subsection{Recollections}

\begin{defn}\label{defn: poincare_category}
 A \mdef{Hermitian category} is a pair $(\calC,\Qoppa)$ with $\calC$ a stable $\infty$-category and $\Qoppa \from \calC^\op \to \Sp$ a reduced 2-excisive or \mdef{quadratic} functor. To a Hermitian category $(\calC,\Qoppa)$ we can associate a symmetric bilinear functor $\rmB_\Qoppa \from \calC^\op \times \calC^\op \to \Sp$ called the \mdef{cross effect} or \mdef{bilinear part} \footnote{There is also a linear functor $\Lambda_\Qoppa \from \calC^\op \to \Sp$ which measures the difference between $\Qoppa$ and $\rmB_\Qoppa$ in a precise sense.}. We say a Hermitian category $(\calC,\Qoppa)$ is a \mdef{Poincar\'e category} if for each $X \in \calC$, $\rmB_\Qoppa(X,-) \from \calC^\op \to \Sp$ factors though the spectrum-valued Yoneda embedding so that we have $\rmB_\Qoppa(X,-) \simeq \map_\calC(-,\DQoppa(X))$ for a functor $\DQoppa \from \calC^\op \to \calC$. We require $\DQoppa$ is an equivalence (whose inverse is then necessarily $\DQoppa^\op$). We shall call the $\DQoppa$ the \mdef{duality} associated to $\Qoppa$. 
\end{defn}

\begin{rem}\label{rem: maps_hermitian_poincare_categories}
    Hermitian $\infty$-categories assemble into an $\infty$-category $\Cath$ of which Poincar\'e categories form a non-full subcategory $\Catp$. Explicitly $\Cath$ is the Cartesian unstraightening of the functor $\calC \mapsto \Funq(\calC^\op,\Sp)$. This means that a map of Hermitian categories $(\calC,\Qoppa_\calC) \to (\calD,\Qoppa_\calD)$ is a pair $(f,\eta)$ where $f \from \calC \to \calD$ is an exact functor between stable categories and $\eta \from \Qoppa_\calC \to f^* \Qoppa_\calD \defeq \Qoppa_\calC \circ (f^\op)$ is a natural transformation of functors $\calC^\op \to \Sp$. If $(\calC,\Qoppa_\calC)$ and $(\calD,\Qoppa_\calD)$ are Poincar\'e, then a map of Hermitian categories $(f,\eta)$ induces a map on cross effects $\rmB_{\Qoppa_\calC} \to (f \times f)^*\rmB_{\Qoppa_\calD}$. We obtain, for all $x \in \calC$, a natural transformation $\map_\calC(-,\rmD_{\Qoppa_\calC}(x)) \to \map_\calD(f(-), \rmD_{\Qoppa_\calD}(f^\op(x)))$ and by Yoneda a map in $f \rmD_{\Qoppa_\calC}(x) \to \rmD_{\Qoppa_\calD}f^\op(x)$. This map is natural in $x$ and so we obtain a natural transformation $f \circ \rmD_{\Qoppa_\calC} \to \rmD_{\Qoppa_\calD} \circ f^\op$. Then we say that $(f,\eta)$ is a map of Poincar\'e categories if the aforementioned natural transformation on dualities is an equivalence. Succinctly, maps of Poincar\'e categories are those maps of Hermitian categories which \mdef{preserve the duality}.
\end{rem}

We present some fundamental examples of Poincar\'e categories which will play a central role in this article.
\begin{exam}\label{exam: quadratic_symmetric}
    Let $\calC$ be a stable category with duality $\rmD$, that is $\rmD \from \calC^\op \xrightarrow{\simeq} \calC$ along with higher coherences thereby endowing $\calC$ with the structure of a homotopy fixed point of $\Catperf$ with the $(-)^\op$-action. To the pair $(\calC,\rmD)$ we associate a symmetric bilinear functor via the association $\rmB_\rmD \from (x,y) \mapsto \map_\calC(x, \rmD y)$. From this we build two canonical Poincar\'e categories with this cross effect.
    \begin{enumerate}
        \item $(\calC,\Qoppa_{\rmD}^q)$ with $\Qoppa_{\rmD}^q(x) = \rmB_\rmD(x,x)_{hC_2}$. This is the \mdef{quadratic} Poincar\'e structure associated to $\rmD$.
        \item $(\calC,\Qoppa_{\rmD}^s)$ with $\Qoppa_{\rmD}^s(x) = \rmB_\rmD(x,x)^{hC_2}$. This is the \mdef{symmetric} Poincar\'e structure associated to $\rmD$.
    \end{enumerate}
    $(\calC,\Qoppa_{\rmD}^q)$ and $(\calC,\Qoppa_{\rmD}^s)$ are respectively the initial and terminal Poincar\'e categories underlying stable category $\calC$ and duality $\rmD$. An important special case of these comes from \mdef{$\Eone$-rings with anti-involution}, that is, an $\Eone$-ring $A$ equipped with a ring map $\sigma \from A \xrightarrow{\simeq} A^\op$. In this case we obtain a duality on $\Perf(A)$ via $M \mapsto \map_A(M,A)$ with associated bilinear form  
    \[
    (M,N) \mapsto \map_{A \otimes A}(M \otimes N, A).
    \]
    We shall call a Poincar\'e structure $\Qoppa$ on $\Perf(A)$ \mdef{compatible with $\sigma$} if its duality (equivalently bilinear part) is as above.
\end{exam}

 Returning to our discussion of localising invariants, we may then define a \mdef{Poincar\'e localising invariant} as a functor $E \from \Catp \to \Sp$ sending bifibre sequences in $\Catp$ to exact sequences of spectra. Among those, the most fundamental are Grothendieck-Witt ($\GW$) theory and $\LL$-theory which are both universal in a precise sense - see \cite[Corollary 4.4.2, Theorem 4.4.12]{CDHII}. Heuristically, $\GW$-theory is at least as inaccessible as $\K$-theory and so is difficult to analyse directly. On the other hand $\LL$-theory is more easily understood than both $\K$ and $\GW$-theory.

  \subsection{Main Results}

This article takes inspiration from two papers, \cite{LMMT20} and \cite{Lan22}. The latter demonstrates a lack of redshift for $\LL$-theory of \textit{connective} $\Eone$-rings with anti-involution, see Corollary 15 in \textit{loc. cit.} for a precise statement. Further details can be found in Section \ref{section: chromatic_purity} of this article. The former paper proves a fundamental result concerning \textit{chromatic purity} of algebraic $\K$-theory. Broadly speaking, chromatic purity results for a localising invariant $E$ are those of the form ``for all rings $A$, some chromatic localisation of $E(A)$ depends only on some (possibly different) chromatic localisation of $A$''. Our goal will be to follow the structure of their proof to establish similar results for $\LL$-theory, and derive a number of interesting consequences. The statement of our main theorem is as follows.

\begin{thmx}[Purity]\label{thm: purity}\label{thm: A}
Let $(A,\sigma)$ be an $\Eone$-ring with anti-involution, and let $p=2$. 
\begin{enumerate}
    \item For $n\geq0$, the map $(A, \sigma) \to (\Lnf A, \Lnf \sigma)$ induces an equivalence on $T(n)$-local quadratic $\LL$-theory 
    \item For $n\geq1$, the map $(A, \sigma) \to (\LL_T A, \LL_T\sigma)$ induces an equivalence on $T(n)$-local quadratic $\LL$-theory. Here $T$ denotes $T(1)\oplus\cdots\oplus T(n)$.
\end{enumerate} 
\end{thmx}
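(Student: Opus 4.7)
The plan is to adapt the argument of \cite{LMMT20} from algebraic $\K$-theory of stable $\infty$-categories to quadratic $\LL$-theory of Poincar\'e $\infty$-categories, using the localising invariant framework of \cite{CDHII}. Since the finite localization $\Lnf$ commutes with $(-)^\op$, the Verdier projection $\Perf(A) \to \Perf(\Lnf A)$ upgrades to a map of Poincar\'e categories whose kernel is a Poincar\'e subcategory $(\calC_n,\Qoppa_n)$, where $\calC_n\subset\Perf(A)$ consists of perfect $A$-modules with $\Lnf$-acyclic underlying spectrum and $\Qoppa_n$ is the restriction of the quadratic Poincar\'e structure $\Qoppa^q_\sigma$ compatible with $\sigma$. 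Because quadratic $\LL$-theory is a Poincar\'e localising invariant by \cite{CDHII}, applying it yields a fiber sequence of spectra, reducing statement (1) to proving that the quadratic $\LL$-theory of $(\calC_n,\Qoppa_n)$ is $T(n)$-acyclic.

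Next I would analyse $\calC_n$ via a devissage inside $\Catp$. Following \cite{LMMT20}, the kernel $\calC_n$ is generated, in the appropriate Poincar\'e-categorical sense, by perfect modules built from pieces of chromatic height strictly greater than $n$. The Poincar\'e structure $\Qoppa_n$ restricts to these subcategories, so the problem reduces to proving $T(n)$-acyclicity of quadratic $\LL$-theory on Poincar\'e categories of modules of height exactly $m$, for each $m > n$.

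The main obstacle, and the heart of the argument, is this higher chromatic vanishing. At $p=2$, the quadratic Poincar\'e structure $\Qoppa^q$ computes $\LL$-theory through a homotopy-orbit-style construction on $\K$-theory with the $C_2$-action induced by $\sigma$. The natural input is to combine chromatic purity for $\K$-theory from \cite{LMMT20} with the feature, crucial at $p=2$, that the relevant $C_2$-equivariant construction on high-height $\K$-theory inherits chromatic triviality from its underlying spectrum; the additive $\GW$ and $\LL$-theory framework of the companion paper \cite{HS21}, together with the analysis of \cite{Lan22}, should supply the tools to make this precise. The restriction to $p=2$ is essential here because it is precisely at the prime $2$ that the $C_2$-equivariant quadratic structure simplifies enough for purity to propagate from $\K$-theory to $\LL$-theory.

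Statement (2) follows by the same strategy applied to the Verdier sequence with kernel $\calC_n' \to \Perf(\Lnf A)\to \Perf(\LL_T A)$. Its kernel, consisting of $\Lnf$-local, $T$-acyclic perfect $\Lnf A$-modules, is again controlled by the height-$>n$ vanishing established in the proof of (1), and combining with (1) via the two-out-of-three property completes the proof.
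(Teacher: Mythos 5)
Your outline for part (1) has the right skeleton (localisation sequence plus vanishing of the kernel term), but two of its load-bearing steps fail as stated. First, the sequence $(\calC_{>n}\otimes\Perf(A),\Qoppa^q_\sigma|)\to(\Perf(A),\Qoppa^q_\sigma)\to(\Perf(\Lnf A),\Qoppa^q_{\Lnf\sigma})$ is only a Poincar\'e--\emph{Karoubi} sequence, not a Verdier sequence, and $\LL$-theory is not Karoubi-localising; so applying $\LL$ does not directly produce a fibre sequence. The paper has to pass to the Karoubi-localising variant $\KaroubiL$ and then show $\LTn\LL\simeq\LTn\KaroubiL$, which rests on the cofibre of $\LL\to\KaroubiL$ being $\left(\tau_{<0}\KaroubiK\right)^{\tCtwo}$ together with the $T(n)$-acyclicity of the Tate construction on bounded-above spectra. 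Second, and more seriously, your proposed mechanism for the vanishing of the kernel term does not work: quadratic $\LL$-theory is \emph{not} a homotopy-orbit construction on $\K$-theory, and the fibre sequence $\K_{\hCtwo}\to\GW\to\LL$ combined with $T(n)$-acyclicity of $\K$ only yields $\LTn\GW\simeq\LTn\LL$ --- it does not make either vanish. There is also no devissage of the kernel into pieces of ``height exactly $m>n$''; neither \cite{LMMT20} nor this paper argues that way. The paper's actual route is through additive Hermitian K-theory: the formula $\Omega^{\infty-1}\LL(\calC,\Qoppa)\simeq|\Lspace^\oplus\flatAdd\Qdot(\calC,\Qoppa^{[1]})|$, a filtered-colimit reduction plus the additive Schwede--Shipley theorem to reduce to \emph{connective} rings with $\Lnf$-acyclic endomorphism ring, and then Land's vanishing theorem (via the truncating property of $\LL^q$, reducing to discrete rings); the passage between the quadratic structure and an arbitrary structure compatible with $\sigma$ uses the normal $\LL$-theory formula of Theorem \ref{thm: magic_formula}.

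Part (2) contains a genuine error of identification: the kernel of $\Perf(\Lnf A)\to\Perf(L_T A)$ with $T=T(1)\oplus\cdots\oplus T(n)$ consists of $\Lnf$-local, $T$-acyclic modules, i.e.\ the \emph{rational} ($p$-inverted, height-$0$) part of the chromatic tower, not modules of height $>n$, so it is not ``controlled by the height-$>n$ vanishing established in (1)''. The paper instead proves a Hermitian chromatic fracture square (a Poincar\'e--Karoubi square relating $\Lnf A$, $L_T A$, $A[1/p]$ and $L_T A[1/p]$, Corollary \ref{cor: cocartesian_square_rings_involution}) and kills the bottom row by observing that those $\LL$-spectra are modules over $\LTn\LL(\bbS[1/p],\Qoppa^s_\id)$, which vanishes for $n\geq1$ since $\bbS[1/p]$ is $T(i)$-acyclic for $i\geq1$; the top horizontal equivalence then combines with part (1). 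To repair your argument you would need a separate vanishing statement for $T(n)$-local quadratic $\LL$-theory of categories with $p$-inverted endomorphism spectra, which is exactly what the fracture-square step supplies.
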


In fact, part (2) of this theorem together with \cite[Theorem 1]{Lan22} implies that quadratic $\LL$-theory of \textit{any ring} vanishes $T(n)$-locally for any $n \geq 1$, since we may reduce to the connective case. This allows us to invoke Land's aforementioned result to show that quadratic $\LL$-theory does not exhibit chromatic redshift for all $\Eone$-rings.
\begin{thmx}[No Redshift]\label{thm: B}
     Suppose $n \geq 0$. Let $A$ be an $\Eone$-ring with anti-involution $\sigma$ and let $(\Perf(A),\Qoppa)$ be a Poincar\'e category with duality compatible with $\sigma$. Then if $A$ is $T(n)$-acyclic then $\LL(\Perf(A),\Qoppa)$ is $T(n)$-acyclic.
\end{thmx}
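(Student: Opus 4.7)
The plan is to split the proof into two stages: first reduce from a general $\Qoppa$ compatible with $\sigma$ to the quadratic Poincaré structure $\Qoppa^q$ of Example \ref{exam: quadratic_symmetric}, and second handle the quadratic case via Theorem \ref{thm: A} together with \cite[Theorem 1]{Lan22}. Since $\Qoppa^q$ is initial among Poincaré structures on $\Perf(A)$ compatible with $\sigma$, there is a canonical map of Poincaré categories $(\Perf(A), \Qoppa^q) \to (\Perf(A), \Qoppa)$ which is an equivalence on bilinear parts. Its fiber as a map of quadratic functors is therefore a reduced linear functor $\Lambda \from \Perf(A)^\op \to \Sp$. The crucial observation is that for Poincaré structures compatible with $\sigma$, this $\Lambda$ takes values in $A$-module spectra, which are pointwise $T(n)$-acyclic when $A$ is: for any $A$-module $M$, $M \otimes T(n) \simeq M \otimes_A (A \otimes T(n)) \simeq 0$. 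Extending $\LL$-theory to hermitian inputs and invoking its additivity along fibre sequences of quadratic functors, the sequence $\Lambda \to \Qoppa^q \to \Qoppa$ produces a cofibre sequence of $\LL$-spectra whose third term is assembled from $T(n)$-acyclic $A$-modules and is therefore $T(n)$-acyclic. This reduces the problem to the case $\Qoppa = \Qoppa^q$.

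For the quadratic case I would distinguish $n \geq 1$ and $n = 0$. When $n \geq 1$, as noted in the discussion immediately following Theorem \ref{thm: A}, the combination of Theorem \ref{thm: A}(2) with \cite[Theorem 1]{Lan22} already yields the stronger statement that $T(n)$-local quadratic $\LL$-theory vanishes for \emph{every} $\Eone$-ring with anti-involution, and in particular for our $T(n)$-acyclic $A$. When $n = 0$, Theorem \ref{thm: A}(1) identifies the $T(0)$-local quadratic $\LL$-theory of $(A,\sigma)$ with that of $(L_0^f A, L_0^f \sigma)$, and the hypothesis that $A$ is $T(0)$-acyclic, i.e.\ $A \otimes H\bbQ \simeq 0$, forces $L_0^f A \simeq 0$, so the target vanishes.

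The main obstacle will be executing the reduction step rigorously: one must verify that the linear fibre $\Lambda$ of $\Qoppa^q \to \Qoppa$ genuinely factors through $A$-modules whenever $\Qoppa$ is compatible with $\sigma$, and identify a sufficiently flexible extension of $\LL$-theory to hermitian categories together with the additivity needed to produce the cofibre sequence of $\LL$-spectra above. Both ingredients should be available from the Poincaré category machinery of \cite{CDHI, CDHII, CDHIII, CDHIV}, but locating the correct statements and matching the classification of Poincaré structures with prescribed bilinear part to the module-theoretic interpretation of the linear part is the delicate bookkeeping the argument hinges on.
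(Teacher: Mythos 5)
Your handling of the quadratic case is exactly the paper's: for $n\geq 1$ you combine Theorem \ref{thm: A}(2) with the identification $L_T\tau_{\geq 0}\simeq L_T$ to reduce to the connective case and quote \cite{Lan22}, and for $n=0$ you use Theorem \ref{thm: A}(1) together with $L_0^f A\simeq 0$; both steps are correct and match the paper's proof. The gap is in your reduction from a general $\Qoppa$ compatible with $\sigma$ to $\Qoppa^q$. You correctly observe that the fibre of $\Qoppa^q\to\Qoppa$ is a (shifted) linear functor $\Lambda$ which is representable by an $A$-module and hence takes $T(n)$-acyclic values, but the step ``invoking additivity of $\LL$ along fibre sequences of quadratic functors'' is not a formal property you can appeal to: $\LL$-theory is only defined on Poincar\'e categories, a hermitian category with vanishing bilinear part is not one, and there is no naive exactness of $\Qoppa\mapsto\LL(\calC,\Qoppa)$ from which the cofibre sequence plus the acyclicity of its third term would follow. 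What is true is that the cofibre of $\LL^q(A,M)\to\LL(A,\Qoppa)$ is normal $\LL$-theory (Remark \ref{rem: normal_L_theory}), and the entire content of the reduction is the identification of this cofibre by an explicit formula.

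Concretely, knowing that $\Lambda$ takes $T(n)$-acyclic values does not by itself bound the cofibre term: normal $\LL$-theory is classically a Tate-type (hyperquadratic) construction, and $T(n)$-acyclicity is not preserved by homotopy fixed points or Tate constructions (this is why the paper's Lemma \ref{lem: tate_bounded_above} requires a boundedness hypothesis, and it is the same mechanism that lets $\K$-theory exhibit redshift even though its ``values'' are $A$-modules). The paper closes exactly this step by citing Theorem \ref{thm: magic_formula} (equivalently Corollary \ref{cor: L_quadratic_Tn}, i.e.\ \cite[Theorem 13]{Lan22}), which exhibits $\LL^{\mathrm{nor}}$ as a filtered colimit of equalisers built from $\map_A(D_M(X),X)$ and its $\hCtwo$-orbits --- constructions involving only colimits and finite limits, which visibly preserve $T(n)$-acyclicity. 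So your outline is morally the paper's argument, but the ``main obstacle'' you flag is not bookkeeping: it is a substantive theorem, and your proof is incomplete unless you import that result (or reprove the formula for normal $\LL$-theory).
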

In particular, if $A$ has height $n \geq 0$ then $\LL^s(A,\sigma)$ has height $\leq n$. By this notation we mean $\LL^s(A,\sigma) \defeq \LL(A,\Qoppa_\sigma^s)$. We will use similar notation for the quadratic and Tate Poincar\'e structures.

Theorems \ref{thm: A} and \ref{thm: B} has an important consequence for chromatic vanishing results for general idempotent complete Poincar\'e categories by the following result.
\begin{prop}\label{prop: filtered_colimit_rings_with_involution}
    If $(\calC,\Qoppa)$ be an idempotent complete Poincar\'e category, then $(\calC,\Qoppa)$ is a filtered colimit of Poincar\'e categories associated to rings to involution.
\end{prop}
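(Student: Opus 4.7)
The plan is to first write the underlying stable category as a filtered colimit of finitely generated subcategories stable under the duality, then promote this to a filtered colimit in $\Catp$ by restricting the Poincaré structure, and finally identify each term as coming from a ring with anti-involution. Concretely, let $\{X_\alpha\}_{\alpha \in A}$ enumerate the compact objects of $\calC$, and let $I$ be the filtered poset of finite subsets $S \subseteq A$ closed under duality in the sense that $\alpha \in S$ implies there exists $\beta \in S$ with $X_\beta \simeq \DQoppa(X_\alpha)$. For each $S \in I$, let $\calC_S \subseteq \calC$ be the stable idempotent-complete subcategory generated by $\{X_\alpha\}_{\alpha \in S}$. Each $\calC_S$ is closed under $\DQoppa$ by construction, and $\calC \simeq \colim_{S \in I} \calC_S$ in $\Catpidem$ by a standard compact-generation argument.

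Next, I restrict $\Qoppa$ along each inclusion $\iota_S \from \calC_S \hookrightarrow \calC$ to obtain $\Qoppa_S \defeq \iota_S^* \Qoppa$. The Poincaré condition is preserved for $(\calC_S, \Qoppa_S)$ because the bilinear part of $\Qoppa_S$ is represented by $\DQoppa|_{\calC_S}$, which lands in $\calC_S$ by closure. The inclusions assemble into a cone in $\Catp$, and one checks that $\colim_{S \in I} (\calC_S, \Qoppa_S) \simeq (\calC, \Qoppa)$ using that the forgetful functor $\Catp \to \Catperf$ creates filtered colimits and that restriction of quadratic functors along filtered colimits of fully faithful inclusions behaves as expected (as developed in CDH I).

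Finally, to identify each $(\calC_S, \Qoppa_S)$ with a Poincaré category associated to a ring with anti-involution, set $X_S \defeq \bigoplus_{\alpha \in S} X_\alpha$. Since $S$ is closed under duality, we can choose an equivalence $\phi \from \DQoppa(X_S) \xrightarrow{\simeq} X_S$ permuting the summands. Then $X_S$ is a compact generator of $\calC_S$, so $\calC_S \simeq \Perf(A_S)$ where $A_S \defeq \End_{\calC_S}(X_S)$; composing $\phi$ with the action of $\DQoppa$ on mapping spectra produces an equivalence $\sigma_S \from A_S \xrightarrow{\simeq} A_S^\op$ of $\Eone$-rings, i.e.\ an anti-involution. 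Under $\calC_S \simeq \Perf(A_S)$, the duality $\DQoppa|_{\calC_S}$ transports to the standard duality of $\sigma_S$, so $(\calC_S, \Qoppa_S)$ is the Poincaré category of $(A_S, \sigma_S)$ with some compatible Poincaré structure in the sense of Example~\ref{exam: quadratic_symmetric}.

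The main obstacle lies in the second step: one must justify that the colimit of the $(\calC_S, \Qoppa_S)$ taken in $\Catp$ really reconstructs $(\calC, \Qoppa)$, rather than merely recovering $\calC$ as a stable category. This requires using that filtered colimits in $\Catp$ are computed on underlying stable categories together with the fact that quadratic functors on a colimit are determined by their restrictions to the subcategories, both of which are part of the machinery of CDH I. A secondary subtlety is the coherent choice of $\phi$ in the third step, which amounts to rectifying the homotopy self-duality of $X_S$ to an honest structure; this can always be arranged by further enlarging $S$ if necessary.
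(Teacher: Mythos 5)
Your first two steps (exhausting $\calC$ by finitely generated duality-closed thick subcategories with the restricted Poincar\'e structure, and invoking Schwede--Shipley to write each piece as $\Perf$ of an endomorphism ring) follow the same route as the paper, and are fine modulo citation. The genuine gap is in your last step. Choosing an equivalence $\phi \from \DQoppa(X_S) \xrightarrow{\simeq} X_S$ ``permuting the summands'' and composing with the action of $\DQoppa$ on mapping spectra only produces a map of $\Eone$-rings $A_S \to A_S^\op$; it does not produce the homotopy-coherent $C_2$-equivariant datum that ``anti-involution'' requires, and your assertion that under $\calC_S \simeq \Perf(A_S)$ the duality ``transports to the standard duality of $\sigma_S$'' is precisely the statement that needs proof. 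What is needed is not just an equivalence $X_S \simeq \DQoppa(X_S)$ but a hermitian refinement of it, i.e.\ a point of $\Omega^\infty\rmB_\Qoppa(X_S,X_S)^{hC_2}$ whose underlying map is an equivalence: equivalently, a $C_2$-equivariant identification of $\End(X_S)$ with the module with involution underlying the restricted Poincar\'e structure. Your closing remark that this ``can always be arranged by further enlarging $S$'' names the right direction but gives no argument, and this is where the actual content lies.

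The paper resolves this by a cofinality trick you do not carry out: using idempotent completeness, it replaces the poset of finitely generated duality-closed subcategories by the cofinal subposet of subcategories generated by a single \emph{hyperbolic} object $x_i = y_i \oplus \rmD(y_i)$ (any finite duality-closed generating set is absorbed into such an object, since each generator is a retract of $x_i$). The point is that $x_i$ carries a \emph{canonical} hyperbolic form, which supplies exactly the missing datum: a point of $\Omega^\infty\rmB(x_i,x_i)^{hC_2}$, i.e.\ a $C_2$-equivariant map $S^0 \to \map_\calC(x_i,\rmD(x_i))$, which under Schwede--Shipley becomes a $C_2$-equivariant map $\End(x_i) \to \map_{\End(x_i)}(\End(x_i),\rmD(\End(x_i)))$ identifying with postcomposition by the equivalence $x_i \to \rmD(x_i)$, hence an equivalence onto the module with involution. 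One then concludes by the recognition criterion of \cite[Proposition 3.1.14]{CDHI} that the restricted Poincar\'e structure comes from an anti-involution on $\End(x_i)$. To repair your write-up, replace the ad hoc choice of $\phi$ by this hyperbolic generator together with its canonical form, and invoke the recognition criterion rather than transporting the duality by hand.
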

\begin{proof}
By \cite[Observation 1.2.19]{CDHI} we may write $(\calC,\Qoppa)$ as a filtered colimit over the poset of its full subcategories $\calC_i$ generated by finitely many elements and closed under the duality, where we give each full subcategory the restricted Poincar\'e structure. By idempotent completeness and cofinality, we may take this colimit over the poset $I$ consisting of subcategories $\calC_i$ generated by a single self dual object $x_i$ of the form $x_i = y_i \oplus \rmD(y_i)$ for some $y_i \in \calC$. Here, $x_i$ carries a canonical hyperbolic form. Furthermore, by the Schwede-Shipley theorem $\calC_i \simeq \Perf(\End(x_i))$ via $x  \in \calC_i \mapsto \map_{\calC}(x_i,x)$.  The hyperbolic form yields a point $q \in \Omega^\infty \rmB(x_i,x_i)^{hC_2}$ which by adjunction is a map $S^0 \to \rmB(x_i,x_i)^{hC_2}$, that is to say, a $C_2$-equivariant map of spectra $S^0 \to \rmB(x_i,x_i) = \map_\calC(x_i,D(x_i))$. 

    Under the Schwede-Shipley equivalence, this corresponds to a $C_2$-equivariant map of $\End(x_i)$-modules
    \[
    \End(x_i) \to \map_{\End(x_i)}(\End(x_i),D(\End(x_i))).
    \]
    The right hand side is the module with involution associated to the Poincar\'e category $(\Perf(\End(x_i)),\Qoppa^q)$. The map in question identifies with post composition with the given equivalence $x_i \to D(x_i)$ and is thus an equivalence. By the recognition criterion of \cite[Proposition 3.1.14]{CDHI}, the Poincar\'e structure on $\Perf(\End(x_i))$ comes from an anti-involution $\sigma_{i}$ on $\End(x_i)$.  
\end{proof}

Since $\LL$-theory and the $\LTn$ both commute with filtered colimits, we may leverage Proposition \ref{prop: filtered_colimit_rings_with_involution} to obtain a variety of more general results. First we have Theorem \ref{thm: A}(1) for categories.
\begin{thmx}\label{thm: A_1_categories}\label{thm: C}
    Let $(\calC,\Qoppa)$ be an idempotent complete Poincar\'e category. For $n\geq 0$, the map $(\calC, \sigma) \to (\Lnf \calC, \Lnf \Qoppa)$ in $\Catpidem$ induces an equivalence on $T(n)$-local quadratic $\LL$-theory.
\end{thmx}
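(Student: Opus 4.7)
The plan is to reduce Theorem \ref{thm: A_1_categories} to Theorem \ref{thm: A}(1) via Proposition \ref{prop: filtered_colimit_rings_with_involution}, exploiting that all functors in sight preserve filtered colimits.

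First, by Proposition \ref{prop: filtered_colimit_rings_with_involution} I would write $(\calC, \Qoppa) \simeq \colim_{i \in I}(\Perf(A_i), \Qoppa_i)$ as a filtered colimit in $\Catpidem$, where each $\Qoppa_i$ is a Poincaré structure compatible with an anti-involution $\sigma_i$ on the $\Eone$-ring $A_i$. I would then argue that the endofunctor $\Lnf \from \Catpidem \to \Catpidem$ preserves filtered colimits and identifies the map $(\Perf(A_i), \Qoppa_i) \to (\Lnf \Perf(A_i), \Lnf \Qoppa_i)$ with the map $(\Perf(A_i), \Qoppa_i) \to (\Perf(\Lnf A_i), \Qoppa_{\Lnf \sigma_i})$ that appears in Theorem \ref{thm: A}(1). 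Granting this, the map $(\calC, \Qoppa) \to (\Lnf \calC, \Lnf \Qoppa)$ is the filtered colimit of these ring-with-involution maps.

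Next I would apply $\LTn \LL^q(-)$ to the resulting diagram. Since quadratic $\LL$-theory is a Poincaré localising invariant and therefore commutes with filtered colimits in $\Catpidem$, and since $T(n)$-localization of spectra commutes with filtered colimits, the induced map on $T(n)$-local quadratic $\LL$-theory is the filtered colimit of the maps
\[
\LTn \LL^q(\Perf(A_i), \Qoppa_i) \to \LTn \LL^q(\Perf(\Lnf A_i), \Qoppa_{\Lnf \sigma_i}),
\]
each of which is an equivalence by Theorem \ref{thm: A}(1). Thus the colimit map is an equivalence, as required.

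The step I expect to require the most care is the second half of the reduction above: verifying that $\Lnf$ on $\Catpidem$ is continuous and acts on Poincaré structures compatibly with base change of the anti-involution. Continuity should follow from $\Lnf$ being induced by the smashing localization $\Sp \to \Lnf\Sp$ and hence computable pointwise on mapping spectra. The compatibility with the Poincaré structure should then follow from the recognition criterion of \cite[Proposition 3.1.14]{CDHI}: since $\Lnf$ is smashing, the pushforward of a Poincaré structure arising from $\sigma$ is detected by the self-dual-module criterion applied to $\Lnf\sigma$. Once these compatibilities are in place, the argument is a formal colimit manipulation driven entirely by Theorem \ref{thm: A}(1).
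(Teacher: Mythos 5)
Your proposal is correct and follows essentially the same route as the paper: decompose $(\calC,\Qoppa)$ via Proposition \ref{prop: filtered_colimit_rings_with_involution}, apply Theorem \ref{thm: A}(1) termwise, and use that $\LTn\LL^q$ commutes with filtered colimits. The only difference is cosmetic: where you propose to check by hand that $\Lnf$ on $\Catpidem$ is continuous and compatible with the anti-involutions (via smashingness and the recognition criterion), the paper simply cites \cite[Corollary 2.13]{BMCSY24}, which identifies $(\calC,\Qoppa)\mapsto(\Lnf\calC,\Lnf\Qoppa)$ with tensoring by $\Perf(\Lnf\bbS)$.
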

\begin{proof}
    We recall that by \cite[Corollary 2.13]{BMCSY24}, the functor $(\calC,
    \Qoppa) \mapsto (\Lnf\calC,\Lnf\Qoppa)$ is given by tensoring with $\Perf(\Lnf S)$ and all idempotent Poincar\' categories of this form are closed under limits and colimits in $\Catpidem$. This observation, combined with Proposition \ref{prop: filtered_colimit_rings_with_involution} allows us to write  $(\Lnf\calC,\Lnf\Qoppa)$ as a filtered colimit in $\Catpidem$ of Poincar\'e categories associated to rings with involution, each of whose underlying stable category is of the form $\Perf(\Lnf A_i)$. The map
    \[
     \LTn\LL(\calC,
    \Qoppa) \mapsto \LTn \LL (\Lnf\calC,\Lnf\Qoppa)
    \]
    in $\Catpidem$ is induced by maps of the form $ \LTn\LL^q(A, \sigma) \to  \LTn \LL^q(\Lnf A, \Lnf \sigma)$ each of which is an equivalence and so the result follows.
\end{proof}

% As one application, we show how Theorem \ref{thm: A} along with \cite{Lan22} allows us to access chromatically localised $\LL$-theory for a more general class of Poincar\'e structures on $\Perf(A)$.
% \begin{thmx}\label{thm: new_magic_formula}\label{thm: C}
%    Suppose $i \geq 1$. Let $\Qoppa$ be an a Poincar\'e structure on $\Perf(A)$ with associated module with genuine involution $(A,X,\alpha)$, see \cite[Definition 3.2.3]{CDHI}. Assume that $X$ is compact. Then $\LTi \L(A,\Qoppa)$ is equivalent to $\LTi \LL^\mathrm{nor}(A,\Qoppa)$, and is in particular given by the $T(i)$-localised equaliser formula of Theorem \ref{thm: magic_formula}.
% \end{thmx}

Another is the vanishing of higher chromatically localised quadratic $\LL$-theory for \textit{all} idempotent complete Poincar\'e categories.
\begin{thmx}\label{thm: D}
   Suppose $n \geq 1$. Let $\Qoppa$ be an a Poincar\'e structure on $\calC$ where $\calC$ is idempotent complete. Then $ \LL^q(\calC,\Qoppa)$ is $T(n)$-acyclic. 
\end{thmx}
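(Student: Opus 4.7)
The plan is to apply Proposition \ref{prop: filtered_colimit_rings_with_involution} to reduce the statement to the case of Poincar\'e categories arising from $\Eone$-rings with anti-involution, where the desired vanishing is exactly the assertion recorded in the discussion immediately following Theorem \ref{thm: B}.

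Concretely, I would first invoke Proposition \ref{prop: filtered_colimit_rings_with_involution} to present
\[
(\calC, \Qoppa) \simeq \colim_{i \in I} (\Perf(A_i), \Qoppa_{i})
\]
as a filtered colimit in $\Catpidem$, where each $\Qoppa_i$ has duality compatible with an anti-involution $\sigma_i$ on $A_i$. Since $\LL^q$ depends on the Poincar\'e structure only through its underlying duality, and since both $\LL$ (as a Poincar\'e localising invariant) and the functor $\LTn$ commute with filtered colimits, one obtains
\[
\LTn \LL^q(\calC, \Qoppa) \simeq \colim_{i \in I} \LTn \LL^q(A_i, \sigma_i).
\]
By the discussion following Theorem \ref{thm: B}, each term $\LTn \LL^q(A_i, \sigma_i)$ vanishes for $n \geq 1$, so the colimit vanishes as well, which is precisely what we wanted to show.

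The main subtlety is justifying that $\LL^q$ commutes with filtered colimits in $\Catpidem$. For this, I would factor $\LL^q$ as the composition of the forgetful functor to stable $\infty$-categories with duality, the quadratic Poincar\'e structure construction $(\calE, \rmD) \mapsto (\calE, \Qoppa^q_\rmD)$ from Example \ref{exam: quadratic_symmetric}, and the functor $\LL$. The first and third preserve filtered colimits for standard reasons (the forgetful functor $\Catp \to \Catperf$ creates filtered colimits, and $\LL$ is a Poincar\'e localising invariant). The second is left adjoint to the forgetful functor from $\Catp$ to stable categories with duality, since Example \ref{exam: quadratic_symmetric} identifies it as producing the \emph{initial} Poincar\'e structure with a prescribed duality; hence it preserves all colimits and in particular filtered ones.
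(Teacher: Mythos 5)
Your proposal is correct and takes essentially the same route as the paper: decompose $(\calC,\Qoppa)$ via Proposition \ref{prop: filtered_colimit_rings_with_involution}, use that $\LL$-theory, the quadratic construction, and $\LTn$ commute with filtered colimits, and conclude term-wise from the ring-level vanishing of $T(n)$-local quadratic $\LL$-theory (Theorem \ref{thm: A}(2) to reduce to the connective case plus the truncating property and Land's theorem). Two small remarks: the ring-level vanishing you quote is recorded in the paragraph following Theorem \ref{thm: A} rather than after Theorem \ref{thm: B}, and your adjunction argument showing that $(\calC,\rmD)\mapsto(\calC,\Qoppa^q_\rmD)$ preserves filtered colimits is a careful spelling-out of a step the paper's proof leaves implicit.
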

\begin{proof}
   Passing to filtered colimits (and using the notation present in the proof of Proposition \ref{prop: filtered_colimit_rings_with_involution}) we are position to apply Theorem \ref{thm: B}. For $n \geq 1$ we have
    \[
     \LTn \LL^q(\calC,\Qoppa) \simeq \colim_{I} \LTn  \LL^q(\calC_i,\Qoppa|_{\calC_i})) \simeq \colim_{I}  \LL^q(\Perf(\End(x_i),\Qoppa_{\sigma_{i}}))
    \]
    where $\Qoppa_{\sigma_{i}}$ a Poincar\'e structure on $\Perf(\End(x_i))$ compatible with the involution $\sigma_{i}$. The latter vanishes term-wise, first by using Theorem \ref{thm: A} to reduce to the connective case, then by the truncating property of quadratic $\LL$-theory to reduce to the discrete case, and then finally by appealing \cite[Theorem 1]{Lan22} which shows vanishing from the discrete case.
    \end{proof}

The above results tells us that an invariant called \textit{normal $\LL$-theory} controls the behaviour of chromatically localised $\LL$-theory. For a definition, see Remark \ref{rem: normal_L_theory}.  
Given our results, is easy to deduce a version of the \textit{homotopy limit problem} for chromatically localised $\GW$-theory. We provide the proof here for the reader interested only in $\GW$-theory.
\begin{thmx}\label{thm: categorical_homotopy_limit}
    Let $n\geq 0$ and let $(\calC,\Qoppa)$ be a an idempotent complete Poincar\'e category whose underlying category has $T(n+1)$-acyclic endomorphism spectra, then
    \[
 L_{T(n+1)}\GW(\calC,\Qoppa) \simeq ( L_{T(n+1)}\K(\calC))^{hC_2}.
     \]
      In particular if $\calC$ is of the form $\Perf(A)$ and the given Poincar\'e structure endows $(\Perf(A),\Qoppa)$ with the structure of a symmetric monoidal Poincar\'e category, then $\GW(A,\Qoppa)$ has height $\leq n+1$.
\end{thmx}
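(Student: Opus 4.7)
The plan is to combine the fundamental fibre sequence of \cite{CDHII},
\[
\K(\calC)_{hC_2} \xrightarrow{\mathrm{hyp}} \GW(\calC, \Qoppa) \to \LL(\calC, \Qoppa),
\]
with generalised Tate vanishing (valid since $n+1 \geq 1$) and the $\LL$-theoretic vanishing already established earlier in this article. Applying $L_{T(n+1)}$ to the cofibre sequence above, Tate vanishing makes $\K(\calC)^{tC_2}$ acyclic, so the norm map identifies $L_{T(n+1)}\K(\calC)_{hC_2} \simeq L_{T(n+1)}\K(\calC)^{hC_2}$; and because Tate vanishing also annihilates $F^{tC_2}$ for $F = \mathrm{fib}(\K(\calC) \to L_{T(n+1)}\K(\calC))$, this further agrees with $(L_{T(n+1)}\K(\calC))^{hC_2}$. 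The desired equivalence therefore reduces to showing $L_{T(n+1)}\LL(\calC, \Qoppa) = 0$.

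For this vanishing I would invoke Proposition \ref{prop: filtered_colimit_rings_with_involution} to write $(\calC, \Qoppa) \simeq \colim_{i \in I} (\Perf(\End(x_i)), \Qoppa_{\sigma_i})$ as a filtered colimit of Poincaré categories coming from $\Eone$-rings with anti-involution. By hypothesis every $\End_\calC(x_i)$ is $T(n+1)$-acyclic and each $\Qoppa_{\sigma_i}$ is compatible with $\sigma_i$, so Theorem \ref{thm: B} applied with parameter $n+1$ shows that every $\LL(\Perf(\End(x_i)), \Qoppa_{\sigma_i})$ is $T(n+1)$-acyclic. Since both $\LL$-theory and $L_{T(n+1)}$ commute with filtered colimits, this forces $L_{T(n+1)}\LL(\calC, \Qoppa) = 0$, completing the main equivalence. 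The main obstacle would have been upgrading the quadratic vanishing of Theorem \ref{thm: D} to arbitrary Poincaré structures, but Theorem \ref{thm: B} is already formulated to cover every compatible Poincaré structure on a ring with involution, which makes the filtered-colimit reduction essentially automatic.

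For the ``in particular'' clause, the symmetric monoidal Poincaré hypothesis makes $A$ an $\Einf$-ring. Being $T(n+1)$-acyclic then gives $A$ of height $\leq n$ by \cite{Hah22}, and chromatic redshift for $\Einf$-rings upgrades this to $\K(A)$ of height $\leq n+1$. To see that $\GW(A, \Qoppa)$ has height $\leq n+1$ it suffices to verify $L_{T(m)}\GW(A, \Qoppa) = 0$ for every $m \geq n+2$. For any such $m$, the ring $A$ is $T(m)$-acyclic, so the first part of the theorem applies at height $m$ and gives $L_{T(m)}\GW(A, \Qoppa) \simeq (L_{T(m)}\K(A))^{hC_2}$; since $m > n+1 \geq \height(\K(A))$, we have $L_{T(m)}\K(A) = 0$, and hence $(L_{T(m)}\K(A))^{hC_2} = 0$ as required.
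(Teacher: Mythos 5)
Your overall architecture is the paper's: apply $L_{T(n+1)}$ to the fibre sequence $\K(\calC)_{hC_2} \to \GW(\calC,\Qoppa) \to \LL(\calC,\Qoppa)$, identify the localised orbit term with $(L_{T(n+1)}\K(\calC))^{hC_2}$, and kill the $\LL$-term. For the $\LL$-vanishing you route through Proposition \ref{prop: filtered_colimit_rings_with_involution} and Theorem \ref{thm: B}, whereas the paper simply quotes Corollary \ref{cor: mapping_spectra_stable_L_theory}; both are available at this point, and your reduction is sound (the restricted structures are indeed compatible with the anti-involutions $\sigma_i$ by the proof of that proposition, and $\LL$ and $L_{T(n+1)}$ commute with the filtered colimit). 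Your handling of the ``in particular'' clause -- Hahn's theorem to pass from $T(n+1)$-acyclicity to height $\leq n$, the $\K$-theoretic height bound of \cite{LMMT20}, and then the first part of the theorem at every level $m \geq n+2$ -- is also essentially the intended argument.

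The genuine problem is your justification of $L_{T(n+1)}(\K(\calC)_{hC_2}) \simeq (L_{T(n+1)}\K(\calC))^{hC_2}$. You invoke generalised Tate vanishing twice for spectra that are not $T(n+1)$-local: once to claim $L_{T(n+1)}(\K(\calC)^{tC_2}) = 0$ and once to claim $L_{T(n+1)}(F^{tC_2}) = 0$ for the acyclic fibre $F$. Kuhn's theorem and its higher-semiadditive refinements give Tate vanishing for $T(m)$-\emph{local} spectra with finite group action; for arbitrary spectra the statement is false. For example, $\bbS$ with the trivial $C_2$-action has $\bbS^{tC_2} \simeq \bbS^{\wedge}_2$ by the Segal conjecture, whose $T(m)$-localisation is nonzero for every $m \geq 1$; the same example shows that the canonical map $L_{T(m)}(X^{hC_2}) \to (L_{T(m)}X)^{hC_2}$ need not be an equivalence, so your intermediate identification through $L_{T(n+1)}(\K(\calC)^{hC_2})$ is not available. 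The correct order of operations -- and what the paper's appeal to the $\infty$-semiadditivity of $T(n+1)$-local spectra from \cite{CSY20} encodes -- is: homotopy orbits preserve $T(n+1)$-acyclics, so $L_{T(n+1)}(\K(\calC)_{hC_2}) \simeq L_{T(n+1)}\bigl((L_{T(n+1)}\K(\calC))_{hC_2}\bigr)$, and then Tate vanishing applied to the \emph{local} spectrum $L_{T(n+1)}\K(\calC)$ identifies this, via the norm map, with $(L_{T(n+1)}\K(\calC))^{hC_2}$. With that substitution the rest of your proof goes through.
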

\begin{proof}
   The  genuine $C_2$-spectrum $\KR(\calC,\Qoppa)$ of \cite[Corollary 4.5.2]{CDHII} gives rise to a a fibre sequence of the form
   \[
   \K(\calC,\Qoppa)_{hC_2} \to \GW(\calC,\Qoppa) \to \LL(\calC,\Qoppa).
   \]
   Applying $L_{T(n+1)}$, and using the fact that category of $T(n)$-local spectra is $\infty$-semiadditive from \cite{CSY20}, we may rewrite this sequence as 
    \[
   (L_{T(n+1)}\K(\calC,\Qoppa))^{hC_2} \to L_{T(n+1)}\GW(\calC,\Qoppa) \to L_{T(n+1)}\LL(\calC,\Qoppa).
   \]
   By Lemma \ref{cor: mapping_spectra_stable_L_theory}, the latter term vanishes. The final result follows from \cite[Theorem B]{LMMT20} and the existence of an $\Einf$-ring map $(L_{T(n+1)}\K(A))^{hC_2} \to L_{T(n+1)} \K(A)$.
\end{proof}
    From Theorem \ref{thm: categorical_homotopy_limit}, we see that that $T(n+1)$-local $\GW$-theory has the same descent properties as $T(n+1)$-$\K$-theory under the assumptions that mapping spectra are $\Lnf$-local. This recovers \cite[Theorem C]{CMNN20a} and \cite[Theorem 1.1]{BMCSY24} for $\GW$-theory. 
    \begin{thmx}
        Let $n\geq 0$ and let $X$ be a $\pi$-finite $2$-group acting idempotent complete Poincar\'e category $(\calC,\Qoppa)$ then we have
        \[
          L_{T(n+1)}\GW(\lim_X (\calC,\Qoppa))  \simeq \lim_X  L_{T(n+1)}\GW(\calC,\Qoppa).
        \]
        The colimit result follows analogously.
    \end{thmx}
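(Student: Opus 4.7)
The plan is to reduce the desired $\pi$-finite $2$-group descent of $L_{T(n+1)}\GW$ to the known descent of $L_{T(n+1)}\K$ from \cite[Theorem C]{CMNN20a} and \cite[Theorem 1.1]{BMCSY24}, by means of the Karoubi fundamental fibre sequence
\[
\K(\calC,\Qoppa)_{hC_2} \to \GW(\calC,\Qoppa) \to \LL(\calC,\Qoppa)
\]
of \cite[Corollary 4.5.2]{CDHII}, exactly as in the proof of Theorem \ref{thm: categorical_homotopy_limit}.

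First, I would localize this fibre sequence at $T(n+1)$. By $\infty$-semiadditivity of $\Sp_{T(n+1)}$ from \cite{CSY20}, the homotopy orbits term becomes homotopy fixed points, yielding a fibre sequence
\[
(L_{T(n+1)}\K(\calC))^{hC_2} \to L_{T(n+1)}\GW(\calC,\Qoppa) \to L_{T(n+1)}\LL(\calC,\Qoppa)
\]
which is natural in the diagram $X \to \Catpidem$. I would then apply it both to $(\calC,\Qoppa)$ and to $\lim_X(\calC,\Qoppa)$, producing a comparison map of fibre sequences.

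Next, I would verify descent for each outer term separately. For the $\K$-theoretic term, the cited results give $L_{T(n+1)}\K(\lim_X \calC) \simeq \lim_X L_{T(n+1)}\K(\calC)$, and since $(-)^{hC_2}$ preserves arbitrary limits, the term $(L_{T(n+1)}\K(-))^{hC_2}$ inherits descent. For the $\LL$-theoretic term, I would invoke Theorem \ref{thm: D} to obtain outright vanishing in the quadratic case; for a general Poincar\'e structure $\Qoppa$ I would reduce through the normal $\LL$-theory machinery alluded to in Remark \ref{rem: normal_L_theory}, which by Proposition \ref{prop: filtered_colimit_rings_with_involution} and Theorem \ref{thm: A} is controlled by $T(n+1)$-local data on the underlying ring. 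Together these force the middle term, $L_{T(n+1)}\GW$, to satisfy descent as well.

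For the colimit assertion, I would invoke ambidexterity in $\Sp_{T(n+1)}$ (again \cite{CSY20}) together with the fact that $\Catpidem$ is itself semiadditive with respect to $\pi$-finite diagrams, which equates $\pi$-finite colimits with $\pi$-finite limits after localization and reduces the colimit statement to the limit case. The main obstacle I foresee is in the $\LL$-term: securing $\pi$-finite $2$-group descent of $L_{T(n+1)}\LL$ for an arbitrary Poincar\'e structure requires controlling how the comparison to normal $\LL$-theory behaves under both $T(n+1)$-localization and limits of Poincar\'e categories, and is the step that genuinely uses that $X$ is a $2$-group rather than a more general $\pi$-finite space.
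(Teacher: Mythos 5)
Your skeleton is the paper's: the paper proves this exactly by applying Theorem \ref{thm: categorical_homotopy_limit} to both $(\calC,\Qoppa)$ and $\lim_X(\calC,\Qoppa)$, i.e.\ the identification $L_{T(n+1)}\GW(-)\simeq (L_{T(n+1)}\K(-))^{hC_2}$, and then using descent of $L_{T(n+1)}\K(-)$ along $\pi$-finite $2$-groups together with the fact that $(-)^{hC_2}$ commutes with limits; your fibre-sequence manipulation is just an unwinding of the proof of that theorem, and the colimit case is handled the same way from the colimit form of $K$-theoretic descent (you do not need semiadditivity of $\Catpidem$ for this).

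The genuine gap is in your treatment of the $\LL$-term. Theorem \ref{thm: D} only kills the quadratic structure; for a general Poincar\'e structure $L_{T(n+1)}\LL(\calC,\Qoppa)$ simply does not vanish without further hypotheses, and no reduction through normal $\LL$-theory will change that --- the paper's own Theorem \ref{thm: whiteshift} exhibits $\Einf$-rings of height exactly $n+1$ whose Tate $\LL$-theory is $T(n+1)$-locally nontrivial. What makes the statement true is that it tacitly carries the standing hypothesis of Theorem \ref{thm: categorical_homotopy_limit} (see the sentence preceding it in the paper): the endomorphism/mapping spectra of $\calC$ are $T(n+1)$-acyclic (in practice $\Lnf$-local, a condition preserved by the limit $\lim_X\calC$, which you also need in order to apply the identification to the limit category). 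Under that hypothesis the $\LL$-term vanishes directly by Corollary \ref{cor: mapping_spectra_stable_L_theory}, with no appeal to Theorem \ref{thm: D} or to normal $\LL$-theory. Relatedly, you misplace the role of the $2$-group assumption: it is not needed for the $\LL$-term but is the hypothesis required by the $K$-theoretic descent input (\cite[Theorem C]{CMNN20a}, \cite[Theorem 1.1]{BMCSY24}) at the prime $2$. Once these two points are corrected, your argument coincides with the paper's.
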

    \begin{proof}
    The proof is a direct computation using the $\infty$-semiadditivity of the category of $T(n+1)$-local spectra and the fact that $L_{T(n+1)}\K(-)$ satisfies descent with respect to actions of $\pi$-finite spaces.
\[
      L_{T(n+1)}\GW(\lim_X (\calC,\Qoppa)) \simeq (L_{T(n+1)}\K(\lim_X (\calC,\Qoppa)))^{hC_2} \simeq (\lim_X L_{T(n+1)}\K( (\calC,\Qoppa)))^{hC_2} \simeq \lim_X L_{T(n+1)}\GW(\calC,\Qoppa)
    \] 
     and similarly for the colimit.
    \end{proof}
    It would be interesting to know whether $\LTn\LL(-)$ satisfies descent with respect to $\pi$-finite 2-groups in general. This is the subject on ongoing investigation.

    % Corollary \ref{cor: thm_E} cannot on the descent properties of chromatically localised $\LL$-theory since they rest on the the descent properties of chromatically localised $\K$-theory. These are in turn proven using delicate interplay of the blue shifting properties of the Tate and algebraic $\K$-theory itself. We are currently investigating the descent properties of chromatically localised $\LL$-theory at the height of the ring which rests upon a variety of new methods.

We state one last theorem, analogous to chromatic redshift for $\K$-theory of $\Einf$-rings. This theorem rests upon the Hermitian trace methods and real topological Hochschild homology developed in the unpublished work of \cite{HNS22}
\begin{thmx}[Whiteshift for $\LL$-theory]\label{thm: whiteshift}
    Let $n \geq 0$. Let $A$ be an $\Einf$-ring with the trivial anti-involution $\id$. If $A$ is of height exactly $n$ then $\LL^t(A,\id)$ is of height exactly $n$.
\end{thmx}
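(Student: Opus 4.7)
The strategy is to exploit the natural fibre sequence of Poincar\'e structures $\Qoppa^q \to \Qoppa^s \to \Qoppa^t$, which after applying $\LL$ yields a fibre sequence of spectra
\[
    \LL^q(A, \id) \to \LL^s(A, \id) \to \LL^t(A, \id),
\]
and then to chain this with the fundamental $\KR$-fibre sequence $\K(A)_{hC_2} \to \GW^s(A, \id) \to \LL^s(A, \id)$ of \cite[Corollary 4.5.2]{CDHII}.

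The upper bound is immediate. Since $A$ has height exactly $n$, it is $T(i)$-acyclic for all $i > n$. Theorem \ref{thm: B}, applied separately to the quadratic and to the symmetric Poincar\'e structures compatible with $\id$, forces $\LL^q(A, \id)$ and $\LL^s(A, \id)$ to be $T(i)$-acyclic for $i > n$, and the fibre sequence then transports this vanishing to $\LL^t(A, \id)$.

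For the lower bound, I focus on the case $n \geq 1$. Theorem \ref{thm: D} gives $L_{T(n)}\LL^q(A, \id) \simeq 0$, so the fibre sequence of $L$-theories collapses to an equivalence $L_{T(n)}\LL^t(A, \id) \simeq L_{T(n)}\LL^s(A, \id)$. Chromatic redshift for $\K$-theory of $\Einf$-rings \cite{BSY22} gives $L_{T(n)}\K(A) \simeq 0$ (as $\K(A)$ is of height $n+1$), hence $L_{T(n)}\K(A)_{hC_2} \simeq 0$, and the $\KR$-fibre sequence yields $L_{T(n)}\GW^s(A, \id) \simeq L_{T(n)}\LL^s(A, \id)$. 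The task therefore reduces to showing $L_{T(n)}\GW^s(A, \id) \not\simeq 0$.

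This last non-vanishing is the heart of the argument and the principal obstacle. Here one invokes the Hermitian trace methods of \cite{HNS22} to produce a trace map $\GW^s(A, \id) \to \THR(A, \id)^{C_2}$ landing in the genuine $C_2$-fixed points of real topological Hochschild homology, refining the classical Dennis trace $\K \to \THH$ in a way compatible with the Poincar\'e structure. For $\Einf$-rings with trivial involution, $\THR(A, \id)$ is a genuinely $C_2$-equivariant refinement of $\THH(A)$ whose geometric fixed points $\Phi^{C_2}\THR(A, \id)$ can be described in terms of $A$ itself via a Frobenius-type structure map, and therefore retain non-trivial $T(n)$-local content when $A$ has height exactly $n$. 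Following the template of \cite{BSY22}, one then aims to show that the composite $L_{T(n)}A \to L_{T(n)}\GW^s(A, \id) \to L_{T(n)}\Phi^{C_2}\THR(A, \id)$ is non-trivial, forcing $L_{T(n)}\GW^s(A, \id) \not\simeq 0$. The case $n = 0$ should be handled separately, by rational signature-type arguments extracted from classical $L$-theory.
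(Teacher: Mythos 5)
Your lower-bound argument has two genuine gaps, and the fibre sequence it is built on does not exist. First, there is no fibre sequence $\LL^q(A,\id)\to\LL^s(A,\id)\to\LL^t(A,\id)$: the natural maps of Poincar\'e structures with fixed bilinear part run $\Qoppa^q_\id\to\Qoppa^t_\id\to\Qoppa^s_\id$ (classified by the linear parts $0\to A\to A^{\tCtwo}$), and the cofibre of a map of Poincar\'e structures is never again Poincar\'e with the same duality; what is true is that $\cofib\bigl(\LL^q(A,\id)\to\LL(A,\Qoppa)\bigr)$ is normal $\LL$-theory as in Remark \ref{rem: normal_L_theory}. (For the upper bound this error is harmless but also unnecessary: Theorem \ref{thm: B} applies directly to $\Qoppa^t_\id$, whose duality is compatible with $\id$.) Second, your use of redshift is a misreading of ``height exactly $n+1$'': that condition says $\K(A)$ is not $T(n+1)$-acyclic and is $T(i)$-acyclic for $i>n+1$; it does \emph{not} give $L_{T(n)}\K(A)\simeq 0$, and in general $L_{T(n)}\K(A)$ is nonzero for a height-$n$ ring (this is exactly the purity phenomenon of \cite{LMMT20}). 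So the collapse of the $\KR$-sequence to $L_{T(n)}\GW^s(A,\id)\simeq L_{T(n)}\LL^s(A,\id)$ fails. Finally, the step you yourself flag as the heart of the matter, $L_{T(n)}\GW^s(A,\id)\not\simeq 0$, is only sketched as a hoped-for trace argument, and the $n=0$ case is deferred; as written this is a plan, not a proof.

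The paper's argument is much shorter and bypasses $\GW$, redshift, and any non-vanishing of trace maps. By \cite{HNS22} one identifies $A\simeq\Phi^{\Ctwo}\THR(\Perf(A),\Qoppa^t_\id)$, i.e.\ the normal $\LL$-theory of the Tate structure on an $\Einf$-ring with trivial involution is $A$ itself; this yields a ring map $\LL^t(A,\id)\to A$. Hence $A$ is a module over $\LL^t(A,\id)$, so if $\LL^t(A,\id)$ were $T(n)$-acyclic then so would be $A$, contradicting $\height A=n$. Your instinct that $\Phi^{\Ctwo}\THR$ of the trivial involution recovers $A$ is exactly the right ingredient, but it should be fed into the cofibre sequence defining normal $\LL$-theory (where $L_{T(n)}\LL^q$ vanishes by Theorem \ref{thm: D}) rather than routed through $\GW^s$ and the Dennis-type trace.
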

\begin{proof}
    We have proven that $\height \LL^t(A) \leq \height A$ and are thus required to prove the other implication. By  \cite{HNS22}, there is a map of rings $\LL^n(A) \to A$ coming from the fact that they identify $A$ with $\Phi^{C_2}\THR(\Perf(A),\Qoppa_\id^t)$, the $C_2$-geometric fixed points of real topological Hoschschild homology. Thus $\LL_{T(n)}\LL^t(A)$ cannot vanish if $\LL_{T(n)}A$ does not.
\end{proof}
Here $\LL^t(-)$ is refers to the $\LL$-theory of the Tate Poincar\'e structure $\Qoppa^t_\sigma$ which only exists in the $\Einf$-setting, see \cite[Example 3.2.12]{CDHI}. 
\begin{warn}
    The whiteshift theorem cannot hold general Poincar\'e categories. By Theorem \ref{thm: C}, we will see that it almost never holds for the quadratic structure. Even without Theorem \ref{thm: C}, we can make use of the property of bordism invariance for $\LL$-theory to show that we can always find a ring spectrum for which whiteshift fails.  Consider the $\Eone$-ring $A \oplus A^\op$ equipped with the flip involution. The associative symmetric (and indeed also quadratic) Poincar\'e category is then simply $\Hyp(\Perf(A))$. A fundamental result of \cite{CDHII} is that $\LL$-theory of any Poincar\'e category of the form $\Hyp(\calC)$ is $0$. 
\end{warn}

% Next we come to the question of Galois descent for chromatically localised $\LL$-theory. A recent paper of Clausen-Mathew-Naumann-Noel proves the following.
% \begin{thm*}[\cite{CMNN20a}]
%    Let $n \geq 0$, and let $\calC$ be an $\Lnpf$-local idempotent complete stable category with an action of a finite $p$-group $G$. Then 
%    \[
%    L_{T(n+1)}\K(\calC^{hG}) \simeq \left(L_{T(n+1)}\K(\calC)\right)^{hG}
%    \]
% \end{thm*}
% The key lemma used to prove this, \textit{inductive vanishing} \cite[Lemma 4.9]{CMNN20a}, makes use of trace methods for $\K$-theory. From Theorem \ref{thm: C}, we see that chromatically localised $\LL$-theory is given by chromatically localised normal $\LL$-theory which in turn admits a description analogous to that of $\operatorname{TC}$. We suspect that their approach can be modified to prove Galois descent results for chromatically localised $\LL$-theory and subsequently $\GW$-theory at the prime $2$.

\begin{ackn*}
  We would like to thank Yonatan Harpaz and Victor Saunier for their consistent insight and support while writing this article. In addition, we would like to thank Markus Land and Maxime Ramzi for many a fruitful discussion. This work was conducted under the auspicies of the European Research Council as part of the project Foundations of Motivic Real K-Theory (ERC grant no. 949583). 
\end{ackn*}

\begin{convs*}
By category we mean $\infty$-category unless otherwise stated.
For all chromatic localisations, we work exclusively at the prime $p = 2$ unless otherwise stated. In particular $\Lnf$ will denote $L_{n,2}^f$.Generally we will reserve the symbols $\calA$ and $\calB$ for (semi-)additive categories and $\calC$, $\calD$, and $\calE$ for stable categories.
\end{convs*}

\section{Additive Hermitian K-Theory}\label{section: additive_hermitian_poincare}
\subsection{Additive Hermitian and Poincar\'e Categories}
First we recall some standard definitions.

\begin{defn}\label{defn: additive_semiadditive}
    Let $\calA$ be a pointed category admitting finite coproducts. We say that $\calA$ is \mdef{semiadditive} if for each $a,b \in \calA$ the canonical maps
\[
\begin{tikzcd}
a & a \coprod b \arrow[l] \arrow[r] & b
\end{tikzcd}
\]
    exhibit $a \coprod b$ as the product of $a$ and $b$. In this case we write $a \oplus b$ for both the (canonically identified) product and coproduct. We say that $\calA$ is \mdef{additive} if it is semiadditive and every $a \in \calA$ the shear map $a \oplus a \to a \oplus a$ is an equivalence. Finally we say that $\calA$ is \mdef{$\flat$-additive} if it is additive and weakly idempotent complete - every map admitting a retract is an inclusion of a direct summand. The categories of semiadditive, additive, $\flat$-additive will be denoted $\Catsadd, \Catadd, \Cataddflat$ respectively. In these categories, functors are taken to be additive, that is, coproduct (equivalently product) preserving. 
\end{defn}
    There is a vast literature on semiadditive and additive categories, see \cite{GGN15} in particular for a treatment of the $\infty$-categorical case. In this paper they show that $\Catsadd$ (resp. $\Catadd$) is canonically enriched over $\CMon$ (resp. $\CGrp$). There is comparatively little in the literature about $\flat$-additive categories. One notable property is that the category $\flat$-additive categories is is equivalent to the category of weighted stable categories. We discuss Poincar\'e categories in the $\flat$-additive case, where we follow the treatment in \cite{HS21}.
\begin{defn}\label{defn: additive_hermitian_poincare}
    Let $\calA$ be an $\flat$-additive $\infty$-category.  We will say that a reduced functor $\Qoppa \from \calA^\op \to \CGrp$ is \mdef{additive quadratic} if the cross effect $\rmB_\Qoppa \from \calA^\op \times \calA^\op \to \CGrp $ defined by the formula
    \[
    \Qoppa(x \oplus y)  \simeq \Qoppa(x) \oplus \Qoppa(y) \oplus \rmB_\Qoppa(x,y)
    \]
    is \mdef{additive bilinear}, that is, preserves direct sums in each variable separately. We denote by $\Funaq(\calA^\op, \CGrp)$ the full subcategory of $\Fun(\calA^\op, \CGrp)$ consisting of additive quadratic functors. A \mdef{$\flat$-additive Hermitian category} is a pair $(\calA,\Qoppa)$ with $\calA$ $\flat$-additive and $\Qoppa$ an additive quadratic functor. The category of such is the Cartesian unstraightening of the functor
    \[
    (\Cataddflat)^\op \to \Cat, \quad \calA \mapsto \Funaq(\calA^\mathrm{op}, \CGrp).
    \]
    We call a morphism in this category an additive Hermitian functor.
     The category of \mdef{$\flat$-additive Poincar\'e categories} $\Catap$ is the subcategory $\Catah$ consisting of pairs $(\calA, \Qoppa)$ where $\rmB_\Qoppa(x,y) \simeq \Map_\calA(a,\DQoppa(b))$ naturally in $a,b \in \calA$ for an equivalence $\DQoppa \from \calA^\mathrm{op} \to \calA$. We require maps between $\flat$-additive Poincar\'e categories to be duality preserving in the sense of Remark \ref{rem: maps_hermitian_poincare_categories}.
 
\end{defn}
From the above, the data of a Hermitian functor $(\calA,\Qoppa) \to (\calA',\Qoppa')$ is the data of an additive functor $f\from \calA \to \calA'$ and a natural transformation $\eta \from \Qoppa \to f^* \Qoppa' \defeq \Qoppa' \circ f^\op$. As with the stable case we have two basic formulas for $\rmB_\Qoppa$, which hold irrespective of any further assumptions on $(\calA,\Qoppa)$.
    \begin{align*}
           \rmB_\Qoppa(-,-) &\simeq \fib \left (\Qoppa(- \oplus -) \to \Qoppa(\pi_1(-,-)) \oplus \Qoppa(\pi_2(-,-))\right);\\
            \rmB_\Qoppa(-,-) &\simeq \cofib \left(\Qoppa(\pi_1(-,-)) \oplus \Qoppa(\pi_2(-,-)) \to \Qoppa( - \oplus - ) \right).        
    \end{align*}
It follows that limits and colimits in $\Funaq(\calA)$ exist and can be computed in $\Fun(\calA^\op,\CGrp)$. $\Catap$ enjoys essentially the same functoriality properties as $\Catp$ and we will sketch a number of these results in Appendix \ref{appendix: functoriality}.
% \textcolor{olivegreen}{Talk about weight structures! This section needs to be redone. OR JUST OMITTED}
If $(\calC,\Qoppa)$ be a stable Poincar\'e category, then $(\calC,\loopsinf \Qoppa)$ is $\flat$-additive Poincar\'e category.
Motivated by this we supply the following definition:
\begin{defn}\label{defn: underlying_additive}
    Let $(\calC,\Qoppa)$ be a stable Poincar\'e category. We shall call the $\flat$-additive Poincar\'e category $(\calC,\loopsinf\Qoppa)$ the \mdef{underlying $\flat$-additive Poincar\'e category} of $(\C,\Qoppa)$ and will be denoted $\flatAdd(\C,\Qoppa)$.
\end{defn}
There is a construction in the opposite direction.
\begin{defn}
    Let $(\A,\Qoppa)$ be a $\flat$-additive Poicar\'e category. Then we define the \mdef{associated stable Poincar\'e category} $\Stab(\A,\Qoppa)$ to have underlying stable category $\Stab(A)$ with Poincar\'e structure given by the unique quadratic extension of $\Qoppa$ to $\Stab(\calA)$. This quadratic extension comes from \cite[Theorem 2.19]{BGMN22} applied to the case $n=2$.
\end{defn}

We come to the fundamental example of an additive Poincar\'e category. Recall from Proposition \ref{prop: additive_hermitian_schwede-shipley} that a Hermitian structure on $\Proj^\omega(R)$ gives rise to module with $C_2$-action $M$ encoding the bilinear part. The next result gives us a necessarily and sufficient criterion for aforementioned Hermitian category to be Poincar\'e. This is the analogue of \cite[Proposition 3.1.6]{CDHI} with the same proof. Recall the notation of an invertible $A$-module with involution from \cite[Definition 3.1.4]{CDHI}, where the reader can find further discussion.
\begin{defn}
    Let $M \in \Mod_{A \otimes A}$. Viewing $M$ as an $A$-module via the first action, we obtain grouplike $\Einf$-space $\Map_A(M,M)$. This inherits an $A$-action via the residual action on $M$ from which we obtain a map of grouplike $\Einf$-spaces
    \[
    A \mapsto \Map_A(M,M).
    \]
    We say that $M$ is \mdef{invertible} if the above map is an equivalence.
\end{defn}
\begin{cor}\label{prop: additive_poincare_schwede-shipley}
 Suppose $(\Proj^\omega(A), \Qoppa_M)$ is additive Hermitian with underlying biadditive functor $\rmB_{\Qoppa_M}(X,Y) \simeq \Map_{A \otimes A}(X \otimes Y, M)$. Then $(\Proj^\omega(A), \Qoppa_M)$ is Poincar\'e if and only if $M$ is invertible and compact projective and the associated duality is given by $X \mapsto \Map_A(X,M)$.
\end{cor}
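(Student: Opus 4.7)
The plan is to identify the candidate duality explicitly and then verify that the two Poincar\'e conditions reduce exactly to the hypotheses on $M$. First I would rewrite the bilinear part by hom-tensor adjunction: viewing $M$ as a left $A$-module via the first factor, the residual second $A$-action endows $\Map_A(X,M)$ with an $A$-module structure, giving
\[
\rmB_{\Qoppa_M}(X,Y) \simeq \Map_{A \otimes A}(X \otimes Y, M) \simeq \Map_A(Y, \Map_A(X,M)).
\]
Comparing with the prospective identification $\rmB_{\Qoppa_M}(X,Y) \simeq \Map_A(Y, \DQoppa(X))$, the Yoneda lemma forces the only possible duality to be $D_M(X) \defeq \Map_A(X,M)$; consequently, both directions of the equivalence can be phrased in terms of this single functor.

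Next I would observe that being Poincar\'e amounts to requiring $D_M$ to restrict to an equivalence $\Proj^\omega(A)^\op \to \Proj^\omega(A)$. Since $D_M$ is additive and $\Proj^\omega(A)$ is generated from $A$ under finite direct sums and retracts, both the target condition (that $D_M$ actually lands in $\Proj^\omega(A)$) and the condition that $D_M$ is an equivalence reduce to checks at the single object $A$. The former becomes $D_M(A) \simeq M \in \Proj^\omega(A)$, i.e. $M$ is compact projective. The latter says that the double-dual map $A \to D_M D_M^\op(A) \simeq \Map_A(M,M)$ is an equivalence, which is precisely the invertibility of $M$ in the sense of the preceding definition.

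For the converse, assume $(\Proj^\omega(A),\Qoppa_M)$ is Poincar\'e with some duality $\DQoppa$. Applying the Yoneda lemma to
\[
\Map_A(-, \DQoppa(A)) \simeq \rmB_{\Qoppa_M}(A,-) \simeq \Map_A(-,M)
\]
forces $\DQoppa(A) \simeq M$, so $M$ lies in $\Proj^\omega(A)$ and is therefore compact projective; the double-duality equivalence $\id \simeq \DQoppa \circ \DQoppa^\op$ evaluated at $A$ then specialises to the invertibility of $M$.

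The argument is largely formal, and I expect the main delicacy to be tracking the two $A$-actions on $M$ throughout: the first is absorbed when forming $\Map_A(X,M)$, while the second furnishes the $A$-module structure on the resulting mapping spectrum, and the interplay of these encodes the $C_2$-symmetry of $\rmB_{\Qoppa_M}$ needed to realise $\DQoppa$ as a genuine duality rather than merely a representing functor. Modulo this bookkeeping, the proof mirrors \cite[Proposition 3.1.6]{CDHI} with compact projective modules replacing perfect ones.
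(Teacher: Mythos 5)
Your proof is correct and follows essentially the same route as the paper's: identify the candidate duality $\rmD_M(X)=\Map_A(X,M)$ via hom-tensor adjunction, use idempotent completeness/additivity to reduce both the "lands in $\Proj^\omega(A)$" condition and the double-duality condition to the generator $A$, and identify the double-dual map at $A$ with the invertibility map $A\to\Map_A(M,M)$. The only cosmetic difference is which variable you represent in, which is immaterial since the bilinear part is symmetric.
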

\begin{proof}
    Fix $Y \in \Proj^\omega(A)$. The functor $X \in \Proj^\omega(A)^\op \mapsto \Map_{A \otimes A}(X \otimes_A Y, M) \in \CGrp$ is represented in $\Mod(A)$ by (the a priori not compact projective) $\Map_A(Y,M)$ with $A$-action induced by the second $A$-action on $M$. By idempotent completeness, for this to be compact projective for all $Y$ it is necessary and sufficient to demand that $\Map_A(A,M) \simeq M$ is compact projective. We denote the functor $N \mapsto \Map_A(N,M)$ by $\rmD_M$. Now if $M$ is compact projective, since $\Proj(R)$ is generated by $R$ it is necessary and sufficient to require that the the canonical map $X \mapsto \rmD_M \rmD_M (X) = \Map_A(\Map_A(X,M),M)$ is an equivalence only in the special case $X = A$. This is an equivalence if and only if $M$ is in addition invertible.
\end{proof}

\subsection{Additive GW and L-theory}

Before defining additive analogues of $\GW$ and $\LL$-theory, we will need to discuss the additive analogous of the constructions which appear in \cite[Section 2.1]{CDHI}. In particular, we need to discuss the space of Poincar\'e  (Hermitian)  objects associated to a $\flat$-additive Poincar\'e (Hermitian) category $(\A,\Qoppa)$. Most of the definitions will go through verbatim.

\begin{defn}\label{defn: hermitian_forms}
    Let $(\A,\Qoppa)$ be a $\flat$-additive Hermitian category. 
         The \mdef{category of Hermitian objects}  in $(\A,\Qoppa)$ is given by the total category of the right fibration 
    \[
    \He(\A,\Qoppa) \defeq \int_{a \in \A} \Qoppa(a) \to \A
    \]
    classifying the functor $\Qoppa$. Thus, an object of $\He(\A,\Qoppa)$ consists of a pair of objects $(a,q) \in \calA \times \Qoppa(a)$. We will denoted by $\Fm(\A,\Qoppa) \subseteq \He(\A,\Qoppa)$ the core subgroup of $\He(\A,\Qoppa)$ and refer to it as the \mdef{space of Hermitian objects}  in $(\A,\Qoppa)$.  Finally, if $(\A,\Qoppa)$ is Poincar\'e, we let $\Pn(\A,\Qoppa)$ denote the \mdef{space of Poincar\'e objects}  in $(\A,\Qoppa)$, defined to be the full subgroupoid of $\Fm(\A,\Qoppa)$ spanned by those objects $(a,q)$ for which the canonical map $q_\# \from a \to \DQoppa (a)$ is an equivalence.  
\end{defn}

\begin{lem}\label{lem: pn_filtered_colimits}
    The functors $\Fm \from \Catah \to \An$ and $\Pn \from \Catap \to \An$ commute with filtered colimits of additive Poincar\'e categories.
\end{lem}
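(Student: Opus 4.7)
The strategy is to directly unravel the definitions of $\Fm$ and $\Pn$ using a concrete description of filtered colimits in $\Catah$. The key input is that the forgetful functor $\Catah \to \Cataddflat$ preserves filtered colimits, since $\Catah$ is the Cartesian unstraightening of $\calA \mapsto \Funaq(\calA^\op, \CGrp)$ and the base-change functors $f^*$ preserve filtered colimits (the biadditivity condition defining $\Funaq$ is a finite limit condition which commutes with filtered colimits computed pointwise in $\CGrp$). Concretely, for a filtered diagram $(\calA_i, \Qoppa_i)$ with colimit $(\calA, \Qoppa)$, we have $\calA \simeq \colim_i \calA_i$ in $\Cataddflat$, and for any object $a \in \calA$ represented by some $(i_0, a_{i_0})$, the universal property of the colimit gives $\Qoppa(a) \simeq \colim_{j \geq i_0} \Qoppa_j(\phi_{i_0, j}(a_{i_0}))$, where $\phi_{i_0, j} \from \calA_{i_0} \to \calA_j$ are the structure maps. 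The same description applies to filtered colimits in $\Catap$, since the Poincar\'e condition (representability of $\rmB_\Qoppa$) passes through filtered colimits of Hermitian categories.

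By definition $\Fm(\calA, \Qoppa) \simeq \colim_{a \in \calA^\simeq} \Qoppa(a)$. Combining the above description with the fact that the core functor $(-)^\simeq \from \Cat \to \An$ preserves filtered colimits (whence $\calA^\simeq \simeq \colim_i \calA_i^\simeq$), one obtains
\[
\Fm(\calA, \Qoppa) \simeq \colim_{(i_0,\, a_{i_0})} \colim_{j \geq i_0} \Qoppa_j(\phi_{i_0, j}(a_{i_0})),
\]
which by a Fubini/cofinality argument (using that the diagonal $\{(j, a_j)\}$ is cofinal among triples $(i_0, a_{i_0}, j \geq i_0)$) rearranges to $\colim_j \colim_{a_j \in \calA_j^\simeq} \Qoppa_j(a_j) \simeq \colim_j \Fm(\calA_j, \Qoppa_j)$, establishing the first claim.

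For $\Pn$, the same computation works with $\Pn \subseteq \Fm$ in place of $\Fm$ throughout. The additional ingredient is that $\Pn$ is the union of components of $\Fm$ defined by the condition that $q_\# \from a \to \DQoppa(a)$ is an equivalence in $\calA$. Since equivalences in filtered colimits of $\infty$-categories are detected at some finite stage of the diagram, any Poincar\'e form $(a, q) \in \Pn(\calA, \Qoppa)$ lifts to a Poincar\'e form $(a_j, q_j) \in \Pn(\calA_j, \Qoppa_j)$ for sufficiently large $j$, yielding $\Pn(\calA, \Qoppa) \simeq \colim_j \Pn(\calA_j, \Qoppa_j)$. The principal technical obstacle is the first step, namely pinning down the pointwise formula for $\Qoppa$ via the Cartesian unstraightening formalism and handling the Fubini rearrangement correctly; once these are in place, the remainder is a formal computation.
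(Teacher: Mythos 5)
Your outline follows the same broad route as the paper (which simply adapts \cite[Proposition 6.1.8]{CDHI}): describe the filtered colimit objectwise, note that objects, forms and the Poincar\'e condition are all detected at a finite stage, and rearrange the double colimit. However, there is a genuine gap at precisely the point where the paper's proof does its real work. Every concrete step you use --- ``any object $a\in\calA$ is represented by some $(i_0,a_{i_0})$'', the pointwise formula $\Qoppa(a)\simeq\colim_{j\geq i_0}\Qoppa_j(\phi_{i_0,j}(a_{i_0}))$, $\calA^\simeq\simeq\colim_i\calA_i^\simeq$, and ``equivalences in filtered colimits are detected at a finite stage'' --- presupposes that the filtered colimit of the underlying additive categories, taken in $\Catadd$ (resp.\ $\Cataddflat$), coincides with the filtered colimit taken in $\Cat$. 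That is not a consequence of the universal property of the colimit or of the unstraightening formalism: a priori the colimit in $\Catadd$ could differ from the colimit of underlying $\infty$-categories, in which case none of the finite-stage detection arguments apply. This is exactly the statement the paper isolates as the key ingredient (``$\Catadd\to\Cat$ preserves filtered colimits'') and proves by factoring $\Catadd\to\Catsadd\simeq\Mod_{\Span_0}(\Catcoprod)\to\Catcoprod\to\Cat$ and then showing, via the adjointability-of-diagonals argument attributed to Ramzi, that $\Catcoprod\to\Cat$ preserves sifted colimits. Your proposal never addresses this, so the central claim is assumed rather than proved.

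Two smaller points in the same vein: the pointwise colimit formula for the Hermitian structure on the colimit requires knowing that the colimit quadratic functor $\colim_i(\phi_i)_!\Qoppa_i$ is computed by left Kan extensions which preserve additive quadratic functors and admit the expected colimit formula (this is Lemma \ref{lem: LKE_additive} and Corollary \ref{cor: catah_limits_colimits} in the appendix, not a formality to be waved through the unstraightening); and your final step for $\Pn$ also uses that filtered colimits in $\Catap$ are computed in $\Catah$, which is Proposition \ref{prop: closure_properties_catap_catah}. Citing those results is fine, but the missing preservation statement for $\Catadd\to\Cat$ is the substantive content of the lemma and must be supplied.
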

\begin{proof}
    The proof is entirely analogous to \cite[Proposition 6.1.8]{CDHI}. The key ingredient of their proof, that $\Catex \to \Cat$ preserves filtered colimits in our case becomes the statement that $\Catadd \to \Cat$ preserves filtered colimits.  To see this, note that $\Catadd \to \Cat$ can be factored as $ \Catadd \to \Catsadd \simeq \Mod_{\Span_0}(\Catcoprod) \to \Catcoprod \to \Cat$. Each functor in this chain preserves filtered colimits. For the first we observe group-like condition on mapping spaces is closed under colimits. For the second we have that $\Mod_{\Span_0}(\Catcoprod) \to \Catcoprod$ admits a right adjoint (and indeed also a left adjoint) \cite{Har20}. Finally for the third we have the general fact that $\Catcoprod \to \Cat$ preserves sifted colimits by the following argument, due to Maxime Ramzi.
    Suppose $\C_{(-)} \from I \to \Catcoprod$ is sifted diagram, with $\C \defeq \colim_I \C_i$ in $\Cat$. It suffices to prove the following three claims:
    \begin{enumerate}
        \item For each $x,y \in \C$, there exists an $i \in I$ along with $x_i,y_i \in \C_i$ such that the canonical map $\C_i \to \calC$ maps $x_i$ and $y_i$ to $x$ and $y$, respectively.
        \item $\calC$ has finite coproducts. 
        \item For each $i \in I$, $\C_i \to \calC$ preserves finite coproducts.
    \end{enumerate}
The proof of the claims are given below:
        \begin{enumerate}
            \item[(1)] We calculate $\C \times \C \simeq \colim_{(i,j) \in I \times I} \C_i \times \C_j \simeq \colim_{i \in I } \C_i \times \C_i$, where the latter equivalence follows from siftedness. $\Delta^0$ is compact projective in $\Cat$, which implies the claim 
            \item[(2)+(3)] Since each $\C_i$ has finite coproducts, each diagonal functor
            $\Delta_i \from \C_i \to \C_i \times \C_i$ has a left adjoint, namely the binary coproduct. In the map of diagrams $\C_i \to \C_i \times \C_i$ induced by the diagonal, each naturality square is left adjointable since each $\C_i \to \C_j$ is coproduct preserving. Hence upon taking colimits, the resulting map $\C \to \colim_{i \in I} (\C_i \times \C_i)$ admits a left adjoint compatible with each $\C_i \to \colim_{i \in I} (\C_i \times \C_i)$. By siftedness,  $\colim_{i \in I} (\C_i \times \C_i)$ identifies with $\C \times \C$ and so $\C$ has finite coproducts and each $\C_i 
            \to \C \times \C$ preserves these adjoints, that is to say each $\C_i 
            \to \C \times \C$ is coproduct preserving.
        \end{enumerate}
\end{proof}

With the definitions of $\flat$-additive Poincar\'e categories at hand, we continue to follow \cite[Appendix A]{HS21} for the corresponding suitable definitions of additive $\GWspace$ and $\Lspace$. 

\begin{defn}\label{defn: additive_GW}
The \mdef{additive $\GW$-space} associated to $(\calA,\Qoppa)$, $\GWspace^\oplus(\calA,\Qoppa)$ is given by
\[
\GWspace^\oplus(\calA,\Qoppa) \defeq \Pn(\calA,\Qoppa)^\gp
\]
\end{defn}

This manifestly induces a functor $\GWspace^\oplus \from \Catap \to \CGrp$. This functor preserves filtered colimits in $\Catap$ because both $\Pn$ and $(-)^\gp$ do. The reader might be concerned that this definition is too naive, given than the stable variant is rather involved. In fact, the the group completion within the category $\Fun^{\oplus}(\Catap,\CMon)$ coincides with the pointwise group completion, a phenomenon which one sees when comparing additive and stable $\K$-theory. More involved is the definition of the additive $\LL$-space functor. For a fixed $\flat$-additive Poincar\'e category $(\A,\Qoppa)$, one has, for each $[n] \in \Delta$, an additive Hermitian $\mathrm{Q}$-construction, denoted $\mathrm{Q}_{n}(\A,\Qoppa)$. As $n$ varies, we obtain a simplicial object in $\Catap$ denoted $\mathrm{Q}_\bullet(\A,\Qoppa)$. For any functor $\mathcal F \from \Catap \to \CGrp$, we then define 
\[
\mathcal F\mathrm{Q}^{(n)}(\A,\Qoppa) \defeq \colim_{([k_1],\ldots,[k_n]) \in \Delta^{\times n}} \mathcal{F} \left( \mathrm{Q}_{k_1} \cdots \mathrm{Q}_{k_n}(\A,\Qoppa) \right).
\]
 In the above formula, the object $\mathcal{F} \left( \mathrm{Q}_{k_1} \cdots \mathrm{Q}_{k_n}(\A,\Qoppa) \right)$ refines to an $n$-fold simplicial object in $\CGrp$, that is, a functor $(\Delta^\op)^{\times n} \to \CGrp$. This colimit amounts to taking the $n$-fold geometric realisation, so that $\mathcal F\mathrm{Q}^{(n)}(\A,\Qoppa) \in \CGrp$.
 
\begin{defn}\label{defn: additive_L}
Let $(\A,\Qoppa)$ be a $\flat$-additive Poincar\'e category and consider $\GWspace^\oplus \mathrm{Q}^{(n)}(\A,\Qoppa)$. We define the \mdef{additive $\LL$-theory space} by
\[
\Lspace^\oplus(\calA,\Qoppa) \defeq \colim_{n \in \bbN} \GWspace^\oplus\mathrm{Q}^{(n)}(\A,\Qoppa).
\]
The above colimit is sequential, induced by the inclusions $\Delta^{\times \, (n-1)} \times \{[0]\} \to \Delta^{\times \, n}$.
\end{defn}

 According to \cite[Proposition A.2.6]{HS21}, additive $\LL$-theory fits into a bifibre sequence analogous to the fundamental fibre sequence of \cite{CDHII}. 
\begin{thm}\label{thm: fundamental_additive_bifibre_sequence}
  The additive $\LL$-theory functor $\Lspace^\oplus$ is the cofibre of the natural map $\Kspace^\oplus(-)_{\mathrm{hC}_2} \to \GWspace^\oplus(-)$, taken in $\Fun(\Catap,\CGrp)$. This is also a fibre sequence.
\end{thm}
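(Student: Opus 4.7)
The plan is to follow \cite[Proposition A.2.6]{HS21} by analysing the additive $\mathrm{Q}$-construction one step at a time, mirroring the proof of the stable fundamental fibre sequence from \cite{CDHII}.

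First, I would construct the natural transformation $\alpha \from \Kspace^\oplus(-)_{\hCtwo} \to \GWspace^\oplus(-)$ via the hyperbolic functor: an object $a \in \calA$ maps to the Poincaré object $(a \oplus \DQoppa a, \mathrm{hyp})$, which carries a canonical $C_2$-action by swapping the two summands. Group completing and passing to homotopy orbits then produces $\alpha$ as a morphism in $\Fun(\Catap,\CGrp)$.

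Second, I would establish the one-step identity $\cofib(\alpha) \simeq |\GWspace^\oplus \Qdot(\calA,\Qoppa)|$. Poincaré objects in $\mathrm{Q}_1(\calA,\Qoppa)$ are essentially pairs consisting of a Poincaré object together with a Lagrangian, and a careful analysis of the face and degeneracy maps identifies the image of $\GWspace^\oplus(\calA,\Qoppa) \to |\GWspace^\oplus\Qdot(\calA,\Qoppa)|$ with the quotient by hyperbolic Poincaré objects, yielding precisely $\cofib(\alpha)$. Iterating this identification inside each level of the nested Q-construction $\mathrm{Q}_{k_1} \cdots \mathrm{Q}_{k_{n-1}}(\calA,\Qoppa)$, and passing to the colimit over $n$, then recovers the defining expression for $\Lspace^\oplus(\calA,\Qoppa)$.

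Finally, that the resulting sequence is a fibre sequence, not merely a cofibre one, is automatic because all three terms are connective spectra: the cofibre sequence in $\CGrp \simeq \Spcn$ extends to a cofibre --- hence fibre --- sequence in $\Sp$, and connectivity of $\Kspace^\oplus(-)_{\hCtwo}$ ensures that the fibre computed in $\Sp$ lies back in $\CGrp$. The main obstacle is the one-step identification in the second paragraph: in the stable setting this descends from the Waldhausen-style additivity theorem for $\GW$ developed in \cite{CDHII}, but in the additive $\CGrp$-valued setting one must argue directly with the simplicial structure of $\Qdot$ and the $C_2$-action on hyperbolic Poincaré objects, without the benefit of stabilisation and without being able to appeal to a ready-made genuine $C_2$-spectrum refinement.
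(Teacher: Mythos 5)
The paper does not actually prove this statement: it is imported wholesale from \cite[Proposition A.2.6]{HS21}, so the only question is whether your sketch would itself constitute a proof. It would not, because its central step is false. You assert the one-step identity $\cofib(\alpha) \simeq |\GWspace^\oplus\Qdot(\calA,\Qoppa)|$, but a single Hermitian $\mathrm{Q}$-construction followed by realization does not quotient by hyperbolic objects; it produces a \emph{delooping of $\GW$-theory}. This is visible in the paper itself: Lemma \ref{lem: description_additive_GW}, following \cite[Corollary 4.2.3]{CDHII}, identifies $|\GWspace^\oplus \flatAdd\Qdot(\calC,\Qoppa^{[1]})|$ with $\Omega^{\infty-1}\GW(\calC,\Qoppa)$ in the stable case, whose homotopy still carries the full K-theoretic contribution, and which is not $\Omega^{\infty}$ of $\cofib\bigl(\K(\calC)_{\hCtwo} \to \GW(\calC,\Qoppa)\bigr) \simeq \LL(\calC,\Qoppa)$. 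Moreover, if your one-step identity held, the sequential colimit defining $\Lspace^\oplus$ would have to be essentially constant after the first stage, which it is not: the whole point of the definition is that the hyperbolic (K-theoretic) part dies only in the colimit over iterated $\mathrm{Q}$-constructions.

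Consequently the real content of \cite[Proposition A.2.6]{HS21} --- identifying the fibre of $\GWspace^\oplus(-) \to \Lspace^\oplus(-)$ with $\Kspace^\oplus(-)_{\hCtwo}$ --- is not reached by "iterating" your step two; it requires substantive input of additivity/group-completion type for $\GWspace^\oplus$ together with an analysis of the $\Ctwo$-action on additive K-theory, none of which your sketch supplies, and which a levelwise inspection of faces and degeneracies of $\mathrm{Q}_1$ cannot replace. Two smaller remarks: the relevant $\Ctwo$-action for $\alpha$ is the one induced on $\Kspace^\oplus(\calA)$ by the duality $\DQoppa$, against which the hyperbolic functor is equivariant for the trivial action on $\GWspace^\oplus$ --- "swapping the two summands" of $a \oplus \DQoppa(a)$ is only a shadow of this; on the other hand, your closing argument that a cofibre sequence of this shape in $\CGrp$ is automatically a fibre sequence (the cofibre of connective spectra is connective, and the spectrum-level fibre, being $\Kspace^\oplus(-)_{\hCtwo}$, is already connective) is correct.
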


An immediate corollary of this theorem is the following.
\begin{cor}\label{cor: additive_L_colimits}
    Let $(\A_i,\Qoppa_i)$ be filtered diagram of $\flat$-additive Poincar\'e categories admitting a colimit $(\A,\Qoppa)$. Then 
    \[
    \Lspace^\oplus (\A,\Qoppa) \simeq \colim \Lspace^\oplus(\calA_i,\Qoppa_i).
    \]
\end{cor}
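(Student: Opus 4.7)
The plan is to deduce this from the fundamental additive bifibre sequence of Theorem \ref{thm: fundamental_additive_bifibre_sequence}, which exhibits $\Lspace^\oplus$ as the cofibre of $\Kspace^\oplus(-)_{\mathrm{hC}_2} \to \GWspace^\oplus(-)$ in $\Fun(\Catap,\CGrp)$. Filtered colimits in $\CGrp$ commute with finite limits, hence preserve fibre/cofibre sequences, so it suffices to establish the analogous statement for $\Kspace^\oplus$ and $\GWspace^\oplus$ separately; the cofibre will then inherit the property.

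For $\GWspace^\oplus$, I would unfold the definition as $\GWspace^\oplus(\A,\Qoppa) = \Pn(\A,\Qoppa)^{\gp}$. Lemma \ref{lem: pn_filtered_colimits} already tells us that $\Pn \from \Catap \to \An$ commutes with filtered colimits. The symmetric monoidal structure on $\Pn$ coming from direct sum refines this to a statement in $\CMon$, and group completion $(-)^{\gp} \from \CMon \to \CGrp$ is a left adjoint and thus preserves all colimits. Composing these gives the claim for $\GWspace^\oplus$. The analogous statement for $\Kspace^\oplus$ reduces to the fact that the core of an additive category, viewed as a commutative monoid under direct sum, commutes with filtered colimits in $\Cataddflat$; this follows by the same cofinality/forgetful chain $\Catadd \to \Catsadd \to \Catcoprod \to \Cat$ appearing in the proof of Lemma \ref{lem: pn_filtered_colimits}, now applied to the underlying additive category instead of the total category of the right fibration classifying $\Qoppa$. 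After group completion and the colimit-preserving functor $(-)_{\mathrm{hC}_2}$, the left-hand term of the fibre sequence is seen to commute with filtered colimits as well.

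With both outer terms established, the conclusion follows by applying filtered colimits termwise to the fibre sequence and using exactness of filtered colimits in $\CGrp$. I do not anticipate a genuine obstacle here: the main conceptual point is simply that everything in sight (the Grothendieck construction computing $\Pn$, group completion, and homotopy orbits) manifestly commutes with filtered colimits, and the only mildly subtle ingredient, Lemma \ref{lem: pn_filtered_colimits}, has already been established. If anything, the one thing to be careful about is ensuring that the monoidal refinement of $\Pn$ (so that its filtered-colimit-preservation can be composed with group completion) is compatible with the comparison map; but this is automatic from the fact that direct sums of Poincaré objects are computed pointwise along the colimit.
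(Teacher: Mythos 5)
Your argument is correct and follows essentially the same route as the paper: both proofs use the bifibre sequence of Theorem \ref{thm: fundamental_additive_bifibre_sequence} and reduce to the facts that $\Kspace^\oplus$ (hence its homotopy orbits) commutes with filtered colimits and that $\GWspace^\oplus$ does so via Lemma \ref{lem: pn_filtered_colimits} together with pointwise group completion. The extra care you take about the $\CMon$-refinement of $\Pn$ and the exactness of filtered colimits is fine but not a different argument.
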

\begin{proof}
    $\Kspace^\oplus$ commutes with filtered colimits and thus so does its homotopy orbits. $\GWspace^\oplus$ commutes with filtered colimits since $\Pn$ (by Lemma \ref{lem: pn_filtered_colimits}) and pointwise group completion do. Therefore $\Lspace^\oplus$ commutes with filtered colimits.
\end{proof}

We have two key theorems which relate additive to stable $\GW$ and $\LL$-theory.
\begin{thm}{\cite[Theorem B,Appendix A.1.2]{HS21}}
    Let $(\A, \Qoppa)$ be a $\flat$-additive Poincar\'e category. Then there are natural equivalences
    \begin{enumerate}
     \item  $\GWspace^\oplus(\A,\Qoppa) \simeq \GWspace(\Stab(\A,\Qoppa))$;
    \item   $ \Lspace^\oplus(\A,\Qoppa) \simeq \Lspace(\Stab(\A,\Qoppa))$.  
    \end{enumerate}
\end{thm}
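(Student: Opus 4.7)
The plan is to compare the additive and stable Grothendieck-Witt and $\LL$-theory functors through the adjunction between $\Stab$ and the underlying-$\flat$-additive functor $\flatU$, and then reduce the comparison to a combination of the classical additive-to-stable comparison for $\K$-theory together with the fundamental fibre sequences (the additive version is Theorem \ref{thm: fundamental_additive_bifibre_sequence}, while the stable version is \cite[Corollary 4.5.2]{CDHII}).

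First I would construct the natural comparison maps. Since the Poincar\'e structure on $\Stab(\A,\Qoppa)$ is the unique quadratic extension of $\Qoppa$ from \cite[Theorem 2.19]{BGMN22}, it agrees with $\Qoppa$ on the essential image of $\A$ in $\Stab(\A)$ after applying $\loopsinf$. This produces a canonical map $\Pn(\A,\Qoppa) \to \loopsinf \Pn(\Stab(\A,\Qoppa))$, and after group-completing and iterating the Q-construction one obtains natural maps $\GWspace^\oplus(\A,\Qoppa) \to \GWspace(\Stab(\A,\Qoppa))$ and $\Lspace^\oplus(\A,\Qoppa) \to \Lspace(\Stab(\A,\Qoppa))$. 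The goal is to show both are equivalences.

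The strategy is to compare the two fundamental fibre sequences
\[
\Kspace^\oplus(\A)_{hC_2} \to \GWspace^\oplus(\A,\Qoppa) \to \Lspace^\oplus(\A,\Qoppa)
\]
and
\[
\Kspace(\Stab(\A))_{hC_2} \to \GWspace(\Stab(\A,\Qoppa)) \to \Lspace(\Stab(\A,\Qoppa))
\]
and verify that the comparison map is an equivalence on two of the three terms. The $\K$-theoretic comparison $\Kspace^\oplus(\A) \simeq \Kspace(\Stab(\A))$ is the classical Segal-Waldhausen result, since both compute the group-completion of the symmetric monoidal groupoid of objects of $\A$. Two-out-of-three in the fibre sequence would then reduce proving (1) to proving (2), or vice versa. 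To break the circularity, I would prove (2) directly by observing that the iterated Q-construction behaves well under stabilization in the sense that $\Stab(Q_\bullet(\A,\Qoppa)) \simeq Q_\bullet(\Stab(\A,\Qoppa))$ as simplicial objects in $\Catp$. Taking the colimit over $\bbN$ in Definition \ref{defn: additive_L} and its stable analogue, together with the fact that stable $\LL$ can also be described as $\colim_n \GWspace Q^{(n)}$, then yields the comparison in (2); feeding this back into the fibre sequences gives (1).

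The main obstacle is pinning down the compatibility of the Poincar\'e structure under stabilization: we need the quadratic extension of $\Qoppa$ to $\Stab(\A)$ to be such that its connective truncation recovers $\Qoppa$, and we need duality-preservation of the canonical functor $\A \to \Stab(\A)$ (in the sense of Remark \ref{rem: maps_hermitian_poincare_categories}). Once this compatibility, which is essentially the content of the universal property from \cite{BGMN22}, is in place, all the maps in both fibre sequences assemble compatibly and the verification that $Q_\bullet$ commutes with stabilization becomes a levelwise check using the pullback description of the Q-construction on additive Poincar\'e categories. The remaining bookkeeping is routine.
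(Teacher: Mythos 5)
First, a point of orientation: the paper does not prove this statement at all --- it is imported verbatim from Hebestreit--Steimle \cite{HS21}, where it is one of the main theorems, proved via additivity/fibration theorems for the additive hermitian $\mathrm{Q}$-construction and a group-completion argument (this is what makes, e.g., $\GWspace(\Perf(R),\Qoppa^{\mathrm{gq}})$ computable from forms on projective modules). So your proposal should be judged as an independent proof attempt of a genuinely deep result, not compared against an argument in this paper.

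As such an attempt, it has a real gap: the circularity you set out to break is not broken. Your direct proof of (2) writes $\Lspace(\Stab(\A,\Qoppa))$ as $\colim_n \GWspace\,\mathrm{Q}^{(n)}(\Stab(\A,\Qoppa)) \simeq \colim_n \GWspace\,\Stab(\mathrm{Q}^{(n)}(\A,\Qoppa))$ and must then compare this termwise with the defining colimit $\Lspace^\oplus(\A,\Qoppa)=\colim_n \GWspace^\oplus\,\mathrm{Q}^{(n)}(\A,\Qoppa)$. That termwise identification $\GWspace^\oplus(\mathcal B,\Qoppa')\simeq\GWspace(\Stab(\mathcal B,\Qoppa'))$, applied to $\mathcal B=\mathrm{Q}^{(n)}(\A,\Qoppa)$, is exactly statement (1) for these categories --- the very statement you intended to deduce from (2) via the two fibre sequences. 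Note also that one cannot sidestep this by comparing spaces of Poincar\'e objects instead: $\Pn(\A,\Qoppa)\to\Pn(\Stab(\A,\Qoppa))$ is very far from an equivalence (forms on arbitrary objects of the stabilization versus forms on objects of $\A$), and the fact that it becomes one after group completion \emph{is} the hard content, requiring surgery/isotropic-decomposition and additivity input rather than formal bookkeeping. Beyond this central issue, several auxiliary steps you label routine are themselves substantive: that $\Stab$ commutes with the hermitian $\mathrm{Q}$-construction as Poincar\'e categories, that stable $\Lspace$ admits the asserted $\colim_n\GWspace\,\mathrm{Q}^{(n)}$ description, and the additive-to-stable $\K$-theory comparison in the generality of $\flat$-additive $\infty$-categories (available in the literature, but not by the words ``classical Segal--Waldhausen'' alone). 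The skeleton --- compare the two fundamental fibre sequences and use two-out-of-three --- is a fine reduction, but the remaining third term is where the entire difficulty of \cite{HS21} lives, and your proposal does not supply it.
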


% \begin{thm}[{\cite[Theorem B]{HS21}}]
%     Let $(\C, \Qoppa)$ be a stable Poincar\'e category equipped with an exhaustive weight structure. Assume that $\Qoppa(X) \in \Sp$ is connective for each $X \in \C^\heart$ and $\DQoppa$ preserves the heart of $\C$. Then the canonical map
%     \[
%     \Pn^\heart(\calC,\Qoppa)^\gp \to \GWspace(\calC,\Qoppa)
%     \]
%     is an equivalence.
% \end{thm}
% \mdef{Here is a serious problem, what is the relation between additive and stable poincare Q constructions. The additive Q construction has an extra condition to ensure the correct splitting. Actually this is probably okay}
% We present the following description of stable $\GW$-theory with the additive variant we defined earlier. This is akin to the result which expresses the $K$-theory of a stable category $\calC$ in terms of the additive $K$-theory of $\Qdot(\calC)$.
Working backwards, we have formula for the stable $\GW$-space in terms of $\GW^\oplus$.
\begin{lem}\label{lem: description_additive_GW}
    For a stable Poincar\'e category $(\calC,\Qoppa)$, we have the identification of commutative groups 
    \[
    \Omega^{\infty - 1}\GW(\calC,\Qoppa) \simeq |\GWspace^\oplus \flatAdd\Qdot(\calC,\Qoppa^{[1]})|
    \] 
    functorial in $(\calC,\Qoppa)$.
\end{lem}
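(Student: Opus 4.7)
The plan is to combine two facts. First, for every stable Poincar\'e category $(\calD,\Psi)$, the natural map $\GWspace^\oplus \flatU(\calD,\Psi) \to \GWspace(\calD,\Psi)$ is an equivalence. Second, the stable $\mathrm Q$-construction with the shifted Poincar\'e structure delivers the first delooping of the $\GW$-spectrum:
\[
|\GWspace(\Qdot(\calC,\Qoppa^{[1]}))| \simeq \Omega^{\infty-1}\GW(\calC,\Qoppa),
\]
this being the content of the additivity theorem/$\mathrm Q=\Omega B$ in the Poincar\'e setting from \cite{CDHII}. Given these, applying the first fact levelwise to the simplicial Poincar\'e category $\Qdot(\calC,\Qoppa^{[1]})$ and realising yields the desired identification.

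For the first fact I would argue as follows. Because $(\calD,\Psi)$ is stable, the underlying category of $\flatU(\calD,\Psi)=(\calD,\loopsinf\Psi)$ is already stable, so $\Stab(\calD)\simeq\calD$. By the universal property of the quadratic extension from $\flat$-additive to stable Poincar\'e structures (\cite[Theorem 2.19]{BGMN22} in the relevant case, as invoked in the definition of $\Stab$), the unique quadratic extension of $\loopsinf\Psi$ to a spectrum-valued quadratic functor is $\Psi$ itself, so $\Stab\circ\flatU\simeq\mathrm{Id}$ on $\Catp$. Combined with \cite[Theorem B]{HS21}, which gives $\GWspace^\oplus(\A,\Qoppa)\simeq\GWspace(\Stab(\A,\Qoppa))$, this yields the claimed equivalence naturally in $(\calD,\Psi)$.

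Applying this levelwise to the simplicial stable Poincar\'e category $\Qdot(\calC,\Qoppa^{[1]})$, we obtain a natural equivalence of simplicial commutative groups
\[
\GWspace^\oplus\flatU\Qdot(\calC,\Qoppa^{[1]}) \simeq \GWspace(\Qdot(\calC,\Qoppa^{[1]})).
\]
Taking geometric realisation, which commutes with the levelwise equivalence, and combining with the delooping identity above, gives
\[
|\GWspace^\oplus\flatU\Qdot(\calC,\Qoppa^{[1]})| \simeq |\GWspace(\Qdot(\calC,\Qoppa^{[1]}))| \simeq \Omega^{\infty-1}\GW(\calC,\Qoppa).
\]
Functoriality in $(\calC,\Qoppa)$ follows since every ingredient (the $\mathrm Q$-construction, the shift $(-)^{[1]}$, $\flatU$, $\GWspace^\oplus$, and geometric realisation) is functorial. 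The main obstacle is the delooping identity; however, this is not new work here but an appeal to the $\mathrm Q$-construction delooping established in \cite{CDHII}. The only subtlety that genuinely needs checking is that $\Stab\circ\flatU\simeq\mathrm{Id}$ holds with enough naturality to be applied simplicially, which follows directly from the universal property characterising $\Stab$.
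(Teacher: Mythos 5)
Your reduction stands or falls on the ``first fact,'' and that fact is false: for a general stable Poincar\'e category $(\calD,\Psi)$ the map $\GWspace^\oplus\flatU(\calD,\Psi)=\Pn(\calD,\Psi)^\gp\to\GWspace(\calD,\Psi)$ is not an equivalence. Already on $\pi_0$ the target is the group completion of $\pi_0\Pn$ \emph{modulo the Lagrangian relations} (a Poincar\'e object with a Lagrangian $L$ is identified with $\Hyp(L)$), and naive group completion does not impose these. Concretely, for $(\Perf(k),\Qoppa^s)$ the hyperbolic objects $\Hyp(k[m])$, $m\in\bbZ$, give infinitely many independent classes in $\pi_0\bigl(\Pn^\gp\bigr)$ (their underlying objects have distinct graded dimensions, which are additive), whereas $\GW_0(\Perf(k),\Qoppa^s)$ is finitely generated. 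The supporting claim $\Stab\circ\flatU\simeq\Id$ is where this goes wrong: $\Stab$ applied to the underlying $\flat$-additive category of $\calD$ is the stable category \emph{freely} generated by $\calD$ regarded as additive (exact functors out of it correspond to merely additive functors out of $\calD$), which is not $\calD$; and even at the level of Poincar\'e structures the extension is built from $\loopsinf\Psi$, which has discarded the nonconnective information in $\Psi$, so there is no way to recover $\Psi$ from it. The whole content of \cite[Theorem B]{HS21} is that group completion computes $\GW$ of $\Stab(\A,\Qoppa)$ with its specific extended (``genuine''-type) structure; it does not say that group completion computes $\GW$ of an arbitrary stable Poincar\'e category.

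The paper's proof threads exactly this needle: it uses the delooping $\Omega^{\infty-1}\GW(\calC,\Qoppa)\simeq|\Pn\Qdot(\calC,\Qoppa^{[1]})|$ of \cite[Corollary 4.2.3]{CDHII} with $\Pn$ (not $\GWspace$) levelwise, observes that $\Pn$ only depends on the underlying $\flat$-additive Poincar\'e category, and then replaces $\Pn$ by $\Pn^\gp=\GWspace^\oplus$ \emph{inside the realization}: the realization is already grouplike (being $\Omega^{\infty-1}\GW$), and group completion, being a left adjoint, commutes with geometric realization, so levelwise group completion does not change $|{-}|$. So no levelwise comparison with the stable $\GWspace$ is ever needed. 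A secondary issue with your write-up: the ``second fact'' you appeal to, $|\GWspace(\Qdot(\calC,\Qoppa^{[1]}))|\simeq\Omega^{\infty-1}\GW(\calC,\Qoppa)$, is not the cited statement of \cite{CDHII} (which is formulated with $\Pn$); a $\GWspace$-version would need a separate additivity argument and a check of the shift conventions. To repair your proof, drop the levelwise identification with stable $\GWspace$ and run the group-completion step after realization, as above.
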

\begin{proof}
We begin with the formula
\[
 \Omega^{\infty - 1}\GW (\calC,\Qoppa) \simeq |\Pn \Qdot(\calC,\Qoppa^{[1]})|\] 
of \cite[Corollary 4.2.3]{CDHII} which is functorial in $(\calC,\Qoppa)$. The functor $\Pn$ is invariant under taking the underlying $\flat$-additive Poincar\'e category and so we may replace $\Qdot(\calC,\Qoppa^{[1]})$ with $\flatAdd\left(\Qdot(\calC,\Qoppa^{[1]})\right)$. Performing this identification obtain 
\[
 \Omega^{\infty - 1}\GW  (\calC,\Qoppa) \simeq |\Pn \flatAdd \Qdot(\calC,\Qoppa^{[1]})|
\]
 as objects of $\CMon$. On the right hand side we have taking the geometric realization in $\CMon$. The monoid structure on both sides are induced from the semiadditive structure of $\Catp$. The left hand side is group-like, essentially by definition. Thus the same is true of $|\Pn \flatAdd \Qdot(\calC,\Qoppa^{[1]})|$ and so it may be identified with $|\Pn \flatAdd  \Qdot(\calC,\Qoppa^{[1]})|^\gp$. Moreover, since group completion is a left adjoint the right hand side can be further identified with $|\Pn^\gp \flatAdd\Qdot(\calC,\Qoppa^{[1]})|$ where the geometric realization is now taken in $\CGrp$.  But this is precisely $|\GWspace^\oplus \flatAdd \Qdot(\calC,\Qoppa^{[1]})|$ and the lemma follows.
\end{proof}
The statement $\Omega^{\infty - 1}\GW  (\calC,\Qoppa) \simeq |\Pn \flatAdd \Qdot(\calC,\Qoppa^{[1]})|$ remains true if we regard the geometric realization as being taken in spaces since $\CMon \to \Spaces$ creates geometric realizations.
As a corollary of the previous result, we deduce a similar statement for $\LL$-theory. 
\begin{cor}\label{cor: description_additive_L}
    For a stable Poincar\'e category $(\calC,\Qoppa)$, we have the identification of commutative groups
    \[
    \Omega^{\infty-1}\LL (\calC,\Qoppa) \simeq |\Lspace^\oplus \flatAdd \Qdot(\calC,\Qoppa^{[1]})|
    \] 
    functorial in $(\C,\Qoppa)$.
\end{cor}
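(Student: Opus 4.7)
The plan is to deduce this corollary from Lemma \ref{lem: description_additive_GW} by comparing the fundamental fibre sequence relating $\K$, $\GW$, and $\LL$ in both the stable and $\flat$-additive settings.

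I would begin with the natural fibre sequence of spectra
\[
\K(\calC,\Qoppa)_{\hCtwo} \to \GW(\calC,\Qoppa) \to \LL(\calC,\Qoppa)
\]
arising from the genuine $C_2$-spectrum $\KR(\calC,\Qoppa)$ of \cite[Corollary 4.5.2]{CDHII}, and apply $\Omega^{\infty-1}$; since all three terms are connective, the result is also a cofibre sequence in $\CGrp$. In parallel, Theorem \ref{thm: fundamental_additive_bifibre_sequence} presents $\Lspace^\oplus(-)$ as the cofibre of $\Kspace^\oplus(-)_{\hCtwo} \to \GWspace^\oplus(-)$ in $\Fun(\Catap,\CGrp)$. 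I would apply this termwise to the simplicial object $\flatAdd\Qdot(\calC,\Qoppa^{[1]})$ and then take geometric realisation; since $(-)_{\hCtwo}$ commutes with $|{-}|$ (both being colimits) and geometric realisation preserves cofibre sequences as a colimit, this yields a cofibre sequence
\[
\bigl|\Kspace^\oplus\flatAdd\Qdot(\calC,\Qoppa^{[1]})\bigr|_{\hCtwo} \to \bigl|\GWspace^\oplus\flatAdd\Qdot(\calC,\Qoppa^{[1]})\bigr| \to \bigl|\Lspace^\oplus\flatAdd\Qdot(\calC,\Qoppa^{[1]})\bigr|
\]
in $\CGrp$.

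The next step is to identify the two cofibre sequences term by term and conclude by the universal property of cofibres. The middle terms match by Lemma \ref{lem: description_additive_GW}. For the leftmost terms, I would invoke the classical Waldhausen $Q$-construction identification $|\Kspace^\oplus \Qdot(\calC)| \simeq \Omega^{\infty-1}\K(\calC)$, observing that $\Kspace^\oplus$ only sees the underlying $\flat$-additive category and the $C_2$-action induced by the duality; this yields $|\Kspace^\oplus \flatAdd \Qdot(\calC,\Qoppa^{[1]})|_{\hCtwo} \simeq \Omega^{\infty-1}\K(\calC)_{\hCtwo}$ compatibly with the map into the middle term. The identification of the rightmost terms, which is the desired statement, then follows from the universal property of cofibres, with naturality in $(\calC,\Qoppa)$ propagating from each ingredient.

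I expect the main obstacle to be verifying that the $C_2$-action on $|\Kspace^\oplus\flatAdd\Qdot(\calC,\Qoppa^{[1]})|$ coming from the duality on the Hermitian $Q$-construction genuinely matches the $C_2$-action on $\Omega^{\infty-1}\K(\calC)$ induced by $\DQoppa$, and that the resulting equivalence is compatible with both comparison maps into $\Omega^{\infty-1}\GW(\calC,\Qoppa)$. This is essentially a bookkeeping exercise paralleling the one implicit in Lemma \ref{lem: description_additive_GW}, built on the equivariant tracking carried out in \cite[Section 4.5]{CDHII}; since passage to the underlying $\flat$-additive category preserves both the duality and the simplicial structure on $\Qdot$, no genuinely new structural input is required.
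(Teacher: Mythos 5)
Your overall strategy is the same as the paper's: apply $\Omega^{\infty-1}$ to the stable fundamental fibre sequence, compare it with the realization of the additive one from Theorem \ref{thm: fundamental_additive_bifibre_sequence}, match the middle terms by Lemma \ref{lem: description_additive_GW} and the left-hand terms by an equivariant delooping argument, and conclude for the cofibres. However, your justification of the first step is wrong: $\GW(\calC,\Qoppa)$ and $\LL(\calC,\Qoppa)$ are \emph{not} connective in general (their negative homotopy groups agree and are typically nonzero), so you cannot argue that applying $\Omega^{\infty-1}$ yields a cofibre sequence in $\CGrp$ ``since all three terms are connective''. The claim you need is still true, but for a different reason: the fibre term $\K(\calC,\Qoppa)_{hC_2}$ is connective (because $\K$ is), and, as the paper notes, $\GW_0 \to \LL_0$ is surjective; a long-exact-sequence check then shows that applying $\tau_{\geq 0}\Sigma(-)$ to the sequence again produces a cofibre sequence in $\Spcn \simeq \CGrp$. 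This is a repairable misstep rather than a fatal one, but as written the justification does not stand.

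A second, smaller elision: in identifying the left-hand terms you pass silently between $(\Omega^{\infty-1}\K(\calC))_{hC_2}$, the homotopy orbits in $\CGrp$ of the realized additive K-theory, and $\Omega^{\infty-1}\bigl(\K(\calC)_{hC_2}\bigr)$, which is the term actually appearing in the stable sequence. These agree, but again only because $\K(\calC)$ is connective: restricted to $\Spcn$, the functor $\Omega^{\infty-1}$ is suspension followed by the equivalence $\Spcn \simeq \CGrp$ and hence commutes with colimits such as $(-)_{hC_2}$; this exchange is precisely the content of the commuting square with which the paper closes its proof. With these two points made explicit (both hinging on connectivity of $\K$, not of $\GW$ or $\LL$), your argument coincides with the paper's.
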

\begin{proof}
    The main result of \cite{CDHII} implies the existence of a cofibre sequence of spectra,
\[
\K (\calC,\Qoppa)_{\mathrm{hC}_2} \to \GW (\calC,\Qoppa) \to \LL (\calC,\Qoppa).
\]
By the explicit description of the $\LL_0$ in terms of $\GW_0$, this map is a surjection on $\pi_0$. Therefore have a cofibre sequence upon taking $\Omega^\infty$ or indeed $\Omega^{\infty-1}$. Thus we obtain
\[
\Omega^{\infty - 1} \left( \K (\calC,\Qoppa)_{\mathrm{hC}_2}\right) \to \Omega^{\infty-1}\GW (\calC,\Qoppa) \to \Omega^{\infty-1}\LL (\calC,\Qoppa)
\]
is a cofibre sequence in $\CGrp$.
To finish the argument, we consider the diagram of cofibre sequences, the bottom one coming from Theorem \ref{thm: fundamental_additive_bifibre_sequence}.
\[
\begin{tikzcd}
{\Omega^{\infty - 1} \left( \K (\calC,\Qoppa)_{\mathrm{hC}_2}\right) }\arrow[d] \arrow[r] & {\Omega^{\infty-1}\GW (\calC,\Qoppa) } \arrow[d] \arrow[r] & {\Omega^{\infty-1}\LL (\calC,\Qoppa)} \arrow[d] \\
{|\Kspace^\oplus (\flatAdd\Qdot(\calC,\Qoppa^{[1]}))_{\mathrm{hC}_2}|} \arrow[r]                           & {|\GWspace^\oplus  \flatAdd \Qdot(\calC,\Qoppa^{[1]})|} \arrow[r]                        & {|\Lspace^\oplus  \flatAdd\Qdot(\calC,\Qoppa^{[1]})|}.     
\end{tikzcd}
\]
We have already proven that the middle vertical arrow is an equivalence in Lemma \ref{lem: description_additive_GW}. Hence it suffices to show that the left vertical arrow is too. Consider the commutative diagram
\[
\begin{tikzcd}
(\Spcn)^{BC_2} \arrow[d, "(-)_{hC_2}"] \arrow[r, "\Omega^{\infty -1}_*"] & \CGrp^{BC_2} \arrow[d, "(-)_{hC_2}"] \\
\Spcn \arrow[r, "\Omega^{\infty-1}"]                                      & \CGrp                               
\end{tikzcd}.
\]
 Tracing around one direction gives  $\Omega^{\infty-1}\left( \K (\calC,\Qoppa)_{\mathrm{hC}_2}\right)$, while the other gives ${|\Kspace^\oplus (\flatAdd\Qdot(\calC,\Qoppa^{[1]}))_{\mathrm{hC}_2}|}$.
\end{proof}

\section{Applications to Chromatically Localised Hermitian K-theory}\label{section: chromatic_purity}
Let us begin by discussing the ingredients which go into the proof of Theorem \ref{thm: B}, recalled below.

\begin{mythmx}{B}
     Suppose $n \geq 0$. Let $A$ be an $\Eone$-ring with anti-involution $\sigma$ and Let $(\Perf(A),\Qoppa)$ be a Poincar\'e category with duality compatible with $\sigma$. Then if $A$ has height $\leq n$ then $\LL(\Perf(A),\Qoppa)$ has height $\leq n$.
\end{mythmx}

The first major ingredient we use is as yet unpublished work of Harpaz, Nikolaus, and Shah \cite{HNS22} which provides novel foundations for trace methods in Hermitian K-theory, including the following important result.
\begin{thm}\label{thm: magic_formula}
Suppose $A$ is an $\Eone$-ring. Let $\Qoppa$ be a Poincar\'e structure on $\Perf(A)$ with underlying module of involution $M$. Then the cofibre of the canonical map $\LL^q(A,M) \to \LL(A,\Qoppa)$, denoted \mdef{normal $\LL$-theory} of $(\Perf(A),\Qoppa)$, is a filtered colimit of spectra of the form
\[
\Eq\left(\map_A(D_M(X),X) \rightrightarrows (\bbS^{1 - \sigma} \otimes \map_A(D_M(X),X)_\hCtwo\right) 
\]
for $X$ a compact $A$-module.
\end{thm}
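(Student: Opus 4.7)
The plan is to identify the cofibre through the linear part of $\Qoppa$, invoke the real-trace machinery of HNS to convert this into a Tate-type contribution, and then unpack that Tate contribution as the stated equalizer of mapping spectra; the filtered colimit structure will be a formal consequence of compact generation.

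First I would exploit the universal property that among all Poincar\'e structures on $\Perf(A)$ with bilinear part encoded by $M$, the quadratic structure $\Qoppa^q_M$ is initial. Concretely, there is a cofibre sequence of quadratic functors
\[
\Qoppa^q_M \to \Qoppa \to \Lambda_\Qoppa
\]
identifying the cofibre with the \emph{linear part} $\Lambda_\Qoppa$ of $\Qoppa$, a key piece of structure from \cite{CDHI}. Extending $\LL$-theory to this cofibre (either by passing through auxiliary Poincar\'e structures that witness the difference, or by working with the relevant stabilisation), applying $\LL$-theory yields a cofibre sequence whose third term is normal $\LL$-theory. The problem is therefore reduced to understanding the $\LL$-theoretic contribution of $\Lambda_\Qoppa$, which evaluates on a perfect module $X$ to a Tate-type expression in $\rmB_M(X,X)$.

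Next I would feed this into the trace-method framework of HNS. Their main output expresses the relevant $\LL$-theoretic data through $\Phi^{C_2}\THR$ with appropriate coefficients, and the passage from $\Qoppa^q_M$ to $\Qoppa$ modifies the coefficient system precisely by the linear part. Using the duality $D_M$ to identify the bilinear pairing $\rmB_M(X,X)$ with $\map_A(D_M(X),X)$, equipped with its swap $C_2$-action, the Tate contribution rewrites in the $\bbS^{1-\sigma}$-twisted orbit form appearing in the statement: the point is that $\bbS^{1-\sigma}$ encodes the shift between $(-)_{hC_2}$ and $(-)^{hC_2}$ that governs the Tate construction, and the resulting fibre of the norm-type map can equivalently be presented as the equalizer of two maps into $(\bbS^{1-\sigma}\otimes \map_A(D_M(X),X))_{\hCtwo}$, exactly as claimed.

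Finally, the filtered colimit structure follows because $\LL$-theory, $\THR$, and the equalizer formula all commute with filtered colimits, while $\Perf(A)$ is compactly generated by its finite cell modules, so the contributions indexed by $X$ assemble to the cofibre. The main obstacle is the middle step: extracting the precise equalizer presentation requires careful unpacking of the real-equivariant geometry of $\Phi^{C_2}\THR$ and matching the two maps predicted by the real cyclotomic structure to those appearing in the equalizer. The structural ingredients (universality of $\Qoppa^q_M$, HNS's trace formalism, filtered-colimit continuity) are essentially off the shelf, but the explicit identification of the equalizer is the technical heart of the argument and precisely where the novelty of \cite{HNS22} is used.
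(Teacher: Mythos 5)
There is a genuine gap, and it sits exactly where you acknowledge it does. The first step of your outline is essentially definitional: in this paper normal $\LL$-theory \emph{is defined} as the cofibre of $\LL^q(A,M)\to\LL(A,\Qoppa)$, so passing through the decomposition $\Qoppa^q_M\to\Qoppa\to\Lambda_\Qoppa$ adds no content (and one must be careful here anyway: $\Lambda_\Qoppa$ is a linear functor, not a Poincar\'e structure, so one cannot literally ``apply $\LL$-theory to the cofibre''; the dependence of $\LL$ on the quadratic functor is not exact in any naive sense, which is why the normal term is introduced as a cofibre of spectra rather than obtained from a cofibre of structures). The entire mathematical content of the statement is the identification of this cofibre with a filtered colimit of equalizers
\[
\Eq\left(\map_A(D_M(X),X) \rightrightarrows (\bbS^{1-\sigma}\otimes \map_A(D_M(X),X))_{\hCtwo}\right),
\]
including pinning down the two maps (a norm/Tate-diagonal type map and a twist by the sign representation sphere), and your proposal defers precisely this step to the real trace machinery of \cite{HNS22}. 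That is circular as a proof attempt: the theorem being proved \emph{is} the output of that machinery. Indeed, the paper itself offers no proof at all --- it imports the statement verbatim from the unpublished work of Harpaz--Nikolaus--Shah, identifying the normal $\LL$-theory via genuine $C_2$-equivariant structure on $\THR$ and its geometric fixed points --- so there is no argument in the paper your sketch could be matching, and your sketch does not supply one either.

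The filtered-colimit bookkeeping at the end is also not quite as formal as you suggest: one needs to know that the normal $\LL$-theory functor itself commutes with the relevant filtered colimits and that the equalizer presentation is compatible with the cell filtration of a compact $A$-module $X$, which again requires the explicit model for the normal term, not just compact generation of $\Perf(A)$. In short, your outline correctly locates the technical heart but does not engage it; as written it reduces the theorem to the very result it is meant to establish.
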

\begin{rem}\label{rem: normal_L_theory}
    Normal $\LL$-theory can be defined for arbitrary Poincar\'e categories analogously. Given $(\calC,\Qoppa)$ with duality $\rmD$, normal $\LL$-theory is defined to fit into a cofibre sequence 
    \[
    \LL^q(\calC,\rmD) \to \LL(\calC,\Qoppa) \to \LL^\mathrm{nor}(\calC,\Qoppa).
    \]
\end{rem}
From Theorem \ref{thm: magic_formula} if we assume at $A$ is $T(n)$-acyclic, a thick subcategory argument shows that the mapping spectra in the formula above are also $T(n)$-acyclic and hence the equaliser is too. We deduce
\begin{cor}[{\cite[Theorem 13]{Lan22}}]\label{cor: L_quadratic_Tn}
    Let $n \geq 0$. If $A$ is $T(n)$-acyclic, the canonical map $\LL^q(A,M) \to \LL(A,\Qoppa)$ is a $T(n)$-local equivalence.
\end{cor}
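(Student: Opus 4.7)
The plan is to derive Corollary \ref{cor: L_quadratic_Tn} as a direct consequence of Theorem \ref{thm: magic_formula}. By that theorem, the cofibre of $\LL^q(A,M) \to \LL(A,\Qoppa)$ is exhibited as a filtered colimit of equalisers of the form
\[
\Eq\left(\map_A(D_M(X),X) \rightrightarrows \bbS^{1-\sigma} \otimes \map_A(D_M(X),X)_{\hCtwo}\right),
\]
as $X$ ranges over compact $A$-modules. Since $L_{T(n)}$ is exact and commutes with filtered colimits, it suffices to prove that each of these equalisers is $T(n)$-acyclic whenever $A$ is.

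The first step is a thick subcategory argument. The full subcategory of $\Perf(A)$ consisting of modules $X$ such that $\map_A(Y,X)$ is $T(n)$-acyclic for every perfect $Y$ is thick and contains $A$ itself (since $\map_A(A,A) \simeq A$, and more generally $\map_A(Y,A)$ is perfect hence built by finite (co)limits and retracts from $A$, which is $T(n)$-acyclic by hypothesis). Hence all mapping spectra between perfect $A$-modules are $T(n)$-acyclic; in particular $\map_A(D_M(X),X)$ is $T(n)$-acyclic for every compact $X$.

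The second step is a closure argument. The class of $T(n)$-acyclic spectra is closed under finite limits and colimits and arbitrary filtered colimits; in particular it contains the $\hCtwo$ orbits and the smash product with $\bbS^{1-\sigma}$ of any $T(n)$-acyclic spectrum, and it is closed under equalisers. Consequently each term in the filtered colimit of Theorem \ref{thm: magic_formula} is $T(n)$-acyclic, and so is the colimit. Thus the cofibre of $\LL^q(A,M) \to \LL(A,\Qoppa)$ is $T(n)$-acyclic, which is exactly the assertion that this map is a $T(n)$-local equivalence.

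The only non-routine step is the thick subcategory argument, but it is straightforward given that $A$ is $T(n)$-acyclic and $\Perf(A)$ is generated by $A$ under finite colimits and retracts; no genuine obstacle arises, which is why the statement is recorded as a corollary rather than a theorem.
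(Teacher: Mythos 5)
Your proposal is correct and follows essentially the same route as the paper: the paper also deduces the corollary from Theorem \ref{thm: magic_formula} by a thick subcategory argument showing the mapping spectra $\map_A(D_M(X),X)$ are $T(n)$-acyclic, whence each equaliser and the filtered colimit are too. Your write-up merely fills in the closure-property details that the paper leaves implicit.
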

This result tells us that $T(n)$-local $\LL$-theory of rings of $T(n)$-acyclic rings is independent of the linear part of the quadratic structure. We shall use this fact extensively in this article and in future work. Land goes on to prove a no redshift theorem for \textit{connective} rings. He deduces it first for ordinary $\Eone$-rings using geometric methods and then finally for all connective rings using the truncating property of quadratic $\LL$-theory. Theorem \ref{thm: B} is, to our knowledge, the first result for general $\Eone$-rings.

For \textit{commutative} rings with anti-involution, the lack of redshift follows easily from Land's result as follows. There is a map of $\Einf$-rings
\[
\LL^s(\tau_{\geq0}A,\tau_{\geq 0}\sigma) \to \LL^s( A, \sigma).
\]
If $A$ is $T(n)$-acyclic then so is $\tau_{\geq0}A$. This forces $\LL^s(\tau_{\geq 0} A, \sigma)$ to vanish $T(n)$-locally. Since the result is true for symmetric $\LL$-theory, it is true for quadratic $\LL$-theory by Corollary \ref{cor: L_quadratic_Tn} and thus for any other Poincar\'e structure compatible with $\sigma$.

As remarked in the introduction, we will focus on proving chromatic purity for $\LL$-theory, thereby extending the results of \cite{LMMT20} to L-theory. Recall that by purity we mean any result of the form ``a chromatic localisation of $E(A)$ for a localising invariant $E$ and ring $A$ depends only on a (possibly different) chromatic localisation of $A$''. In \cite{LMMT20}, the following theorem is proved
\begin{thm}[{\cite{LMMT20}}]\label{thm: LMMT_purity}
    Let $A$ be a $\Eone$-ring then
    \begin{enumerate}
        \item For $n \geq 1$, the canonical map $A \to \Lnf A$ induces an equivalence on $\LTn \K (-)$ .
        \item For $n \geq 2$, the canonical map $A \to L_{T}$ induces an equivalence on $\LTn \K (-)$  Here $T = T(1) \oplus \cdots\oplus T(n)$.
    \end{enumerate}
\end{thm}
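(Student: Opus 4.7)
The plan is to exploit the fact that $\K$ is a localizing invariant and that the fiber of $A \to \Lnf A$ decomposes chromatically. For part (1), I would start by interpreting the $\K$-theory map as coming from a Verdier sequence of stable $\infty$-categories. Restriction of scalars along $A \to \Lnf A$ induces a fully faithful inclusion $\Perf(\Lnf A) \hookrightarrow \Mod(A)$ whose essential image consists of the $\Lnf$-local perfect $A$-modules, and this fits into a Verdier sequence
\[
\Perf(A)^{\Lnf\text{-ac}} \to \Perf(A) \to \Perf(\Lnf A),
\]
where the left-hand term consists of perfect $A$-modules whose underlying spectrum is $\Lnf$-acyclic. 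Applying $\K$ and then $\LTn$, it suffices to show that $\LTn \K(\Perf(A)^{\Lnf\text{-ac}}) = 0$.

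The second step is to decompose the kernel category chromatically. Every object of $\Perf(A)^{\Lnf\text{-ac}}$ lies in the thick subcategory generated by objects of the form $A \otimes F$ for $F$ a type $\geq n{+}1$ finite spectrum, and continuity of $\K$ together with a standard thick-subcategory argument reduces the claim to showing that $\LTn \K(A \otimes F) = 0$ for each such $F$. For part (2), I would factor $A \to L_T A$ as $A \to \Lnf A \to L_T A$: part (1) handles the first arrow, and the fiber of the second is again built chromatically from monochromatic pieces whose $\LTn \K$-theory we can control. For $n \geq 2$ the comparison between the telescopic and finite localizations on each monochromatic layer is used to see that the discrepancy consists of pieces of height strictly less than $n$, which are killed by $\LTn$.

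The main obstacle is the vanishing $\LTn \K(B) = 0$ for any $\Eone$-ring $B$ that is $T(n)$-acyclic, applied to $B = A \otimes F$ (which is $T(n)$-acyclic since $T(n) \otimes F = 0$ by the type hypothesis). My approach to this would be via the cyclotomic trace: by the Dundas--Goodwillie--McCarthy theorem in its Nikolaus--Scholze refinement, the fiber of $\K \to \mathrm{TC}$ is controlled by nilpotent extensions, and Mathew-style nilpotence arguments would show this fiber is $\LTn$-acyclic on $T(n)$-acyclic inputs. It then remains to show $\LTn \mathrm{TC}(B) = 0$: here $\mathrm{THH}(B)$ inherits $T(n)$-acyclicity from $B$ since $T(n)$ is a ring spectrum and $\mathrm{THH}(B)$ is a $B$-module, and the assembly of $\mathrm{TC}$ from $\mathrm{TC}^{-}$ and $\mathrm{TP}$ preserves this property after $\LTn$. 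Making the nilpotence input on the fiber of the trace fully rigorous without circularity is the hardest part and corresponds to the main technical content of \cite{LMMT20}.
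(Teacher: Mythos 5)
This statement is quoted from \cite{LMMT20}; the paper gives no proof of it, but it does describe (and transpose to $\LL$-theory) the actual LMMT strategy, which goes through \emph{additive} K-theory rather than trace methods. Your top-level skeleton — the Karoubi sequence $\calC_{>n}\otimes\Perf(A)\to\Perf(A)\to\Perf(\Lnf A)$, reduction to vanishing on the fibre term, and a fracture square for part (2) — does match that strategy (compare Lemma \ref{lem: karoubi_sequence} and Corollary \ref{cor: cocartesian_square_rings_involution} here). But your key lemma is misstated in a way that matters: it is \emph{false} that $\LTn\K(B)=0$ for every $T(n)$-acyclic $\Eone$-ring $B$. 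For example $B=H\bbZ$ is $T(1)$-acyclic, yet $L_{T(1)}\K(\bbZ)\neq 0$ (Quillen--Lichtenbaum); more generally such a statement would contradict redshift. The correct hypothesis, and the one LMMT prove, is that the mapping spectra are $\Lnf$-acyclic, i.e.\ acyclic at \emph{all} heights $0,\dots,n$, and the conclusion is vanishing of $\LTi\K$ for $1\le i\le n$. Your intended application $B=A\otimes F$ with $F$ of type $\ge n+1$ does satisfy the stronger hypothesis, but your proposed proof of the vanishing via Dundas--Goodwillie--McCarthy and $\mathrm{TC}$ does not go through: DGM requires connective rings, while $A\otimes F$ is typically nonconnective; and the step ``$\mathrm{TC}^-$ and $\mathrm{TP}$ preserve $T(n)$-acyclicity after localisation'' is unjustified, since homotopy fixed points and Tate constructions do not commute with $T(n)$-localisation — this failure is precisely the mechanism of redshift. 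LMMT avoid trace methods altogether: they reduce by continuity and Schwede--Shipley to $\Proj^\omega(\End(x))$ with connective $\Lnf$-acyclic endomorphism ring and prove the vanishing at the level of group-completion (additive) K-theory; indeed one motivation for their purity theorem is to \emph{enable} chromatic trace-method arguments, so deriving it from $\mathrm{TC}$ risks the circularity you acknowledge.

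For part (2) there is a second gap. After fracturing, the discrepancy between $\Lnf A$ and $L_T A$ is governed by the rationalised terms $A[1/p]$ and $(L_TA)[1/p]$, and these are not covered by the vanishing lemma: they are $T(i)$-acyclic for $i\ge 1$ but certainly not $T(0)$-acyclic, so ``pieces of height strictly less than $n$ are killed by $\LTn$'' is exactly the false principle flagged above (K-theory of a height-$0$ ring can have nontrivial $T(1)$-local part). What is needed is the genuinely nontrivial input that $\LTi\K$ of rational (more generally $\bbZ$-linear) rings vanishes for $i\ge 2$, a Mitchell-type theorem obtained via \cite{CMNN20a}-style descent; this is why part (2) requires $n\ge 2$. (In the $\LL$-theoretic analogue proved in this paper the corresponding step is much easier, since the bottom terms are modules over $\LTn\LL(\bbS[1/p],\Qoppa^s_\id)=0$.) Finally, a minor point you gloss over: $\Perf(A)\to\Perf(\Lnf A)$ is only a Karoubi projection, not a Verdier quotient, so one must either work with nonconnective K-theory or observe that the defect is bounded above and hence invisible $T(n)$-locally.
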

To prove this, the authors deduce properties of chromatically localised algebraic K-theory by making use of additive, rather than stable, K-theory. We will see that a similar proof strategy for L-theory. 

Their work, along with that of many others, has allowed for the use of trace methods to attack problems in chromatically localised algebraic $\K$-theory and resolve long-standing conjectures at the intersection of $\K$-theory and chromatic homotopy theory. We intend to revisit this theme in the context of Hermitian $\K$-theory in future works.

Below is the broad strategy used to prove Theorem \ref{thm: A} and Theorem \ref{thm: B}.
\begin{proof}[Proof Strategy for the ``Purity'' and ``No Redshift'' Theorems]\let\qed\relax
\hfill
    \begin{enumerate}
    \item\label{point1} Using additive $\LL$-theory, we show that any stable category with $\Lnf$-acyclic endomorphism spectra has vanishing $\LTi$-local stable $\LL$-theory for $0 \leq i \leq n$.
    \item\label{point2} A connectivity argument allows us to identify $\LTi \KaroubiL(-)$ with $\LTi\LL(-)$.
    \item\label{point3} We show the existence of a Hermitian chromatic fracture square for rings with involution.
    \item\label{point4} We deduce that $A \to \Lnf A$ induces an equivalence on $\LTi \LL^q (-)$ for rings with involution.
    \item\label{point5} Finally, using the Hermitian chromatic fracture square, we show that $A \to L_{T(1) \oplus \cdots \oplus T(n)} A$ induces an equivalence on $L_{T(1) \oplus \cdots \oplus T(n)} \LL^q(-)$ in a suitable range.
    \item\label{point6} Since  $L_{T(1) \oplus \cdots \oplus T(n)}$ is a periodic localisation, the problem is reduced to the connective case and we conclude from Land's results.
\end{enumerate}
\end{proof}
% \textcolor{olivegreen}{MAYBE REMOVE: We will also record those instances when the fundamental fibre sequence of Hermitian K-theory gives us similar results for $\GW$-theory.}

We begin by establishing Item \ref{point1}, the $\LL$-theoretic analogue of \cite[Proposition 3.6 (i)]{LMMT20}. Recall the following fact from \cite[Section 3.3]{LMMT20} that for stable $\calC$, $\map_\calC(X,X)$ being $\Lnf$-acyclic is equivalent to $\Map_\calC(X,X)$ being $\Lnf$-acyclic. Thus in this case we do not need to distinguish between mapping endomorphism spaces and mapping endomorphism spectra as far as $\Lnf$-acyclicity is concerned. Similarly for $T(i)$-acyclicity for $i \geq 0$. This important but surprisingly subtle point is further discussed in \textit{loc. cit.} 

\begin{prop}\label{prop: mapping_spaces_additive_L_theory}
Let $i \geq 1$. Suppose $(\calA,\Qoppa)$ is an  $\flat$-additive Poincar\'e category. If for each object $x \in \calA$, the endomorphism space $\Map_\calA(x,x)$, considered as connective spectrum, is $\Lnf$-acyclic, then $\Kspace^\oplus(\calA,\Qoppa)_{\hCtwo}$, $\GWspace^\oplus(\calA,\Qoppa)$, and $\Lspace(\calA,\Qoppa)$ are all $T(i)$-acyclic for $1 \leq i \leq n$.
\end{prop}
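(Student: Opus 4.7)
The plan is to adapt the strategy of \cite[Proposition 3.6(i)]{LMMT20}, which establishes the analogous claim for additive $\K$-theory alone, to the Hermitian setting, and then invoke the fundamental fibre sequence of Theorem \ref{thm: fundamental_additive_bifibre_sequence} to deduce the $\LL$-theoretic statement. The first observation is that the hypothesis propagates: since $\calA$ is $\flat$-additive, for any $x, y \in \calA$ the mapping spectrum $\Map_\calA(x, y)$ is a direct summand of $\End_\calA(x \oplus y)$ and so is $\Lnf$-acyclic. In particular the bilinear part $\rmB_\Qoppa(x,x) \simeq \Map_\calA(x, \rmD_\Qoppa(x))$ is $\Lnf$-acyclic, and by chasing the standard decomposition of $\Qoppa$ into its linear and bilinear pieces, the spectrum $\Qoppa(x)$ is $\Lnf$-acyclic for every $x \in \calA$. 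For the first assertion, I apply \cite[Proposition 3.6(i)]{LMMT20} to the underlying category $\calA$ to obtain $T(i)$-acyclicity of $\Kspace^\oplus(\calA)$, and then observe that since $L_{T(i)}$ is a left adjoint and hence preserves colimits, the homotopy orbits $\Kspace^\oplus(\calA,\Qoppa)_{\hCtwo}$ are also $T(i)$-acyclic.

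Next, for $\GWspace^\oplus(\calA, \Qoppa) = \Pn(\calA, \Qoppa)^{\gp}$, I adapt the LMMT argument to the Poincar\'e setting. The space $\Pn(\calA, \Qoppa)$ decomposes as a coproduct, over equivalence classes of Poincar\'e objects $(x,q)$, of classifying spaces $B\mathrm{Aut}_{\Pn}(x,q)$, and the Poincar\'e automorphism group fits into a fibre sequence
\[
\mathrm{Aut}_{\Pn}(x,q) \longrightarrow \mathrm{Aut}_\calA(x) \longrightarrow \Omega^\infty \Qoppa(x)
\]
whose rightmost map is the orbit map at $q$. Both the total space and the base of this sequence have $\Lnf$-acyclic associated suspension spectra by the hypothesis propagation above, so the same Bousfield--Kuhn-type analysis underlying LMMT, now applied to $\mathrm{Aut}_{\Pn}(x,q)$ in place of $\mathrm{Aut}_\calA(x)$, yields $T(i)$-acyclicity of $\Sigma^\infty_+ B\mathrm{Aut}_{\Pn}(x,q)$ for $1 \leq i \leq n$. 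Since $T(i)$-acyclicity is preserved by disjoint unions and by group completion, I conclude that $\GWspace^\oplus(\calA, \Qoppa)$ is $T(i)$-acyclic.

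Finally, the $T(i)$-acyclicity of $\Lspace^\oplus(\calA, \Qoppa)$ is immediate from Theorem \ref{thm: fundamental_additive_bifibre_sequence}: the third term of the fundamental fibre sequence is the cofibre of the first two, both of which have now been shown to be $T(i)$-acyclic. The main obstacle is the adaptation of the LMMT Bousfield--Kuhn style vanishing argument to the Poincar\'e automorphism groups; in particular, checking that their analysis for $\Sigma^\infty_+ B\mathrm{Aut}_\calA(x)$ remains valid after imposing the constraint of preserving a Poincar\'e form. Since the relevant input spectra, namely $\End_\calA(x)$, $\Map_\calA(x, \rmD(x))$, and $\Qoppa(x)$, are all $\Lnf$-acyclic by the hypothesis propagation step, I expect this extension to be fairly direct, essentially a matter of rerunning the same filtration and Adams-type spectral sequence arguments over the Poincar\'e fibre sequence displayed above.
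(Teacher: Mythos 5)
Your K-theoretic step and the final cofibre-sequence step are fine, but the middle of your argument contains a genuine gap, and in fact a false intermediate claim. You assert that the LMMT-style analysis yields $T(i)$-acyclicity of $\Sigma^\infty_+ B\mathrm{Aut}_{\Pn}(x,q)$ for each Poincar\'e object. This is not true, and it is not what \cite{LMMT20} prove even in the K-theoretic case: suspension spectra of classifying spaces of the individual automorphism groups are essentially never $T(i)$-acyclic. For instance, with $\calA = \Proj^\omega(\bbF_2)$ (whose endomorphism spectra are $\Lnf$-acyclic) the groups $\mathrm{Aut}_{\Pn}(x,q)$ are finite orthogonal groups, and $\Sigma^\infty_+ BG$ for a nontrivial finite group $G$ has nonvanishing $T(1)$-homology; the same failure already occurs for $\Sigma^\infty_+ BGL_k(\bbF_2)$ on the K-theory side. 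Acyclicity in \cite[Proposition 3.6(i)]{LMMT20} emerges only \emph{after} group completion, and their proof does not factor through componentwise acyclicity of suspension spectra, so there is no formal ``acyclicity is preserved by disjoint unions and group completion'' step available to you. Moreover, the fibre sequence $\mathrm{Aut}_{\Pn}(x,q) \to \mathrm{Aut}_\calA(x) \to \Omega^\infty\Qoppa(x)$ does not transfer stable acyclicity statements between base, fibre and total space in any formal way, and if you actually tried to rerun the LMMT reduction (to $\pi_0$ of the endomorphism ring and then to discrete rings) for Poincar\'e automorphism groups, at the bottom you would need a vanishing theorem for Grothendieck--Witt/L-theory of discrete torsion rings with involution --- a genuinely Hermitian input that \cite{LMMT20} do not supply. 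This is exactly the content you cannot get ``fairly directly''.

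The paper proves the proposition in the opposite order, which is what makes it work: after reducing to a single generator $X$ via Lemma \ref{lem: addititve_poincare_infty_cat_generated_finite_subcats} and the additive Schwede--Shipley theorem (Proposition \ref{prop: additive_hermitian_schwede-shipley}), it identifies $\Lspace^\oplus$ of the resulting category with $\Omega^\infty\LL(\Map_\calA(X,X),\Stab(\Qoppa))$, i.e.\ with stable L-theory of a \emph{connective} $\Lnf$-acyclic ring with involution, where the $T(i)$-vanishing is supplied by \cite[Corollary 15]{Lan22} (using $p=2$ for $i=1$); the K-orbit term vanishes by \cite[Proposition 3.6(i)]{LMMT20}, and $\GWspace^\oplus$ is then squeezed between two acyclic terms in the fundamental fibre sequence of Theorem \ref{thm: fundamental_additive_bifibre_sequence}. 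So the external Hermitian input (Land's theorem) is unavoidable and your proposal, which tries to prove the $\GW$-statement first and deduce the L-statement as a cofibre, has no source for it. A smaller issue: your ``hypothesis propagation'' claim that $\Qoppa(x)$ is $\Lnf$-acyclic does not follow merely from the decomposition into linear and bilinear parts, since the linear part is not determined by the bilinear part; it does hold after the Schwede--Shipley reduction, where the linear part is corepresented by a module over the $\Lnf$-acyclic endomorphism ring, but that reduction is exactly the step your write-up skips.
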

\begin{proof}
  First notice that the reduction step in the proof of \cite[Proposition 3.6 (i)]{LMMT20} goes through for $\LL$-theory with little modification. More explicitly, using Lemma \ref{lem: addititve_poincare_infty_cat_generated_finite_subcats} and the fact that $\Lspace^\oplus$ and $\GWspace^\oplus$ both commute with filtered colimits, we may reduce to the case where $(\calA,\Qoppa)$ is generated by a single object $X = x \oplus \DQoppa(X)$. In this case, the additive Schwede-Shipley theorem implies that $(\calA,\Qoppa)$ is an additive Poincar\'e category with underlying $\flat$-additive $\Proj^\omega\left(\Map_\calA(X,X)\right)$, equipped with a module of involution $M$. From \cite[A.2.6]{HS21appendix} we obtain a bifibre sequence of commutative groups
  \[
  \Kspace^\oplus(\Proj^\omega(\Map_\calA(X,X)),\Qoppa)_{\hCtwo} \to \GWspace^\oplus(\Map_\calA(X,X),\Qoppa) \to \Lspace^\oplus(\Map_\calA(X,X),\Qoppa).
  \]
  By \cite[Proposition 3.6(i)]{LMMT20}, the first term vanishes after $T(i)$-localisation for $1 \leq i \leq n$. Thus we get an identification 
  \[
  \LTi\GWspace^\oplus(\Map_\calA(X,X),\Qoppa) \xrightarrow{\simeq} \LTi\Lspace^\oplus(\Map_\calA(X,X),\Qoppa).
  \]
  For the other vanishing statements, we work exclusively with $\LL$-theory. The additive $\LL$-theory term is equivalent to $\LTi\LL(\Map_\calA(X,X),\Stab(\Qoppa))$. Since $\Map(X,X)$ is connective and $T(i)$-acyclic for $1 \leq i \leq n$, the same is true of $\LL$-theory by \cite[Corollary 15]{Lan22}. Not that the vanishing result for $i = 1$ is guaranteed by the assumption $p=2$. 
\end{proof}
Continuing with our proof strategy, we exploit the interplay of additive and Hermitian $\LL$-theory, we deduce the stable analogue of the above proposition in the manner of \cite[Proposition 3.6 (ii)]{LMMT20}.
\begin{prop}\label{prop: mapping_spectra_stable_L_theory}
  Let $i \geq 1$. Suppose $(\calC,\Qoppa)$ is a stable Poincar\'e category. If for each object $x \in \calC$ the endomorphism spectrum $\map_\calA(x,x)$ is $\Lnf$-acyclic, then $\K(\calC,\Qoppa)_{\hCtwo}$, $\GW(\calC,\Qoppa)$, and $\LL(\calC,\Qoppa)$ are all $T(i)$-acyclic for $1 \leq i \leq n$.
\end{prop}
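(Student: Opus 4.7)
The strategy is to reduce each of the three assertions to the additive statement of Proposition \ref{prop: mapping_spaces_additive_L_theory} by exploiting the descriptions of $\GW$- and $\LL$-spaces as geometric realizations of their additive counterparts applied to the Hermitian $\mathrm{Q}$-construction. The $\K$-theory orbits term is dispatched separately by appealing directly to \cite{LMMT20}.

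For $\K(\calC,\Qoppa)_\hCtwo$, I would simply note that by \cite[Proposition 3.6(ii)]{LMMT20} the hypothesis already forces $\K(\calC)$ itself to be $T(i)$-acyclic for $1 \leq i \leq n$. Taking $\hCtwo$-orbits preserves $T(i)$-acyclicity since $T(i)$-acyclic spectra form a colimit-closed ideal in $\Sp$, so this case is immediate.

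For $\LL$, I would invoke Corollary \ref{cor: description_additive_L} to rewrite
\[
\Omega^{\infty-1}\LL(\calC,\Qoppa) \simeq |\Lspace^\oplus \flatAdd \Qdot(\calC,\Qoppa^{[1]})|,
\]
and observe that it suffices to show each simplicial level $\Lspace^\oplus \flatAdd \mathrm{Q}_k(\calC,\Qoppa^{[1]})$ is $T(i)$-acyclic, since $\LTi$ commutes with geometric realizations in $\CGrp \simeq \Spcn$. The key verification is that each such $\flat$-additive Poincar\'e category satisfies the hypothesis of Proposition \ref{prop: mapping_spaces_additive_L_theory}: objects of $\mathrm{Q}_k(\calC,\Qoppa^{[1]})$ are Poincar\'e diagrams indexed over the twisted arrow category of $[k]$, and their endomorphism spectra are computed as finite limits of mapping spectra among the entries of the diagram in $\calC$. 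As the $\Lnf$-acyclic spectra form a localizing subcategory of $\Sp$, this property is inherited, and the subtle equivalence with space-level acyclicity noted before Proposition \ref{prop: mapping_spaces_additive_L_theory} then applies to the corresponding endomorphism spaces in the $\flat$-additive model.

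Finally, the above yields only $T(i)$-acyclicity of $\Omega^{\infty-1}\LL(\calC,\Qoppa)$, which controls the connective cover $\tau_{\geq -1}\LL(\calC,\Qoppa)$. To reach all homotopy groups I would rerun the same argument for each shifted Poincar\'e structure $(\calC,\Qoppa^{[j]})$; since the hypothesis depends only on the underlying stable category $\calC$, it is preserved under shifts, and the identification $\LL(\calC,\Qoppa^{[j]}) \simeq \Sigma^j\LL(\calC,\Qoppa)$ lets us sweep out every $\pi_k$ by taking $j$ arbitrarily large and negative. The $\GW$-theory claim follows identically using Lemma \ref{lem: description_additive_GW}, or alternatively from the fibre sequence $\K(\calC,\Qoppa)_\hCtwo \to \GW(\calC,\Qoppa) \to \LL(\calC,\Qoppa)$, two of whose terms are now controlled. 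The principal obstacle is the term-wise inheritance check for the Q-construction: it is morally clear but requires some unpacking of the definitions from \cite{CDHI,CDHII} and of the $\flatAdd$ functor to rigorously identify the endomorphism spectra in $\mathrm{Q}_k(\calC,\Qoppa^{[1]})$ as finite limits in $\Sp$ of spectra that are known to be $\Lnf$-acyclic by hypothesis.
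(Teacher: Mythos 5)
Your proposal follows essentially the same route as the paper: $\K$-theory via \cite[Proposition 3.6(ii)]{LMMT20}, the fundamental fibre sequence, the realization formula over the Hermitian $\mathrm{Q}$-construction, and a termwise appeal to Proposition \ref{prop: mapping_spaces_additive_L_theory}. The step you flag as the ``principal obstacle'' is closed in the paper in one line: $\map_\calC(F(i),F(j))$ is a module over the endomorphism ring $\map_\calC(F(i),F(i))$ (equivalently, a retract of the endomorphism spectrum of $F(i)\oplus F(j)$), hence $\Lnf$-acyclic, and acyclics are closed under the finite limit over $\TwAr(\TwAr(\Delta^k))^\op$ computing endomorphisms in $\Fun(\TwAr(\Delta^k),\calC)$. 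Your detour through shifted Poincar\'e structures to reach negative homotopy groups is sound but unnecessary: for $i\geq 1$ every bounded-above spectrum is $T(i)$-acyclic, so acyclicity of $\Omega^{\infty-1}\LL(\calC,\Qoppa)$, i.e.\ of $\tau_{\geq -1}\LL(\calC,\Qoppa)$, already yields acyclicity of the whole spectrum, and likewise for $\GW$.
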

\begin{proof}
  Immediately from \cite[Proposition 3.6(ii)]{LMMT20} we deduce that $T(i)$-local $\K$-theory vanishes for $1 \leq i \leq n$. Hence from the fundamental fibre sequence for stable Hermitian $\K$-theory
  \[
  \K(\calC)_{\hCtwo} \to \GW(\calC,\Qoppa) \to \LL(\calC,\Qoppa),
  \]
   we obtain the desired equivalence of $T(i)$-local $\GW$ and $\LL$-theory. We $T(i)$-localise the formula of Lemma \ref{lem: description_additive_GW}
     \[
    \LTi\Omega^{\infty-1}\GW(\calC,\Qoppa) \simeq |\LTi\GWspace^\oplus \flatAdd\Qdot(\calC,\Qoppa^{[1]})|.
    \] 
 The vanishing statements will follow from Proposition \ref{prop: mapping_spaces_additive_L_theory} provided we can prove that the right hand side is $T(i)$-acyclic for $1 \leq i \leq n$. For this, recall that the underlying stable category of the Hermitian Q-construction is the certain full subcategory of $\Fun(\TwAr(\Delta^k),\calC)$. It therefore suffices to show that for each $k$, all endomorphism spaces (equivalently spectra) of $\Fun(\TwAr(\Delta^k),\calC)$ are $T(i)$-acyclic.  We may write these endomorphism spectra as the following finite limit
\[
\map_{\Fun(\TwAr(\Delta^k),\calC)}(F,F) = \lim_{[i \to j] \in \TwAr(\TwAr(\Delta^k))^\op} \map_\calC(F(i),F(j)).
\]
Each mapping spectrum $\map_\calC(F(i),F(j))$ is a module over $ \map_\calC(F(i),F(i))$ and is thus $T(i)$-acyclic. Since the class of $T(i)$-acyclic spectra is closed under finite limits, the result follows. 
\end{proof}

If we are content to only have results for $\LL$-theory, we can make do with less restrictive assumptions with essentially the same proof. 
\begin{prop}
Let $n \geq 0$. Suppose $(\calA,\Qoppa)$ is an $\flat$-additive Poincar\'e category. If for each object $x \in \calA$, the endomorphism space $\Map_\calA(x,x)$, considered as a connective spectrum, is $T(n)$-acyclic, then $\Lspace^\oplus(\calA,\Qoppa)$ is $T(n)$-acyclic.
\end{prop}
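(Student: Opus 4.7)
The plan is to mirror the argument of Proposition \ref{prop: mapping_spaces_additive_L_theory} but with a substantially lighter endgame, since we are asked only about $\Lspace^\oplus$-acyclicity at a single telescope $T(n)$ and not about the more delicate vanishing of $\Kspace^\oplus(\calA,\Qoppa)_{\hCtwo}$ or $\GWspace^\oplus(\calA,\Qoppa)$. In particular, we can avoid any appeal to \cite[Proposition 3.6]{LMMT20}, whose input requires $\Lnf$-acyclicity of endomorphism spectra and not merely $T(n)$-acyclicity.

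First, I would reduce to the case that $(\calA,\Qoppa)$ is generated by a single self-dual object. Since by Corollary \ref{cor: additive_L_colimits} the functor $\Lspace^\oplus$ preserves filtered colimits in $\Catap$, we may write $(\calA,\Qoppa)$ as a filtered colimit of its full Poincar\'e subcategories generated by a single object of the form $X = x \oplus \DQoppa(x)$ (this is the same reduction already used in the proof of Proposition \ref{prop: mapping_spaces_additive_L_theory}, via the auxiliary lemma on filtered presentations). Applying the additive Schwede--Shipley theorem (Corollary \ref{prop: additive_poincare_schwede-shipley}) then identifies this reduced case with $(\Proj^\omega(R),\Qoppa_M)$, where $R \defeq \End(X)$ is a connective $\Eone$-ring which is $T(n)$-acyclic by the hypothesis on endomorphism spaces, and $M$ is an invertible $R$-module with involution.

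Finally, the additive-to-stable comparison $\Lspace^\oplus(\calA,\Qoppa) \simeq \Lspace(\Stab(\calA,\Qoppa))$ of \cite{HS21} reduces the problem to showing that $\LL(\Perf(R),\Stab(\Qoppa_M))$ is $T(n)$-acyclic. Since $R$ is connective and $T(n)$-acyclic, Land's \cite[Corollary 15]{Lan22} directly supplies the vanishing of $T(n)$-local $\LL$-theory of any Poincar\'e structure on $\Perf(R)$, exactly as invoked in the proof of Proposition \ref{prop: mapping_spaces_additive_L_theory}. The only real obstacle is the single-generator reduction in the $\flat$-additive (rather than stable idempotent complete) setting, but this is handled by precisely the auxiliary lemma already used earlier and requires no new work.
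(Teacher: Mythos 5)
Your reduction to a single generator and the passage through the additive--stable comparison $\Lspace^\oplus(\calA,\Qoppa) \simeq \Lspace(\Stab(\calA,\Qoppa)) \simeq \loopsinf\LL(\Map_\calA(x,x),\Stab(\Qoppa))$ is exactly the route the paper takes, and for heights $n \geq 1$ your endgame is essentially correct (with the small caveat that \cite[Corollary 15]{Lan22} as used in this paper covers $n \geq 2$ outright, while the case $n=1$ additionally relies on the standing assumption $p=2$; you should say this rather than claim the corollary applies verbatim to ``any'' Poincar\'e structure and any $n$ --- the passage from quadratic to arbitrary compatible structures is via Corollary \ref{cor: L_quadratic_Tn}).

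The genuine gap is the case $n=0$, which your statement includes but your appeal to Land does not cover: $T(0)=H\bbQ$, and Land's Corollary 15 does not give rational vanishing (rational quadratic $\LL$-theory of a connective ring is generally nonzero; the corollary concerns positive telescopic heights). The paper handles $n=0$ by a separate argument: since $\Map_\calA(x,x)$ is connective, Corollary \ref{cor: L_quadratic_Tn} and the truncating property of quadratic $\LL$-theory reduce to the discrete ring $\pi_0\Map_\calA(x,x)$; rational acyclicity of the endomorphism spectrum forces some multiple of $[\id_x]$ to vanish in $\pi_0$, say a power of a prime $p'$; then \cite[Proposition 4]{Lan22} gives that
\[
\LL^q(\pi_0\Map_\calA(x,x)) \otimes \bbQ \to \LL^q(\pi_0\Map_\calA(x,x)[1/p']) \otimes \bbQ
\]
is an equivalence, and the target vanishes because the localized ring is zero. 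Without an argument of this kind your proof only establishes the proposition for $n \geq 1$.
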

\begin{proof}
  As before, we reduce to the case where we are generated by a single object. From Corollary \ref{cor: description_additive_L}
    \begin{align*}
    \Lspace^\oplus(\Proj^\omega\left(\Map_\calA(x,x)\right),\Qoppa_M) &\simeq \Lspace(\Stab(\Proj^\omega(\Map_\calA(x,x))),\Stab(\Qoppa)) \\
    &\simeq \Lspace(\Perf(\Map_\calA(x,x),\Stab(\Qoppa))\\
    &=\loopsinf\LL(\Map_\calA(x,x),\Stab(\Qoppa)).
\end{align*}
The final term is $T(n)$-acyclic by \cite[Corollary 15]{Lan22} for $n\geq 2$. The vanishing result for $n = 1$ is guaranteed by the assumption $p=2$ and Corollary 15 of \textit{loc. cit}. For $n=0$ we argue as follows: by Corollary \ref{cor: L_quadratic_Tn} we may reduce to the discrete case since $\Map_\calA(x,x)$ is connective. Observe that $\pi_0\Map_\calA(x,x)$ is $H\bbQ$-cyclic if and only if some multiple of $1 = [\id_x]$ is nullhomotopic. Suppose some $p'$-th multiple of $[\id_x]$ is nullhomotopic for some prime $p'$. By \cite[Proposition 4]{Lan22}
\[
\LL^q(\pi_0\Map_\calA(x,x)) \otimes \bbQ \to \LL^q(\pi_0\Map_\calA(x,x)[1/p']) \otimes \bbQ
\]
is an equivalence. But the right hand side is identically $0$.
\end{proof}
\begin{cor}\label{cor: mapping_spectra_stable_L_theory}
  Let $n \geq 0$. Suppose $(\calC,\Qoppa)$ is a stable Poincar\'e category. If for each object $x \in \calC$ the endomorphism spectrum $\map_\calA(x,x)$ is $\LTi$-acyclic,  then $\LL(\C,\Qoppa)$ is $T(n)$-acyclic.
\end{cor}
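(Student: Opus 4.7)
The plan is to adapt the proof of Proposition \ref{prop: mapping_spectra_stable_L_theory}, restricted to the $\LL$-theory component and fed by the sharper additive input just established in the preceding proposition. First I would $T(n)$-localise the identification of Corollary \ref{cor: description_additive_L} to obtain
\[
\LTn \Omega^{\infty-1}\LL(\calC,\Qoppa) \simeq \bigl|\LTn \Lspace^\oplus \flatAdd\Qdot(\calC,\Qoppa^{[1]})\bigr|,
\]
and reduce the vanishing statement to showing that, at each simplicial level $k$, the commutative group $\LTn \Lspace^\oplus \flatAdd\Qdot_k(\calC,\Qoppa^{[1]})$ is trivial.

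By the preceding additive proposition, this reduces further to verifying that all endomorphism spectra in the underlying stable category of $\Qdot_k(\calC,\Qoppa^{[1]})$ are $T(n)$-acyclic. Since this underlying category sits as a full subcategory of $\Fun(\TwAr(\Delta^k),\calC)$, endomorphism spectra can be expressed as the finite limit
\[
\map_{\Fun(\TwAr(\Delta^k),\calC)}(F,F) \simeq \lim_{[i\to j]\in\TwAr(\TwAr(\Delta^k))^\op} \map_\calC(F(i),F(j)).
\]
Each factor $\map_\calC(F(i),F(j))$ is a module over $\map_\calC(F(i),F(i))$, hence $T(n)$-acyclic by assumption; closure of $T(n)$-acyclic spectra under finite limits finishes this step.

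To upgrade from the $\Omega^{\infty-1}$-level vanishing to vanishing of the full spectrum $\LL(\calC,\Qoppa)$, one iterates the Hermitian $\Qdot$-construction to access the higher negative homotopy groups, just as in the proof of Proposition \ref{prop: mapping_spectra_stable_L_theory}. The same finite-limit argument shows that iterating $\Qdot$ preserves the property that all endomorphism spectra are $T(n)$-acyclic, so the additive vanishing propagates through every delooping. The main point of caution, as in the $\GW$-case, is keeping track of the delooping procedure, but I expect this to introduce no new difficulty beyond what already appears in the proof of Proposition \ref{prop: mapping_spectra_stable_L_theory}.
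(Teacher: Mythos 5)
Your proposal is correct and follows essentially the same route as the paper: $T(n)$-localise the identification of Corollary \ref{cor: description_additive_L}, invoke the preceding additive vanishing proposition levelwise, and check that endomorphism spectra in the Hermitian $\Qdot$-construction are $T(n)$-acyclic by writing them as finite limits of modules over the given endomorphism rings. Your closing paragraph on iterating $\Qdot$ to handle the deloopings beyond the $\Omega^{\infty-1}$ level is a point the paper leaves implicit (``this argument is as before''), so including it is a harmless, indeed welcome, extra precaution rather than a deviation.
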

\begin{proof}
    If we assume $\calC$ is stable then using the formula 
    \[
   \Omega^{\infty-1}\LL (\calC,\Qoppa) \simeq |\Lspace^\oplus \Qdot(\calC,\Qoppa^{[1]})|
    \]
    of Corollary \ref{cor: description_additive_L}, the result will follow from Proposition \ref{prop: mapping_spaces_additive_L_theory} provided we can prove that the right hand side is $T(n)$-acyclic. This argument is as before.
\end{proof}

    In what follows, we consider  Poincar\'e categories of the form $(\Perf(A), \Qoppa)$ and investigate the effect that localisations of rings $\phi \from A \to LA$ have on the associated categories of (compact) modules. As discussed in \cite[Section 1.4]{CDHII}, such a localisation need not induce a Poincar\'e-Karoubi projection upon taking $\Perf(-)$. However we find   %in  \cite[Definition 1.4.3, Corollary 1.4.6]{CDHII} 
    that if $\Qoppa$ is the quadratic structure associated to an involution $\sigma \from A \to A^\op$ and $\phi$ is a smashing localisation whose map on module categories has perfectly generated fibres then the induced map on Poincar\'e categories
    \[
    (\Perf(A), \Qoppa_\sigma^q) \to (\Perf(LA), \Qoppa_{L\sigma}^q) 
    \]
    is a Poincar\'e-Karoubi projection. These two hypotheses will typically be satisfied in the cases of interest to us.
        
\begin{lem}\label{lem: karoubi_sequence}
    For any $\Eone$-ring spectrum $A$ with anti-involution $\sigma \from A \to A^\op$, there is a Poincar\'e-Karoubi sequence
        \begin{equation}
             \begin{tikzcd}
            (\calC_{>n} \otimes \Perf(A),\Qoppa^q_\sigma|_{\calC_{>n} \otimes \Perf(A)}) \arrow[r] & (\Perf(A),\Qoppa^q_\sigma) \arrow[r] & (\Perf(\Lnf A),\Qoppa^q_{\Lnf \sigma}), 
        \end{tikzcd}\label{eqn: karoubi_sequence_module_cats}
        \end{equation} 
        and the endomorphism spectrum of every object in $\calC_{>n} \otimes \Perf(A)$ is $\Lnf$-acyclic. Here $\calC_{> n}$ denotes the category of $p$-local finite spectra which are $K(0) \oplus \cdots \oplus K(n)$-acyclic.
    \end{lem}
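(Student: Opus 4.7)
The approach is to construct the Karoubi sequence at the level of stable $\infty$-categories from the smashing localisation $\Lnf$, lift it to a Poincar\'e-Karoubi sequence using the split Poincar\'e-Verdier formalism of \cite{CDHII}, and verify the endomorphism spectrum condition via a thick subcategory argument. Since $\Lnf$ is smashing on $\Sp$, tensoring with $\Lnf \bbS$ defines a smashing localisation on $\Mod_A$ whose acyclic subcategory is compactly generated by objects of the form $T \otimes M$ with $T \in \calC_{>n}$ and $M \in \Perf(A)$; following the template of \cite{LMMT20}, passing to compact objects produces a split Verdier sequence
\[
\calC_{>n} \otimes \Perf(A) \to \Perf(A) \to \Perf(\Lnf A)
\]
which becomes a Karoubi sequence in $\Catpidem$ after idempotent completion of the left term.

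To promote this to the Poincar\'e setting, I would invoke the formalism of induced Poincar\'e structures along split Verdier projections: a split Verdier sequence $\calD \to \calC \to \calE$ together with a Poincar\'e structure $\Qoppa$ on $\calC$ canonically upgrades to a Poincar\'e-Verdier sequence, with $\Qoppa|_\calD$ on the left and the structure on $\calE$ determined universally from $\Qoppa$ and the right adjoint of the projection. Here the left-hand restriction is tautologically $\Qoppa^q_\sigma|_{\calC_{>n} \otimes \Perf(A)}$, so the real content is the identification of the induced Poincar\'e structure on $\Perf(\Lnf A)$ with $\Qoppa^q_{\Lnf \sigma}$. This reduces to the compatibility of the bilinear functor $(M, N) \mapsto \map_{A \otimes A}(M \otimes N, A)$ with base change along $A \to \Lnf A$, which is clear from smashness together with the identification $\Lnf(M \otimes_A N) \simeq (\Lnf A) \otimes_A M \otimes_A N$.

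For the endomorphism spectrum claim, the full subcategory of objects $X \in \calC_{>n} \otimes \Perf(A)$ whose endomorphism spectrum is $\Lnf$-acyclic is closed under finite limits, colimits and retracts, so it suffices to check the generating objects $T \otimes M$ with $T \in \calC_{>n}$ and $M \in \Perf(A)$. For such an object one has $\map(T \otimes M, T \otimes M) \simeq T^\vee \otimes T \otimes \map_A(M, M)$, and $T^\vee \otimes T \in \calC_{>n}$ because $\calC_{>n}$ is a thick tensor ideal of $p$-local finite spectra. Any such finite spectrum, being $K(i)$-acyclic for $i \leq n$, is also $T(i)$-acyclic for $i \leq n$ by nilpotence (the telescope conjecture for \emph{finite} spectra, which is a theorem), and hence $\Lnf$-acyclic. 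Since $\Lnf$-acyclicity is stable under tensoring with arbitrary spectra, the full endomorphism spectrum is $\Lnf$-acyclic.

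The main obstacle is the identification in the second paragraph of the Poincar\'e structure induced on the cokernel with the quadratic structure $\Qoppa^q_{\Lnf \sigma}$ associated to the induced involution. Although morally transparent from the tensor-product formula defining $\Qoppa^q_\sigma$ and the smashness of $\Lnf$, making the identification precise calls for a careful unwinding of the universal construction of induced Poincar\'e structures along Verdier projections developed in \cite{CDHII}.
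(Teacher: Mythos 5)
Your proposal follows essentially the same route as the paper: the underlying stable Karoubi sequence and the $\Lnf$-acyclicity of endomorphism spectra come from the smashing-localisation argument of \cite[Lemma 3.7]{LMMT20} (your thick-tensor-ideal check of the generators $T \otimes M$ is just an inlined version of that), and the Poincar\'e upgrade is obtained by restricting $\Qoppa^q_\sigma$ on the kernel and left Kan extending it along the projection. The identification you flag as the main obstacle --- that the induced structure on $\Perf(\Lnf A)$ is $\Qoppa^q_{\Lnf\sigma}$ --- is precisely what the paper resolves by citing \cite[Corollary 1.4.6]{CDHII}, which is also why the lemma is stated for the quadratic structure, whose defining colimit $\rmB(x,x)_{\hCtwo}$ is compatible with left Kan extension.
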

\begin{proof}
    On underlying stable categories, the Karoubi sequence property follows immediately from \cite[Lemma 3.7]{LMMT20}, essentially from the fact that $\Lnf$ is a smashing localisation and the fact that $\calC_{>n}$ is perfectly generated. In the same lemma in \textit{loc. cit.}, they show that every endomorphism spectrum of every object of $\calC_{>n} \otimes \Perf(A)$ is $\Lnf$-acyclic. It remains to investigate the Poincar\'e structures. But this is by design, the first is restricted from the second and the third is left Kan extended from the second by \cite[Corollary 1.4.6]{CDHII}.
\end{proof}

When we attempt to draw conclusions about $\LL$-theory from the previous lemma we quickly run into a problem. The sequence of Lemma \ref{lem: karoubi_sequence} is in general not Verdier and 
% This can be seen, for example, in the case where $A = \bbS$: the map $
% \Perf(\bbS) \to \Perf(\Lnf \bbS)$ induced by the localisation is not a surjection on $\K _0$. Indeed, on $\K _0$ this map can be identified with the inclusion $\bbZ \to \bbZ \oplus \Pic(\Lnf \bbS)$. 
applying the $\LL$-theory functor to Equation \ref{eqn: karoubi_sequence_module_cats} does not necessarily yield a fibre sequence of spectra. Instead we may apply $\KaroubiL$, the Poincar\'e-Karoubi localising variant of $\LL$-theory to obtain the exact sequence
\begin{equation}
        \begin{tikzcd}
            \KaroubiL(\calC_{>n} \otimes \Perf(A),\Qoppa) \arrow[r] & \KaroubiL(\Perf(A),\Qoppa^q) \arrow[r] & \KaroubiL(\Perf(\Lnf A),\Qoppa^q). \label{eqn: exact_sequence_karoubi_L_theory}
        \end{tikzcd}
\end{equation} 

Given a Poincar\'e category $(\calC,\Qoppa)$, we will show that $ \LL (\calC,\Qoppa)$ and $ \KaroubiL(\calC,\Qoppa)$ can not be distinguished $T(n)$-locally for $n\geq 0$.

\begin{prop}[\cite{CDHIV}]\label{prop: cofib_L_Karoubi_L}
    There is a natural transformation of functors $\LL  \implies \KaroubiL\from \Catpidem \to \Sp$. Furthermore, for an idempotent complete Poincar\'e category $(\calC,\Qoppa)$ there is a cofibre sequence
    \begin{equation}
        \begin{tikzcd}
            \LL (\calC,\Qoppa) \arrow[r] &  \KaroubiL(\calC,\Qoppa) \arrow[r] & \left(\tau_{<0}\KaroubiK(\calC,\Qoppa)\right)^{\tCtwo}.
        \end{tikzcd}
    \end{equation}
\end{prop}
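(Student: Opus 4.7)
The plan is to obtain the natural transformation $\LL \Rightarrow \KaroubiL$ directly from the universal property of Poincar\'e-Karoubi localization: $\KaroubiL$ is by construction the Karoubi-localizing approximation of $\LL$, so the unit of the adjunction between localizing and Karoubi-localizing Poincar\'e functors $\Catpidem \to \Sp$ yields the desired transformation on the nose.

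For the cofibre computation, I would compare two fundamental fibre sequences. On the connective side we have, by \cite[Corollary 4.5.2]{CDHII},
\[
\K(\calC)_{\hCtwo} \to \GW(\calC,\Qoppa) \to \LL(\calC,\Qoppa),
\]
and on the Karoubi-localizing side the analogous sequence established in \cite{CDHIV},
\[
\KaroubiK(\calC)_{\hCtwo} \to \KaroubiGW(\calC,\Qoppa) \to \KaroubiL(\calC,\Qoppa).
\]
The natural comparison map between these sequences assembles into a ladder of cofibre sequences, and a standard $3\times 3$ argument in the stable $\infty$-category $\Sp$ shows that the sequence of vertical cofibres is itself a cofibre sequence.

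The leftmost vertical cofibre is straightforward: since $\calC$ is idempotent complete, one has $\tau_{\geq 0}\KaroubiK(\calC) \simeq \K(\calC)$, so the cofibre of $\K(\calC) \to \KaroubiK(\calC)$ is exactly $\tau_{<0}\KaroubiK(\calC)$, and passing to $C_2$-homotopy orbits (for the action induced by the duality of $\Qoppa$) gives the term $(\tau_{<0}\KaroubiK(\calC,\Qoppa))_{\hCtwo}$. The central vertical cofibre is the main technical ingredient: one must identify it with $(\tau_{<0}\KaroubiK(\calC,\Qoppa))^{\hCtwo}$. This is the Hermitian counterpart of the Bass-style description of $\KaroubiK$ in terms of $\K$, now carrying the $C_2$-action from the duality; I expect this identification to be the principal obstacle, and proving it appears to use the full development of the Karoubi $\GW$ machinery in \cite{CDHIV}.

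Granting these two identifications, the induced cofibre sequence of vertical cofibres reads
\[
(\tau_{<0}\KaroubiK(\calC,\Qoppa))_{\hCtwo} \to (\tau_{<0}\KaroubiK(\calC,\Qoppa))^{\hCtwo} \to \cofib(\LL(\calC,\Qoppa) \to \KaroubiL(\calC,\Qoppa)).
\]
By naturality of the comparison, the first map is the norm map for the $C_2$-action. Since the Tate construction is by definition the cofibre of the norm, the third term identifies with $(\tau_{<0}\KaroubiK(\calC,\Qoppa))^{\tCtwo}$, yielding the claimed cofibre sequence.
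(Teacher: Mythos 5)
First, a point of comparison: the paper does not prove this proposition at all --- it is imported verbatim from \cite{CDHIV} (note the citation in the proposition's header), and the surrounding text simply uses it. So there is no in-paper argument to measure your proposal against; what you have written is an attempted reconstruction of the cited result.

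As such a reconstruction, your formal skeleton is sensible and internally consistent: the ladder of fundamental fibre sequences, the identification $\cofib\bigl(\K(\calC)_{\hCtwo} \to \KaroubiK(\calC)_{\hCtwo}\bigr) \simeq \bigl(\tau_{<0}\KaroubiK(\calC)\bigr)_{\hCtwo}$ via exactness of orbits and cofinality ($\tau_{\geq 0}\KaroubiK \simeq \K$ on idempotent complete categories), and the final passage from the norm sequence to the Tate construction are all fine. The problem is that the entire content of the proposition has been displaced into the two steps you explicitly leave open: (i) the identification $\cofib\bigl(\GW(\calC,\Qoppa) \to \KaroubiGW(\calC,\Qoppa)\bigr) \simeq \bigl(\tau_{<0}\KaroubiK(\calC,\Qoppa)\bigr)^{\hCtwo}$, and (ii) the claim that the induced map $\bigl(\tau_{<0}\KaroubiK\bigr)_{\hCtwo} \to \bigl(\tau_{<0}\KaroubiK\bigr)^{\hCtwo}$ of vertical cofibres is the norm. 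Given the ladder, statement (i) together with (ii) is \emph{equivalent} to the proposition (one can run your $3\times 3$ argument in either direction), so the proposal reduces the theorem to a statement of exactly the same depth rather than proving it; and (ii) is asserted ``by naturality'' when in fact identifying an abstractly produced orbits-to-fixed-points map with the norm requires knowing how the comparison $\GW \to \KaroubiK^{\hCtwo}$ interacts with the boundary of the fundamental fibre sequence (this is part of the fundamental-square formalism of \cite{CDHII}/\cite{CDHIV}, not a formality). In short: as a self-contained proof there is a genuine gap at the central step, which you yourself flag as requiring the Karoubi $\GW$ machinery of \cite{CDHIV}; relative to the paper, which cites that reference for the whole statement, your treatment is no less complete, but it should be presented as a reduction plus citation rather than as a proof.
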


To investigate the  cofibre $\left(\tau_{<0}\KaroubiK(\calC,\Qoppa)\right)^{\tCtwo}$, we require a small lemma concerning the $T(n)$-acyclicity of the Tate construction.
\begin{lem}\label{lem: tate_bounded_above}
    For any bounded above spectrum $Y$ with a $C_2$-action, $Y^{\tCtwo} \otimes T(n) \simeq 0$ for $n \geq 1$. 
\end{lem}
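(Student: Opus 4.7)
The plan is to exploit the defining cofiber sequence
\[
Y_{\hCtwo} \to Y^{\hCtwo} \to Y^{\tCtwo}
\]
for the Tate construction and argue that tensoring with $T(n)$ sends the two outer terms to zero. Both vanishings will follow from the key observation that any bounded above spectrum $X$ is $T(n)$-acyclic for $n \geq 1$. To see this, write $T(n) \simeq V(n-1)[v_n^{-1}]$ for $V(n-1)$ a finite type-$n$ spectrum. The tensor product $V(n-1) \otimes X$ remains bounded above, and since the $v_n$ self-map raises homotopical degree by $|v_n| > 0$, iterating $v_n$ eventually pushes any element of $\pi_*(V(n-1) \otimes X)$ into the vanishing range; hence the $v_n$-telescope, which computes $T(n) \otimes X$, vanishes.

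I would then apply this observation to the two outer terms of the Tate cofiber sequence. Since tensoring with $T(n)$ preserves colimits and $(-)_{\hCtwo}$ is a $BC_2$-indexed colimit, we have $T(n) \otimes Y_{\hCtwo} \simeq (T(n) \otimes Y)_{\hCtwo}$, with the $C_2$-action induced from the one on $Y$; as $Y$ is bounded above, the key observation yields $T(n) \otimes Y \simeq 0$ and therefore $T(n) \otimes Y_{\hCtwo} \simeq 0$. For the homotopy fixed points, the spectral sequence $E_2^{s,t} = H^s(C_2;\pi_t Y) \Rightarrow \pi_{t-s} Y^{\hCtwo}$ is supported in $s \geq 0$ and in $t \leq N$ (where $N$ is a bound on $Y$), so $\pi_m Y^{\hCtwo} = 0$ for $m > N$; that is, $Y^{\hCtwo}$ is itself bounded above, and another application of the key observation gives $T(n) \otimes Y^{\hCtwo} \simeq 0$.

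Finally, since $T(n) \otimes (-)$ preserves cofiber sequences, we obtain
\[
T(n) \otimes Y^{\tCtwo} \simeq \cofib\bigl(T(n) \otimes Y_{\hCtwo} \to T(n) \otimes Y^{\hCtwo}\bigr) \simeq \cofib(0 \to 0) \simeq 0,
\]
as desired. There is no serious obstacle to this strategy: the heart of the argument is simply the elementary nilpotence of $v_n$ on bounded above spectra, which is invoked twice and then assembled through the Tate cofiber sequence. The only point requiring care is the boundedness of $Y^{\hCtwo}$, which is immediate from the spectral sequence because cohomological degrees are nonnegative.
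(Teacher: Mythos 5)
Your proof is correct and takes essentially the same route as the paper: both run the norm cofibre sequence $Y_{\hCtwo} \to Y^{\hCtwo} \to Y^{\tCtwo}$, kill the orbits by commuting $T(n)\otimes(-)$ past the colimit using that $Y$ itself is $T(n)$-acyclic, and note that $Y^{\hCtwo}$ is again bounded above, hence $T(n)$-acyclic. The only difference is that you spell out the two points the paper leaves implicit (the $v_n$-telescope argument showing bounded above spectra are $T(n)$-acyclic for $n\geq 1$, and the boundedness of $Y^{\hCtwo}$), which is fine.
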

\begin{proof} 
To see this we use the explicit formula for the Tate construction 
     \[
     Y^{\tCtwo} = \cofib\left( Y_{\hCtwo} \xrightarrow{\Nm_{C_2}}  Y^{\hCtwo} \right).
     \]
    Since the homotopy orbits are given by colimit, we have that $Y_{\hCtwo} \otimes  T(n) \simeq \left( Y \otimes  T(n)\right)_{\hCtwo}$ which vanishes. For the homotopy fixed points, we observe that if $Y$ is bounded above then so is $Y^{\hCtwo}$. Hence $Y^{\tCtwo} \otimes T(n)$ vanishes. 
\end{proof}

In fact, by \cite[Remark 22]{Lan22} the map $\LL(\calC,\Qoppa) \to \KaroubiL(\calC,\Qoppa)$ is also an equivalence after inverting $2$ so is in particular a rational equivalence.

\begin{cor}\label{cor: equiv_L_Karoubi_L}
    There is an equivalence $\LTn  \LL (\calC,\Qoppa) \simeq \LTn  \KaroubiL(\calC,\Qoppa)$ for $n \geq 0$.
\end{cor}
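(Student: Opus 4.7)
The plan is to apply $\LTn(-)$ to the cofibre sequence of Proposition \ref{prop: cofib_L_Karoubi_L} and show that the cofibre term vanishes $T(n)$-locally for every $n \geq 0$. Since $\LTn$ is exact, this will immediately yield the desired equivalence.

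For $n \geq 1$, the argument is direct: the cofibre is $\bigl(\tau_{<0}\KaroubiK(\calC,\Qoppa)\bigr)^{\tCtwo}$, which is the Tate construction applied to the bounded-above spectrum $\tau_{<0}\KaroubiK(\calC,\Qoppa)$ equipped with its canonical $C_2$-action. Lemma \ref{lem: tate_bounded_above} tells us precisely that Tate constructions of bounded-above $C_2$-spectra vanish upon tensoring with $T(n)$ for $n \geq 1$, so this term is $T(n)$-acyclic and disappears after $\LTn$.

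For $n = 0$, Lemma \ref{lem: tate_bounded_above} does not apply (its proof uses that $T(n) \otimes Y^{hC_2}$ vanishes for bounded-above $Y$, which fails when $T(n) = H\bbQ$). Instead I would invoke the remark immediately preceding the corollary: by \cite[Remark 22]{Lan22}, the map $\LL(\calC,\Qoppa) \to \KaroubiL(\calC,\Qoppa)$ is an equivalence after inverting $2$, and in particular is a rational equivalence. Since $T(0) = H\bbQ$ by convention, the rationalisation and $T(0)$-localisation agree, so the map is a $T(0)$-equivalence.

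The only mild subtlety is making sure the two cases are patched together cleanly; no case is technically difficult, since both inputs (the Tate vanishing lemma for $n \geq 1$ and the rational equivalence for $n = 0$) have already been established. Thus the proof amounts to three lines invoking these two facts.
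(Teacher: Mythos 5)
Your argument is correct and is essentially the paper's own: the corollary is deduced by applying $\LTn$ to the cofibre sequence of Proposition \ref{prop: cofib_L_Karoubi_L}, killing the Tate term via Lemma \ref{lem: tate_bounded_above} for $n \geq 1$, and handling $n = 0$ via the preceding remark that the map is an equivalence after inverting $2$ (hence rationally, i.e.\ $T(0)$-locally). Nothing to add.
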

% \begin{proof}
%    We shall show that $\LL(\calC,\Qoppa) \to \KaroubiL(\calC,\Qoppa)$ is a $T(i)$-equivalence. This follows without difficulty from two previous results results. From Proposition \ref{prop: cofib_L_Karoubi_L} we obtain the existence of a cofibre sequence
%        \begin{equation*}
%         \begin{tikzcd}
%             \LL (\calC,\Qoppa) \otimes T(i) \arrow[r] &  \KaroubiL(\calC,\Qoppa) \otimes T(i)\arrow[r] & \left(\tau_{<0}\KaroubiK(\calC,\Qoppa)\right)^{tC_2} \otimes T(i).
%         \end{tikzcd}
%     \end{equation*}
% The term $\left(\tau_{<0}\KaroubiK(\calC,\Qoppa)\right)^{tC_2} \otimes T(i)$ vanishes by Lemma \ref{lem: tate_bounded_above}.
% \end{proof}
Next, suppose we have a diagram $F \from I \to \Catpd$ of categories with perfect duality. From \cite[Section B.1]{CDHII}, by postcomposing with the functor $(\calC, \mathbb D_\calC) \mapsto (\calC, \Qoppa^q_{\mathbb D_\calC})$ we obtain a diagram Poincar\'e categories. We call this diagram the ``diagram of quadratic Poincar\'e categories associated to $F$''.

\begin{lem}\label{lem: cartesian_square_categories_duality}
    Suppose 
    \[
\begin{tikzcd}
{(\mathcal{W}, \mathbb{D}_\mathcal{W})} \arrow[d, "F"] \arrow[r, "G"] & {(\mathcal{Y}, \mathbb{D}_\mathcal{Y})} \arrow[d, "F'"] \\
{(\mathcal{X}, \mathbb{D}_\mathcal{X})} \arrow[r, "G'"]               & {(\mathcal{Z}, \mathbb{D}_\mathcal{Z})}                
\end{tikzcd}
    \]
    is a Cartesian square of categories with perfect duality and assume that the underlying diagram of idempotent complete categories is a Karoubi square.  Then the associated diagram of quadratic Poincar\'e categories is Cartesian. 
\end{lem}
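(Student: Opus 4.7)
The plan is to verify that the Poincaré pullback in $\Catp$ agrees with $(\calW, \Qoppa^q_{\mathbb{D}_\calW})$ by checking the underlying stable category and the Poincaré structure separately. As established in \cite[Section 6.1]{CDHI}, the forgetful functor $\Catp \to \Catex$ (and similarly $\Catpidem \to \Catperf$) creates limits: a pullback of Poincaré categories has underlying stable category the pullback of underlying stable categories, equipped with the pullback Poincaré structure
\[
F^* \Qoppa^q_{\mathbb{D}_\calX} \times_{(G'F)^* \Qoppa^q_{\mathbb{D}_\calZ}} G^* \Qoppa^q_{\mathbb{D}_\calY}.
\]
So the problem splits into (i) identifying the underlying category of the pullback, and (ii) identifying the pullback Poincaré structure with $\Qoppa^q_{\mathbb{D}_\calW}$.

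For (i), the hypothesis that the original square is Cartesian in $\Catpd$ gives $\calW \simeq \calX \times_\calZ \calY$ as categories with perfect duality, and the Karoubi square hypothesis on the underlying diagram ensures that this pullback is also the one computed in $\Catpidem$. The duality $\mathbb{D}_\calW$ is then automatically the pullback duality, by its compatibility with the projection functors.

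For (ii), I would check the comparison pointwise. For each $X \in \calW$ with images $A = F(X) \in \calX$, $B = G(X) \in \calY$, we need the canonical map
\[
\Qoppa^q_{\mathbb{D}_\calW}(X) \to \Qoppa^q_{\mathbb{D}_\calX}(A) \times_{\Qoppa^q_{\mathbb{D}_\calZ}(G'A)} \Qoppa^q_{\mathbb{D}_\calY}(B)
\]
to be an equivalence. Using the defining formula $\Qoppa^q_\mathbb{D}(-) \simeq \map(-, \mathbb{D}(-))_{hC_2}$ and the fact that in the pullback of stable categories, mapping spectra are computed as pullbacks of mapping spectra (since $\map_\calW(-,-)$ is corepresentable and corepresentable functors out of limits are limits), the bilinear parts form a Cartesian square of $C_2$-equivariant spectra. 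The comparison map thus reduces to the claim that $(-)_{hC_2} \colon \Sp^{BC_2} \to \Sp$ preserves this pullback square.

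The latter holds because $(-)_{hC_2}$ is a colimit-preserving functor between stable $\infty$-categories; it therefore preserves pushouts, which in a stable setting coincide with pullbacks. This completes the verification. The most delicate point, and the reason the Karoubi square hypothesis is essential, is ensuring that the underlying category comparison is clean: without it the pullback in $\Catp$ could fail to be computed by the naive pullback of underlying categories due to idempotent completion subtleties, and the pointwise comparison of Poincaré structures would then be comparing the wrong objects. Once that is in hand, the rest of the argument is formal, relying only on the explicit description of $\Qoppa^q$ and the stability of $\Sp^{BC_2}$.
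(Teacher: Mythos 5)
Your proposal is correct and takes essentially the same route as the paper's proof: both reduce the statement to the fact that limits in $\Catp$ are computed on underlying stable categories together with the pointwise pullback of Poincar\'e structures, identify the square of mapping spectra $\map(w,\mathbb{D}(w))$ as Cartesian, and conclude by applying the exact (colimit-preserving, hence pullback-preserving) functor $(-)_{\hCtwo}$.
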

\begin{proof}
Since the underlying diagram of stable categories is Cartesian, it remains to prove that the induced Poincar\'e structure on the cone category is pulled back from the Poincar\'e structures on the rest of a diagram. For a category with perfect duality $(\calC, \mathbb{D}_\calC)$, the associated quadratic Poincar\'e structure is given by 
     $\Qoppa^q_{{\mathbb{D}}_\calC} \from x \mapsto \map_\calC(c,\mathbb{D}_\calC(c))_{\hCtwo}$. By assumption, for a given $w \in \mathcal{W}$, we have a $\Ctwo$-equivariant diagram 
\[
\begin{tikzcd}
{\map_\mathcal{W}(w , \mathbb{D}_\mathcal{W}(w))} \arrow[d] \arrow[r]    & {\map_\mathcal{Y}(G(w) , (G \circ \mathbb{D}_\mathcal{W})(w))} \arrow[d]            \\
{\map_\mathcal{X}(F(w) , (F \circ \mathbb{D}_\mathcal{W})(w))} \arrow[r] & \begin{tabular}{c}
    $ {\map_\mathcal{Z}((F' \circ G)(w) , (F' \circ G \circ \mathbb{D}_\mathcal{W})(w))} $  \\
    $\simeq$ \\
    $ {\map_\mathcal{Z}((G' \circ F)(w) , (G ' \circ F \circ \mathbb{D}_\mathcal{W})(w))}  $
\end{tabular}
\end{tikzcd}
\]
which is Cartesian on underlying spectra. This diagram is equivalent to
\[
\begin{tikzcd}
{\map_\mathcal{W}(w , \mathbb{D}_\mathcal{W}(w))} \arrow[d] \arrow[r]    & {\map_\mathcal{Y}(G(w) , ( \mathbb{D}_\mathcal{Y} \circ G^\op)(w))} \arrow[d]            \\
{\map_\mathcal{X}(F(w) , (\mathbb{D}_\mathcal{X} \circ F^\op)(w))} \arrow[r] & \begin{tabular}{c}
    $ {\map_\mathcal{Z}((F' \circ G)(w) , (\mathbb{D}_\mathcal{Z} \circ F'^\op \circ G^\op )(w))} $  \\
    $\simeq$ \\
    $ {\map_\mathcal{Z}((G' \circ F)(w) , (\mathbb{D}_\mathcal{Z} \circ G'^\op \circ F^\op )(w))}  $
\end{tabular}
\end{tikzcd}.
\]
Applying the exact functor $(-)_{\hCtwo}$, we obtain the desired result.
\end{proof}

We will apply the above lemma to obtain an analogue of the chromatic fracture square.

\begin{cor}\label{cor: cocartesian_square_rings_involution}
  Suppose $(A,\sigma)$ is an $\Eone$-ring with anti-involution. Suppose $n \geq 1$ and let $T = T(1) \oplus \cdots \oplus T(n)$. Then the following is a Poincar\'e-Karoubi square:
    \[ 
    \begin{tikzcd}
{(\Perf(\Lnf A) , \Qoppa_{\Lnf \sigma}^q)} \arrow[rr] \arrow[d] &  & {(\Perf (L_T A) , \Qoppa_{L_T \sigma}^q)} \arrow[d] \\
{(\Perf (A[1/p]) , \Qoppa_{\sigma [1/p]}^q)} \arrow[rr]                               &  & {(\Perf{L_T A} [1/p] , \Qoppa_{L_T \sigma [1/p]}^q)}
\end{tikzcd} .
    \]
\end{cor}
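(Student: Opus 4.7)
The plan is to combine the classical chromatic arithmetic fracture square with Lemma~\ref{lem: cartesian_square_categories_duality}, which promotes a Cartesian square of categories with perfect duality whose underlying square is Karoubi to a Cartesian square of quadratic Poincar\'e categories. The role of the latter lemma is exactly to remove the burden of verifying anything directly about the quadratic Poincar\'e structures once we have set up the duality-level picture.

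First, I would establish the Karoubi square on underlying stable categories. Since $\Lnf$, $L_T$, and $(-)[1/p]$ are all smashing localizations, applying the argument of \cite[Lemma 3.7]{LMMT20} (which in turn rests on an arithmetic fracture argument in $\Sp$ together with the fact that each participating localization has perfectly generated fibre) gives that
\[
\begin{tikzcd}
\Perf(\Lnf A) \arrow[r] \arrow[d] & \Perf(L_T A) \arrow[d] \\
\Perf(A[1/p]) \arrow[r] & \Perf(L_T A[1/p])
\end{tikzcd}
\]
is a Karoubi square of stable $\infty$-categories. In particular, the vertical fibres are equivalent and the horizontal fibres are equivalent, with both controlled by $\calC_{>n}\otimes\Perf(A[1/p])$ in the manner of Lemma~\ref{lem: karoubi_sequence}.

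Next, I would upgrade this to a Cartesian square in $\Catpd$. Each of the four rings inherits an anti-involution from $\sigma$ by naturality of smashing localization (the smashing localization of an $\Eone$-ring with anti-involution canonically inherits one), and thus each of the four stable categories carries the canonical perfect duality $M\mapsto \map_{R}(M,R)$. Cartesianness in $\Catpd$ then reduces to verifying, object-by-object, that for a compact $\Lnf A$-module $M$ the natural comparison of mapping spectra
\[
\map_{\Lnf A}(M,\Lnf A) \longrightarrow \map_{L_T A}(L_T M, L_T A) \times_{\map_{L_T A[1/p]}(L_T M[1/p], L_T A[1/p])} \map_{A[1/p]}(M[1/p], A[1/p])
\]
is an equivalence. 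For $M = \Lnf A$ itself this is the arithmetic fracture square for $\Lnf A$; thickness of the class of compact objects for which it holds then propagates this to all of $\Perf(\Lnf A)$. With this compatibility in hand, Lemma~\ref{lem: cartesian_square_categories_duality} applies and promotes the square to a Cartesian square of quadratic Poincar\'e categories. Combined with the Karoubi property already verified on underlying stable categories, this is the desired Poincar\'e-Karoubi square.

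The main obstacle is the duality-compatibility in the second step: one must check that the canonical map from $\map_R(M,R)$ into the corresponding pullback of mapping spectra over the localized rings is an equivalence, and that these identifications are $C_2$-equivariantly compatible across the square. Everything else is either a direct invocation of an established result (the stable-level Karoubi property from \cite{LMMT20}) or an application of a previously proved lemma of this paper (Lemma~\ref{lem: cartesian_square_categories_duality}). I expect no surprises once the smashing-plus-compactness argument for the duality identification is written out.
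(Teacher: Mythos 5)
Your proposal follows essentially the same route as the paper: establish the underlying Karoubi square via the smashing-localisation fracture argument of \cite[Lemma 3.7]{LMMT20}, check that the dualities are compatible so that the square is a Cartesian square in $\Catpd$, and then invoke Lemma \ref{lem: cartesian_square_categories_duality} together with the Karoubi-projection observation of Lemma \ref{lem: karoubi_sequence}; the paper's proof is simply a terser version of exactly this. (Your parenthetical identification of both fibres with $\calC_{>n}\otimes\Perf(A[1/p])$ is off --- that category vanishes since objects of $\calC_{>n}$ are $p$-torsion --- but this aside plays no role in the argument.)
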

\begin{proof}
   Since the diagram in the statement arises from a Cartesian square of categories with perfect duality, by Lemma \ref{cor: cocartesian_square_rings_involution} it gives rise to the Cartesian square in $\Catpidem$, since limits in $\Catpidem$ are computed as in $\Catp$. Finally we observe that the vertical arrows are Poincar\'e-Karoubi projections by analogy with Lemma \ref{lem: karoubi_sequence}.
\end{proof}
\subsection{Proofs of Theorems \ref{thm: A} and \ref{thm: B}}
With the necessarily preliminaries complete, we are ready to prove Theorems \ref{thm: A}, \ref{thm: B}, and \ref{thm: C}.

\begin{proof}[Proof of Theorem \ref{thm: purity}]
\hfill
\begin{enumerate}
    \item We rely on three previous results. The the Poincar\'e-Karoubi sequence of Lemma \ref{lem: karoubi_sequence} induces an exact sequence of spectra 
    \[
    \KaroubiL(\calC_{> n} \otimes \Perf(A), \Qoppa_\sigma^q) \to \KaroubiL^q_\sigma(\Perf(A)) \to \KaroubiL^q_{\Lnf \sigma}(\Perf(\Lnpf A)).
    \]
    By Corollary \ref{cor: equiv_L_Karoubi_L}, after $T(n)$-localisation for $n \geq 0$, we may replace $\LTn\KaroubiL(-)$ with $\LTn\LL(-)$ and so we obtain an exact sequence
    \[
    \LTn\LL^q_\sigma(\calC \otimes \Perf(A)) \to \LTn\LL^q_\sigma(\Perf(A)) \to \LTn\LL^q_{\Lnf \sigma}(\Perf(\Lnpf A)).
    \]
The first term above vanishes by Proposition \ref{cor: mapping_spectra_stable_L_theory}.
    \item  We apply $\LTn \LL(-)$ for $n\geq1$ to the Poincar\'e-Karoubi square of Corollary \ref{cor: cocartesian_square_rings_involution} obtain the Cartesian square 
        \[ 
\begin{tikzcd}
\LTn\LL^q(\Lnf A) \arrow[rr] \arrow[d]            &  & \LTn\LL^q(\LL_TA) \arrow[d]        \\
{\LTn\LL^q(A[1/p])} \arrow[rr] &  & {\LTn\LL^q({L_T A} [1/p])}
\end{tikzcd}.
    \]
    Here we suppress the Poincar\'e structures for readability. Both spectra on the bottom row are $0$ - they are modules over $\LTn \LL(\bbS[1/p],\Qoppa^s_\id)$. However, $\bbS[1/p]$ is $T(i)$-acyclic for all $i \geq 1$ and so its $T(n)$-local $\LL$-theory vanishes by \cite{Lan22}. Hence the top horizontal arrow is an equivalence.
\end{enumerate}
\end{proof}
From this result, we deduce Theorem \ref{thm: B}.
\begin{proof}[Proof of Theorem \ref{thm: B}]
    The case $n=0$ is immediate from Part 1 of Theorem \ref{thm: A}.  We prove the result for $n\geq 1$. Consider the diagram of $\Eone$-rings with anti-involution,
    \[
\begin{tikzcd}
{(\tau_{\geq 0}A, \tau_{\geq 0} \sigma)} \arrow[d, ] \arrow[r] & {(A,  \sigma)} \arrow[d, ] \\
{(L_T \tau_{\geq 0}A, L_T \tau_{\geq 0} \sigma)} \arrow[r, ]       & {(L_TA,  L_T\sigma)}            
\end{tikzcd}
    \]
     Our previous results demonstrate that all maps above induce equivalences on $T(n)$-local quadratic $\LL$-theory. For the two vertical maps this follows from Theorem \ref{thm: A}. For this bottom horizontal map this is due to the natural equivalence of functors $L_T \tau_{\geq 0} \simeq L_T$. Thus the same is true of the top vertical map $(\tau_{\geq 0}A, \tau_{\geq 0} \sigma) \to (A,  \sigma)$. From \cite[Corollary 15]{Lan22} we know that $(\tau_{\geq 0} A, \tau_{\geq 0} \sigma)$ has vanishing $T(n)$-local quadratic $\LL$-theory. Thus all terms above have vanishing $T(n)$-local quadratic $\LL$-theory. By Theorem \ref{thm: magic_formula}, the same is true for Poincar\'e structure on $\Perf(A)$ compatible with $\sigma$. 
    \end{proof}

\newpage

\appendix

\section{Functoriality of Additive Poincar\'e categories}\label{appendix: functoriality}

We prove a variety of results allowing us to better understand limits and colimits in $\flat$-additive Hermitian and Poincar\'e categories. Many of the proofs we supply are essentially the same as those that appear in \cite[Section 1, Section 6]{CDHI}. As such, we shall refer extensively to the corresponding statements therein, giving the necessarily adjustments to the arguments to ensure their proofs go through in our setup.  To this end, we begin by establishing some functoriality results.

\begin{lem}\label{lem: LKE_additive}
 Let $f \from \calA \to \calB$ and $g \from \calC \to \calD$ be additive functors between $\flat$-additive categories. Then
\begin{enumerate}
    \item the left Kan extension functor
    \[
    f_! \from \Fun(\calA^\op, \CGrp) \to \Fun(\calB^\op, \CGrp)
    \]
    sends additive functors to additive functors;
    \item the left Kan extension functor
    \[
    (f \times g)_! \from \Fun(\calC^\op \times \calA^\op, \CGrp) \to \Fun(\calD^\op \times \calB^\op, \CGrp)
    \]
    sends additive bilinear functors to additive bilinear functors.
    \item the left Kan extension functor
    \[
    f_! \from \Fun(\calA^\op , \CGrp) \to \Fun(\calB^\op , \CGrp)
    \]
    sends additive quadratic functors to additive quadratic functors.
\end{enumerate}
\end{lem}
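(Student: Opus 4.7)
The plan is to prove the three parts in order, with (1) doing the bulk of the work and (2), (3) reducing to it through formal manipulations.

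For part (1), the approach is to use the pointwise colimit formula
\[
(f_! F)(b) \simeq \colim_{(a,\, b \to f(a))} F(a),
\]
indexed by $(\calA \times_\calB \calB_{b/})^\op$. To verify that $f_! F$ preserves biproducts, I compute the right hand side at $b = b_1 \oplus b_2$ and compare it with $(f_! F)(b_1) \oplus (f_! F)(b_2)$. Since biproducts in $\CGrp$ commute with colimits and $F$ is additive on $\calA$, the latter rewrites as a colimit of $F(a_1 \oplus a_2)$ over $(\calA \times_\calB \calB_{b_1/})^\op \times (\calA \times_\calB \calB_{b_2/})^\op$. The comparison then reduces to cofinality of the natural ``additive fold'' functor $(a_1, a_2) \mapsto a_1 \oplus a_2$ between these comma categories, which I expect to follow from the universal property of the direct sum in $\calB$ together with additivity of $f$.

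Part (2) should follow from (1) via a Fubini decomposition of $(f \times g)_!$ as iterated left Kan extension along $f^\op$ and $g^\op$. Since an additive bilinear functor is by definition additive in each variable separately, applying (1) in each variable independently yields additive bilinearity of $(f \times g)_! B$. For part (3), reducedness of $f_! F$ is essentially automatic: at $b = 0_\calB$ the indexing category is equivalent to $\calA^\op$, in which $0_\calA$ is terminal (being a zero object), so the colimit collapses to $F(0_\calA) \simeq 0$. For bilinearity of the cross-effect, I use the cofibre description
\[
B_F(a_1, a_2) \simeq \cofib\bigl(F(a_1) \oplus F(a_2) \to F(a_1 \oplus a_2)\bigr),
\]
together with the fact that left Kan extension preserves cofibres as a left adjoint. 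A Beck--Chevalley identification $(f \times f)_! (F \circ \oplus_\calA^\op) \simeq (f_! F) \circ \oplus_\calB^\op$, coming from the commutative square expressing additivity of $f$, then identifies $B_{f_! F}$ with $(f \times f)_! B_F$, which is additive bilinear by part (2).

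The main obstacle, both for the cofinality in (1) and for the Beck--Chevalley in (3), is verifying these exactness properties of squares of comma categories, which do not follow from abstract nonsense about left adjoints alone. Happily the underlying cofinality statement in $\calB$ is morally the same in both places, so proving it carefully once should unlock both parts.
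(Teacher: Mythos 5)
Your strategy is sound and genuinely different from the paper's. The paper never touches the pointwise formula: it factors $f$ through the free cosifted limit completions $(\calP_\Sigma(\calA^\op))^\op \to (\calP_\Sigma(\calB^\op))^\op$, identifies $f_!$ with a composite of the completion equivalence and restrictions along coproduct-preserving adjoints (so preservation of additivity becomes formal), and handles the quadratic case by passing through the comparison of $2$-excisive functors on $\calA$ and on $\Stab(\calA)$ from \cite{BGMN22}; its part (2) is reduced to (1) by viewing $(\id\times g)_!$ as postcomposition with $g_!$ on $\Fun(\calC^\op,\Fun(\calA^\op,\CGrp))$, essentially as you propose. The two steps you flag as ``expected'' are indeed where the content lies, and both are true and quickly closed: the fold functor $(f^\op\downarrow b_1)\times(f^\op\downarrow b_2)\to f^\op\downarrow(b_1\oplus b_2)$ admits a \emph{left} adjoint sending $(a,\phi\from b_1\oplus b_2\to f(a))$ to the pair of restrictions $\bigl((a,\phi\circ\iota_1),(a,\phi\circ\iota_2)\bigr)$ --- the adjunction identity on mapping spaces is exactly the universal property of the biproduct combined with $f(a_1\oplus a_2)\simeq f(a_1)\oplus f(a_2)$ --- and a functor admitting a left adjoint is cofinal; this same cofinality is precisely your Beck--Chevalley exchange $(f\times f)_!(F\circ\oplus_\calA^\op)\simeq(f_!F)\circ\oplus_\calB^\op$ in (3). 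Two small additions are needed to finish (3): the source term of your cofibre description also requires $(f\times f)_!(F\circ\mathrm{pr}_i)\simeq(f_!F)\circ\mathrm{pr}_i$ for the projections $\mathrm{pr}_i\from\calA^\op\times\calA^\op\to\calA^\op$, which holds because each comma category $f^\op\downarrow b$ has the initial object $\bigl(0_\calA,\,b\to f(0)\simeq 0\bigr)$ and is therefore weakly contractible, so collapsing one factor of the index category is cofinal; and one should verify that the equivalence $(f_!F)(b_1)\oplus(f_!F)(b_2)\simeq(f_!F)(b_1\oplus b_2)$ so produced is the canonical additivity map, which is routine. With these points supplied, your argument in fact proves slightly more than the lemma: it yields the identification $(f\times f)_!\rmB_F\simeq\rmB_{f_!F}$, which the paper states and proves separately afterwards (by reducing to $\calP_\Sigma$ and justifying a fibre/colimit exchange via surjectivity on $\pi_0$); your choice of the cofibre rather than the fibre description of the cross effect sidesteps that exchange entirely. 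What the paper's route buys is that additivity and $2$-excisiveness are preserved completely formally once the completion machinery and the \cite{BGMN22} comparison are available; what yours buys is a self-contained, more elementary argument that needs neither input and gives the cross-effect compatibility as a byproduct.
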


\begin{proof}
We follow the proof of \cite[Lemma 1.4.1]{CDHI} closely. 
\begin{enumerate}
    \item[(1) + (3)]We embed $\calA$ and $\calB$ into their respective free cosifted limit completions. This amounts to the diagram
\[
\begin{tikzcd}
\calA \arrow[d, "i"] \arrow[r, "f"] & \calB \arrow[d, "j"] \\
\left(\calP_\Sigma(\calA^\op)\right)^\op \arrow[r, "\tilde f"]             & \left(\calP_\Sigma(\calB^\op)\right)^\op          
\end{tikzcd}
\]
Let $g$ be left adjoint to $\tilde f$. Note that $g$ is additive and corresponds to restriction along $f$. Suppose $F \from \calC^\op \to \CGrp$ is an arbitrary functor. Upon taking $\Fun(-^\op,\CGrp)$ we get 
\[
f^\op_! F \simeq (j^\op)^* j^\op_! f^\op_!F \simeq (j^\op)^* \tilde f_!^\op i_!^\op F.
\]
Furthermore, $\tilde f^\op_! \simeq (g^\op)^*$ by uniqueness of adjoints. Plugging this into the earlier formula gives 
\[
f^\op_! F \simeq (j^\op)^* \tilde f_!^\op i_!^\op F \simeq (j^\op)^* (g^\op)^* i_!^\op F.
\]
Observe that $i_!^\op$ is inverse to the equivalence that witnesses $\calP_\Sigma(\calC^\op)$ as the free cosifted limit completion of $\calC$. In particular, $i_!^\op$ sends product preserving functors to product preserving functors. After precomposing by the finite coproduct preserving functors $g^\op$ and $j^\op$, we conclude that $(j^\op)^* (g^\op)^* i_!^\op F$ is additive. More generally, we can use the diagram, the equivalences coming from \cite[Theorem 2.19]{BGMN22},
\[
 \begin{tikzcd}
{\Fun_{\leq n}(\calA,\Sp)}                  & {\Fun_{\leq n}(\Stab(\calA),\Sp)} \arrow[l, "\simeq"]                         \\
{\Fun_{\leq n}(\calB,\Sp)} \arrow[u, "f^*"] & {\Fun_{\leq n}(\Stab(\calB),\Sp)} \arrow[l, "\simeq"] \arrow[u, "\Stab(f)^*"]
\end{tikzcd}.
\]
Passing to vertical left adjoints and using \cite[Proposition 6.1.5.4]{HA} we conclude that $i_!^\op F$ is additive (respectively quadratic) if $F$ is.
    \item[2] By symmetry the identity $(f^\op \times g^\op)_! \simeq (f^\op \times \id)_!(\id \times g^\op)$ we may assume that $f = \id$ and consider only $g$. We have that $(\id \times g)_!F|_{\{a\} \times \calB^\op} \simeq g_!(F|_{\{a\}\times \calA}$. Using the equivalences
    \begin{align*}
        \Fun(\calC^\op \times \calA^\op, \CGrp)  \simeq \Fun(\calC^\op,\Fun(\calA^\op,\CGrp), \\
        \Fun(\calC^\op \times \calB^\op, \CGrp)  \simeq \Fun(\calC^\op,\Fun(\calB^\op,\CGrp),
    \end{align*}
    we identify additive bilinear functors with additive functors to the (additive) category of additive functors. Furthermore, under this equivalence, $(\id \times g)_!$ identifies with post-composition with $g_!$. By the previous part of the lemma, $g_!$ this is additive and sends additive functors to additive functors.

\end{enumerate} 
\end{proof}

We obtain the following two corollaries.
\begin{cor}\label{cor: catah_catadd)cocartesian}
    The projection
    \[
    \Catah \to \Catadd
    \]
    is a cocartesian fibration, with pushforward along $f \from \calA \to \calB$ given by $\Qoppa \mapsto f_! \Qoppa$.
\end{cor}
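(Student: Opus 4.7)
The plan is to deduce the corollary by combining the Cartesian unstraightening definition of $\Catah$ with part (3) of the preceding lemma and the standard criterion recognising when a Cartesian fibration is also cocartesian. More precisely, I would invoke the general fact that a Cartesian fibration $p \from \calE \to \calB$ is a cocartesian fibration if and only if for every morphism $f \from b \to b'$ in $\calB$ the associated pullback functor $f^* \from \calE_{b'} \to \calE_b$ admits a left adjoint (and in that case the cocartesian pushforward is computed by this left adjoint fiberwise).

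First I would recall that $\Catah$ is defined as the Cartesian unstraightening of the functor $(\Cataddflat)^\op \to \Cat$ sending $\calA \mapsto \Funaq(\calA^\op,\CGrp)$, so that $\Catah \to \Cataddflat$ is already a Cartesian fibration with fiber over $\calA$ equal to $\Funaq(\calA^\op,\CGrp)$ and with Cartesian pullback along an additive functor $f \from \calA \to \calB$ given by precomposition $f^* \defeq (-) \circ f^\op$. Next I would observe that at the level of all presheaves, precomposition $f^* \from \Fun(\calB^\op,\CGrp) \to \Fun(\calA^\op,\CGrp)$ admits the usual left adjoint given by left Kan extension $f_!$.

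The key step is then to promote this adjunction from $\Fun(-^\op,\CGrp)$ to the full subcategories $\Funaq(-^\op,\CGrp)$ of additive quadratic functors. This is precisely what Lemma \ref{lem: LKE_additive}(3) guarantees: $f_!$ sends additive quadratic functors to additive quadratic functors, so it restricts to a functor $f_! \from \Funaq(\calA^\op,\CGrp) \to \Funaq(\calB^\op,\CGrp)$. Since $f^*$ obviously preserves additivity of quadratic functors (precomposition by an additive functor takes additive bilinear cross-effects to additive bilinear cross-effects), the restricted $f^*$ still admits $f_!$ as a left adjoint, the unit and counit being inherited from the ambient adjunction on presheaves.

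Applying the recognition criterion yields that $\Catah \to \Cataddflat$ is a cocartesian fibration with cocartesian pushforward along $f$ given on the fiber by $\Qoppa \mapsto f_!\Qoppa$, which is the stated formula. The only subtle point in the whole argument is the preservation statement in Lemma \ref{lem: LKE_additive}(3); everything else is formal, so I do not expect any serious obstacles.
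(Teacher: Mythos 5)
Your proposal is correct and matches the paper's (implicit) argument: the paper deduces the corollary directly from Lemma \ref{lem: LKE_additive}(3) exactly as you do, via the standard criterion that a Cartesian fibration whose pullback functors $f^*$ admit left adjoints is cocartesian, with the left adjoint given by the restricted left Kan extension $f_!$ (mirroring the corresponding step in \cite{CDHI}). No gaps; the only point needing care is the restriction of the adjunction to $\Funaq$, which you address correctly.
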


According to the general principle guaranteeing (co)limits in total spaces of (co)Cartesian fibrations, we may argue as in \cite[Proposition 6.1.2]{CDHI}, to obtain the existence of limits and colimits in $\Catah$.
\begin{cor}\label{cor: catah_limits_colimits}
    The $\infty$-category $\Catah$ admits all limits and colimits and these are preserved by the forgetful functor $\pi \from \Catah \to \Catadd$.
\end{cor}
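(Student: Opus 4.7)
The plan is to mimic the argument of \cite[Proposition 6.1.2]{CDHI} by leveraging the fact that $\pi \from \Catah \to \Catadd$ is simultaneously a Cartesian fibration (by the very definition of $\Catah$ as the Cartesian unstraightening of $\calA \mapsto \Funaq(\calA^\op,\CGrp)$) and a cocartesian fibration (by Corollary \ref{cor: catah_catadd)cocartesian}). Since the base $\Catadd$ admits all limits and colimits (a standard fact about additive $\infty$-categories, e.g.\ as a reflective subcategory of $\Cat$), and since each fibre $\Funaq(\calA^\op,\CGrp)$ admits all limits and colimits (computed in $\Fun(\calA^\op,\CGrp)$, as remarked just after Definition \ref{defn: additive_hermitian_poincare}), the existence statement reduces to verifying that the transition functors have the right exactness properties.

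For colimits I would use the cocartesian structure. Given a diagram $D \from I \to \Catah$ with image $\bar D \from I \to \Catadd$, form $\calA \defeq \colim_I \bar D$ with structure maps $f_i \from \bar D(i) \to \calA$, and set
\[
\Qoppa_\infty \defeq \colim_{i \in I}\ (f_i)_! \Qoppa_i \quad \in \Funaq(\calA^\op,\CGrp),
\]
where the cocartesian pushforward $(f_i)_!$ is left Kan extension, which lands in additive quadratic functors by Lemma \ref{lem: LKE_additive}(3) and commutes with colimits as a left adjoint. The standard criterion for colimits in total spaces of cocartesian fibrations (cf.\ \cite[Proposition 6.1.2]{CDHI}) then produces $(\calA, \Qoppa_\infty)$ as the colimit of $D$ in $\Catah$, and by construction $\pi$ preserves it.

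Dually, for limits I would use the Cartesian fibration structure. With the same notation but $\calA \defeq \lim_I \bar D$ and projections $p_i \from \calA \to \bar D(i)$, set
\[
\Qoppa_{-\infty} \defeq \lim_{i \in I}\ p_i^* \Qoppa_i \quad \in \Funaq(\calA^\op,\CGrp);
\]
pullback $p_i^*$ preserves additive quadratic functors (since precomposing a biadditive cross effect with $p_i^\op \times p_i^\op$ is again biadditive, the additive functor $p_i$ being in particular coproduct preserving), and preserves limits since it is a right adjoint. Again the standard criterion shows $(\calA, \Qoppa_{-\infty})$ is the limit, and $\pi$ preserves it.

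The main conceptual point — and the only step that is not entirely formal — is the verification that pushforward preserves additive quadraticity, but this has already been done in Lemma \ref{lem: LKE_additive}; the analogous statement for pullback is immediate from the definition of additive quadratic. Everything else is an application of the bicartesian fibration machinery, so I do not anticipate any real obstacle.
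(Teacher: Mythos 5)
Your argument is correct and is essentially the paper's own proof: the paper simply invokes the bifibration criterion of \cite[Proposition 6.1.2]{CDHI} (using Corollary \ref{cor: catah_catadd)cocartesian} and the Cartesian structure from the definition of $\Catah$), and your explicit formulas $\Qoppa_\infty = \colim_i (f_i)_!\Qoppa_i$ and $\Qoppa_{-\infty} = \lim_i p_i^*\Qoppa_i$ are exactly the recipe recorded in \cite[Remark 6.1.3]{CDHI}, which the paper cites immediately afterwards. No substantive difference in approach.
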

In our situation, where a (certain subcategory of a) functor category is bifibred over a base category with limits and colimits, one has an explicit recipe for computing these limits and colimits as given in \cite[Remark 6.1.3]{CDHI}.  The next is the analogue of \cite[Proposition 1.4.3]{CDHI}. Fortunately the arguments given in \textit{loc. cit.} continue to hold.
\begin{lem}
    Let $f \from \calA \to \calB$ be an additive functor between $\flat$-additive categories and let $\Qoppa$ be an additive quadratic functor on $\calA$. Then there is a canonical natural equivalence
\[
(f \times f)_!\rmB_\Qoppa \implies \rmB_{f_! \Qoppa} 
\]
\end{lem}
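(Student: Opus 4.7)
The plan is to use the direct sum decomposition of an additive quadratic functor to reduce the claim to Beck--Chevalley base-change identifications. Since $\Qoppa$ is additive quadratic we have the natural splitting
\[
\oplus^* \Qoppa \simeq \pi_1^* \Qoppa \oplus \pi_2^* \Qoppa \oplus \rmB_\Qoppa
\]
in $\Fun(\calA^\op \times \calA^\op, \CGrp)$, where $\oplus, \pi_1, \pi_2 \from \calA \times \calA \to \calA$ denote the sum and the two projections; the analogous splitting on the $\calB$-side is available because $f_! \Qoppa$ is again additive quadratic by Lemma~\ref{lem: LKE_additive}(3). Since $(f \times f)_!$ is a left adjoint and in particular preserves direct sums, applying it to the $\calA$-side decomposition yields
\[
(f \times f)_! \oplus^* \Qoppa \simeq (f \times f)_! \pi_1^* \Qoppa \oplus (f \times f)_! \pi_2^* \Qoppa \oplus (f \times f)_! \rmB_\Qoppa,
\]
so the proof reduces to producing the two Beck--Chevalley equivalences $(f \times f)_! \pi_i^* \Qoppa \simeq \pi_i^* f_! \Qoppa$ and $(f \times f)_! \oplus^* \Qoppa \simeq \oplus^* f_! \Qoppa$. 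Matching direct summands in the resulting decomposition of $\oplus^*(f_! \Qoppa)$ will then isolate the desired identification $(f \times f)_! \rmB_\Qoppa \simeq \rmB_{f_! \Qoppa}$ as the canonical complement to the projection summands.

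The projection case is the easy one. By the pointwise formula for left Kan extension, the canonical comparison map is computed via the forgetful projection $(f \downarrow b_1) \times (f \downarrow b_2) \to (f \downarrow b_i)$ of slice categories, and cofinality of this projection reduces to checking that the complementary factor $(f \downarrow b_j)$ has weakly contractible classifying space. This is automatic in the $\flat$-additive setting: the zero object of $\calA$ paired with the unique zero map relating $b_j$ and $f(0) \simeq 0$ in $\calB$ supplies an extremal object, and a slice with an initial (resp.\ terminal) object has contractible nerve.

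The main obstacle is the Beck--Chevalley equivalence for $\oplus$, since the comparison functor $(f \downarrow b_1) \times (f \downarrow b_2) \to (f \downarrow (b_1 \oplus b_2))$ induced by direct sum is not cofinal on the nose: a general map $b_1 \oplus b_2 \to fa$ need not split as $\phi_1 \oplus \phi_2$. Here one exploits the semiadditive structure of $\calB$, which canonically identifies such a map with a pair of maps $b_i \to fa$ landing in the \emph{same} object $a \in \calA$, together with the direct sum decomposition of $\Qoppa(a_1 \oplus a_2)$ afforded by the additive quadraticity of $\Qoppa$. Unwinding both colimits using these two inputs and then applying the already-verified projection Beck--Chevalley to absorb the spurious $\pi_i^*$ contributions produces the required equivalence and completes the proof.
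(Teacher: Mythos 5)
Your reduction of the lemma to two Beck--Chevalley statements is structurally sound: the splitting $\oplus^*\Qoppa \simeq \pi_1^*\Qoppa \oplus \pi_2^*\Qoppa \oplus \rmB_\Qoppa$, the fact that $(f\times f)_!$ preserves direct sums and cofibres, and the projection case all work (the comma category $\calA^\op\times_{\calB^\op}(\calB^\op)_{/b}$ computing $(f_!\Qoppa)(b)$ pointwise has the object $(0, 0\from b\to f(0)\simeq 0)$ as an initial object, so it is weakly contractible and the projection of comma categories is cofinal). The genuine gap is the step you yourself call the main obstacle: the equivalence $(f\times f)_!\oplus^*\Qoppa \simeq \oplus^* f_!\Qoppa$ is never proved. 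Your closing sentence --- decompose $\Qoppa(a_1\oplus a_2)$, absorb the $\pi_i^*$ contributions via the projection case, and thereby ``produce the required equivalence'' --- is circular: after absorbing the projection summands, the remaining summand on one side is $(f\times f)_!\rmB_\Qoppa$ and on the other is $\rmB_{f_!\Qoppa}$, and identifying these is exactly the lemma being proved. None of the inputs you list computes the colimit over the comma category of $b_1\oplus b_2$ in terms of the product of the comma categories of $b_1$ and $b_2$, which is where all the content sits.

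The step can in fact be completed, but not for the reason you dismiss: your claim that the sum functor $\sigma\from(\text{comma of }b_1)\times(\text{comma of }b_2)\to(\text{comma of }b_1\oplus b_2)$ is not cofinal is wrong. It is cofinal, because it admits a left adjoint built from precisely the observation you make: since $b_1\oplus b_2$ is a coproduct, an object $(a,\phi\from b_1\oplus b_2\to f(a))$ is the same as a pair of maps $\phi_i\from b_i\to f(a)$ into the same $a$, and $(a,\phi)\mapsto((a,\phi_1),(a,\phi_2))$ is left adjoint to $\sigma$; the unit is the fold map $a\oplus a\to a$, and the adjunction identity on mapping spaces holds because a morphism $(a,\phi)\to(a_1\oplus a_2,\alpha_1\oplus\alpha_2)$ in the comma category amounts to a map $a_1\oplus a_2\to a$ compatible over $b_1\oplus b_2$, i.e.\ to a compatible pair of maps $a_i\to a$. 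A functor admitting a left adjoint is cofinal, which gives the $\oplus$ Beck--Chevalley equivalence (with no quadraticity needed at this point) and completes your argument. Filled in this way, your route is genuinely different from the paper's, which instead reduces to the universal left Kan extension into $(\calP_\Sigma(\calA^\op))^\op$, computes it by sifted colimits, and exchanges the colimit with the fibre defining the cross effect, using $\pi_0$-surjectivity to identify fibres taken in $\CGrp$ with fibres taken in spectra; your pointwise cofinality argument avoids that exchange entirely, but as submitted the decisive step is missing.
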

\begin{proof}
     The argument used to prove \cite[Proposition 1.4.3]{CDHI} goes through, with some modification. We can reduce to the case $\calB = (\PP_\Sigma(\calA^\op))^\op$ and cosifted systems $(x_i)_{i \in I}$ and $(y_j)_{j \in J}$ Following their proof we must show that 
    \begin{align*}
         \colim_{(i,j) \in I^\op \times \J^\op} \fib\left[ \Qoppa(x_i \oplus y_j) \to \Qoppa(x_i) \oplus \Qoppa(y_j)\right] \\\
        \to  \fib\left[  \colim_{(i,j) \in I^\op \times \J^\op}  \Qoppa(x_i \oplus y_j) \to  \colim_{(i\in I^\op} \Qoppa(x_i) \oplus  \colim_{j \in  \J^\op} \Qoppa(y_j) \right]
    \end{align*}
   is an equivalence. We claim that this is equivalent to the following 
   \begin{align*}
         \colim_{(i,j) \in I^\op \times \J^\op} \fib\left[ \Qoppa(x_i \oplus y_j) \to \Qoppa(x_i) \oplus \Qoppa(y_j)\right] \\\
        \to  \fib\left[  \colim_{(i,j) \in I^\op \times \J^\op}  \Qoppa(x_i \oplus y_j) \to  \colim_{(i,j) \in I^\op \times \J^\op} \Qoppa(x_i) \oplus  \colim_{(i,j) \in I^\op \times \J^\op} \Qoppa(y_j) \right]
    \end{align*}
    is an equivalence. To see this, observe that $I^\op \times J^\op \to I^\op$ (similarly for the projection to $J^\op$) is cofinal. By Quillen's Theorem $A$, we must show that $(I^\op \times J^\op) \times_{I^\op} \times I^\op_{i/}$ is weakly contractible. This simplicial set is equivalent to $J^\op \times I^\op_{i/}$. Taking geometric realisations we obtain 
    \[
    |J^\op \times I^\op_{i/}| \simeq  |J^\op| \times |I^\op_{i/}| \simeq * \times |I_{i/}| \simeq * \times * \simeq *
    \]
     from \cite[Proposition 5.5.8.7]{HTT}. The theorem will now follow if we can exchange certain limits and colimits in $\CGrp$, which we identify with $\Spcn$. Because $S^0$ is compact projective in $\Spcn$, we observe that we have surjections on $\pi_0$. Therefore the fibre taken in $\Spcn$ coincides with the fibre taken in $\Sp$ and the limit-colimit exchange is justified. 
\end{proof}
% \mdef{We need to prove 6.1.4, 6.1.5 in CHDI but we have the analogues of all necessary lemmas they use so everything just works...Of course I need to write this in some way but it is pointless to copy their proofs verbatim}

In this section we record three results, the additive analogues of \cite[Proposition 6.1.4, 6.1.5, 6.1.7]{CDHI}. The proofs are rather long but go through with little modification and so are omitted. The reader may verify that the proofs only depend on the results just we have proven and are otherwise formal.
\begin{prop}[Closure properties of $\Catap$ in $\Catah$]\label{prop: closure_properties_catap_catah}
    \hfill
    \begin{enumerate}
        \item Let $\overline p \from K^\rhd  \to \Catadd$ be a colimit diagram of $\flat$-additive categories and let $f \from \A_\infty \defeq  \overline p (\infty) \to \B$ be an additive functor to a cocomplete $\flat$-additive category $\B$. Then the canonical map
        \[
        \colim_{k \in K}(i_k)_! i_k^* f \to f
        \]
        is an equivalence.
        \item The category $\Catap$ has all small limits and colimits and the inclusion $\Catap \to \Catah$ preserves all small limits and colimits.
        \item The categories $\Catadd, \Catah, \Catap$ are all semiadditive.
    \end{enumerate}
\end{prop}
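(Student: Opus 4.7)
The plan is to follow the template set by \cite[Propositions 6.1.4, 6.1.5, 6.1.7]{CDHI}, adapting each argument from the stable to the $\flat$-additive setting. The ingredients needed are already in place: Lemma~\ref{lem: LKE_additive} ensures that left Kan extensions along additive functors preserve the relevant classes of functors, Corollary~\ref{cor: catah_limits_colimits} guarantees the existence of all limits and colimits in $\Catah$, the formula $(f \times f)_! \rmB_\Qoppa \simeq \rmB_{f_!\Qoppa}$ identifies the bilinear part of a pushed-forward structure, and Lemma~\ref{lem: pn_filtered_colimits} shows that $\Catadd \to \Cat$ preserves filtered (indeed sifted) colimits.

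For part~(1), the plan is to reduce to the two cases of finite coproducts and sifted colimits, using that every small colimit is built from these. On finite coproducts, the colimit in $\Catadd$ is computed as the product of underlying categories with the pointwise additive structure (by semiadditivity of $\Catadd$, which we prove independently in part~(3)); the required formula then reduces to the universal property of the direct sum of additive categories, which says that an additive functor out of $\A_1 \oplus \A_2$ decomposes as the direct sum of its restrictions. For sifted colimits, Lemma~\ref{lem: pn_filtered_colimits} implies $\A_\infty$ is the $\infty$-categorical colimit of the $\A_k$, so the formula becomes the standard projection formula for left Kan extensions along the inclusions $i_k$, with Lemma~\ref{lem: LKE_additive} ensuring that the output stays inside the additive functors. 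For part~(2), limits are formed as in Corollary~\ref{cor: catah_limits_colimits}, and the Poincar\'e condition is preserved because the duality on the limit is assembled componentwise: given $D_k \from \A_k^\op \to \A_k$, one checks that the componentwise functor $D_\infty$ represents $\lim_k i_k^* \rmB_{\Qoppa_k}$. For colimits, one uses part~(1) to write $\Qoppa_\infty = \colim_k (i_k)_! \Qoppa_k$, applies the Kan extension/bilinear-part compatibility to identify $\rmB_{\Qoppa_\infty} \simeq \colim_k (i_k \times i_k)_! \rmB_{\Qoppa_k}$, and exhibits a duality $D_\infty$ on $\A_\infty$ by glueing the $D_k$ along the transition maps, using the cocartesian fibration structure of Corollary~\ref{cor: catah_catadd)cocartesian}.

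For part~(3), semiadditivity of $\Catadd$ follows directly: the product of two $\flat$-additive categories carries the pointwise $\flat$-additive structure, also serves as the coproduct in $\Catadd$ because an additive functor out of a product decomposes via the two projections, and the shear map is an equivalence object-wise. Semiadditivity of $\Catah$ then follows from the cocartesian fibration $\Catah \to \Catadd$: both products and coproducts are computed fiberwise over the semiadditive base, and an explicit check shows the canonical comparison is an equivalence. Finally, $\Catap \subset \Catah$ inherits semiadditivity from part~(2), since both products and coproducts in $\Catah$ of Poincar\'e categories are Poincar\'e. The main obstacle across all three parts is the verification that the Poincar\'e condition is stable under colimits in step~(2); while no genuinely new phenomenon arises compared with the stable case, one must carefully track the identification of the duality on the colimit and verify its naturality, which is the technical heart of the argument as in CDHI 6.1.5.
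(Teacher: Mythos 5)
Your proposal is correct and follows essentially the same route as the paper, which itself omits the details and simply asserts that the arguments of \cite[Propositions 6.1.4, 6.1.5, 6.1.7]{CDHI} carry over using exactly the ingredients you cite (Lemma \ref{lem: LKE_additive}, Corollaries \ref{cor: catah_catadd)cocartesian} and \ref{cor: catah_limits_colimits}, the identification $(f\times f)_!\rmB_\Qoppa \simeq \rmB_{f_!\Qoppa}$, and the colimit-preservation of $\Catadd \to \Cat$). Your sketch of how each CDHI argument adapts is consistent with that plan, so there is nothing substantive to add.
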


We record an easy corollary of the previous results that we will have need for later.
\begin{lem}\label{lem: addititve_poincare_infty_cat_generated_finite_subcats} 
Suppose $(\calA,\Qoppa)$ is a $\flat$-additive category. Then  $(\calA,\Qoppa)$ is a filtered colimit over its $\flat$-additive subcategories generated by finitely many elements under direct sums and retracts and which are closed under the duality $\DQoppa$. More precisely, if let $K$ be the poset of full $\flat$-additive subcategories $\calA_k$ of $\calA$ generated by finitely many elements and closed under duality of $\calA$.
\[
(\calA,\Qoppa) \simeq \colim_{K} (\calA_k, \Qoppa|_{\Qoppa_{\calA_k}})
\]
\end{lem}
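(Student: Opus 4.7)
The plan is to model the proof on \cite[Observation 1.2.19]{CDHI}, adapted to the $\flat$-additive setting using the functoriality results developed earlier in this appendix.

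First I would verify that the indexing poset $K$ is filtered. Given any finite collection $\calA_{k_1},\dots,\calA_{k_n} \in K$ with each $\calA_{k_i}$ generated by a finite set $S_i$ closed under $\DQoppa$, the $\flat$-additive subcategory generated by $\bigcup_i S_i$ is again in $K$: it is generated by a finite set, and that set is already closed under the duality since each $S_i$ was. This gives an upper bound in $K$, so $K$ is filtered. Next I would check cofinality/exhaustion on underlying categories: for any finite tuple of objects $x_1,\dots,x_m \in \calA$ and any finite tuple of morphisms between them, the $\flat$-additive subcategory generated by $\{x_1,\dots,x_m,\DQoppa(x_1),\dots,\DQoppa(x_m)\}$ lies in $K$ and contains the given data. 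Because the forgetful functor $\Catap \to \Cataddflat \to \Cat$ preserves filtered colimits (by Lemma \ref{lem: pn_filtered_colimits}, or rather the argument in its proof), this shows that the canonical map $\colim_K \calA_k \to \calA$ is essentially surjective and fully faithful, hence an equivalence in $\Cataddflat$.

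Next I would upgrade this to the level of Hermitian structures. Let $i_k \from \calA_k \to \calA$ be the inclusions. Applying Proposition \ref{prop: closure_properties_catap_catah}(1) to the colimit diagram $k \mapsto \calA_k$ with target $\calA$ and $f = \id_\calA$, one obtains the formula
\[
\colim_{k \in K} (i_k)_! (i_k)^* \Qoppa \xrightarrow{\simeq} \Qoppa
\]
in $\Funaq(\calA^\op,\CGrp)$, which precisely says that $\Qoppa$ is the colimit of its restrictions. Since each $\calA_k$ is closed under $\DQoppa$, the restriction $(i_k)^* \Qoppa$ inherits a duality from $\Qoppa$, so each $(\calA_k, (i_k)^*\Qoppa)$ is itself $\flat$-additive Poincar\'e and the inclusion $i_k$ is a Poincar\'e functor. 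Combining the colimit on underlying categories with the colimit on Poincar\'e structures, and invoking the explicit recipe for colimits in the total space of a cocartesian fibration (Corollary \ref{cor: catah_limits_colimits}), exhibits $(\calA,\Qoppa)$ as the filtered colimit of the $(\calA_k,(i_k)^*\Qoppa)$ in $\Catah$. Finally, by Proposition \ref{prop: closure_properties_catap_catah}(2), the inclusion $\Catap \hookrightarrow \Catah$ preserves (small) colimits, so this colimit is computed in $\Catap$ as well.

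The main obstacle is bookkeeping around ``closure under duality'' when checking that $K$ is filtered and exhausts $\calA$: one must ensure that adjoining dualities of finitely many generators still yields a finitely generated $\flat$-additive subcategory, which is clear because duality commutes with direct sums and retracts and each generator's dual is a single object. Once that is settled, the Poincar\'e upgrade is formal given the results already assembled in the appendix.
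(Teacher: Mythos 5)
The paper records this lemma without proof (it is stated as ``an easy corollary of the previous results''), so there is no official argument to compare against; your proposal supplies essentially the intended one. The verification that $K$ is filtered and exhausts $\calA$, the observation that each $\calA_k$ inherits a Poincar\'e structure with duality $\DQoppa|_{\calA_k}$ because it is closed under $\DQoppa$ (so the inclusions are duality-preserving), and the final passage through the cocartesian-fibration description of colimits in $\Catah$ together with Proposition \ref{prop: closure_properties_catap_catah}(2) are all correct and are exactly what the appendix results are set up to deliver. One small bookkeeping point: when producing upper bounds in $K$ you should first enlarge a finite generating set $S$ to $S \cup \DQoppa(S)$ and then note that the subcategory it generates is closed under the duality because $\DQoppa$ is an equivalence carrying direct sums and retracts to direct sums and retracts; your phrasing assumes the generating sets are already duality-closed, which is harmless but worth saying.

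The one step that does not work as written is the appeal to Proposition \ref{prop: closure_properties_catap_catah}(1) ``with target $\calA$ and $f=\id_\calA$'': that proposition requires the target to be a cocomplete $\flat$-additive category (which $\calA$ need not be) and concerns \emph{additive} functors, whereas $\Qoppa$ is quadratic, and in any case its conclusion for $f=\id_\calA$ says nothing about $\Qoppa$. What you actually need is the formula $\colim_{k}(i_k)_!\,i_k^*\Qoppa \xrightarrow{\ \simeq\ } \Qoppa$ in $\Funaq(\calA^\op,\CGrp)$ for the quadratic functor itself. This is true and easy to check directly with the tools already in the appendix: colimits in $\Funaq(\calA^\op,\CGrp)$ are computed pointwise in $\Fun(\calA^\op,\CGrp)$ (as noted after Definition \ref{defn: additive_hermitian_poincare}, and using Lemma \ref{lem: LKE_additive}(3) to know each $(i_k)_!\,i_k^*\Qoppa$ is again additive quadratic); since each $i_k$ is fully faithful, $(i_k)_!\,i_k^*\Qoppa$ agrees with $\Qoppa$ on $\calA_k$, compatibly with the counits to $\Qoppa$; and for a fixed $x\in\calA$ the subposet of those $k$ with $x\in\calA_k$ is cofinal in $K$, so evaluating the colimit at $x$ gives $\Qoppa(x)$. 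Replacing your invocation of part (1) by this pointwise argument (or by the quadratic-functor statement of \cite[Proposition 6.1.4]{CDHI} that part (1) is adapted from) closes the gap, and the rest of your proof goes through.
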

The next result is the adaptation of standard Morita theory-type arguments to classify symmetric biadditive functors on $\Proj^\omega(R)$.
\begin{prop}\label{prop: additive_hermitian_schwede-shipley}
 Let $(\A,\Qoppa)$ be a small idempotent complete additive Hermitian category whose underlying additive category $\A$ is generated by a single element. Then $(\A,\Qoppa) \simeq (\Proj^\omega(R), \Qoppa_M)$ for $R$ a connective $\Eone$-ring with associated symmetric biadditive functor $\rmB_{\Qoppa_M}(X,Y) \simeq \Map_{R \otimes R}(X \otimes Y, M)$ for an $(R \otimes R)$-module $M$ equipped with a $C_2$-action compatible with the flip action on $R \otimes R$. 
\end{prop}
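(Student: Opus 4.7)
The plan is to adapt the Morita-theoretic Schwede-Shipley strategy, first identifying the underlying additive category with $\Proj^\omega(R)$ and then classifying the cross-effect by a module with $C_2$-action via a biadditive Yoneda argument.

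First, since $\A$ is small, idempotent complete, and generated under finite direct sums and retracts by a single object $x$, the additive Schwede-Shipley theorem provides an equivalence of $\flat$-additive $\infty$-categories
\[
\A \xrightarrow{\simeq} \Proj^\omega(R), \qquad y \mapsto \Map_\A(x,y),
\]
where $R := \Map_\A(x,x)$ is a connective $\Eone$-ring, and $x$ corresponds to $R$. A convenient way to see this is to pass to the stabilisation $\Stab(\A)$, invoke the standard Schwede-Shipley theorem for compactly generated stable $\infty$-categories, and restrict to connective compact projective objects; $\flat$-additivity (weak idempotent completeness) ensures no extraneous summands appear. Transporting $\Qoppa$ across this equivalence, we may assume $\A = \Proj^\omega(R)$ and $\Qoppa$ is an additive quadratic functor on it.

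Next, I would extract the module $M$ with its structure. The cross effect $\rmB_\Qoppa \from \Proj^\omega(R)^\op \times \Proj^\omega(R)^\op \to \CGrp$ is an additive biadditive functor equipped with a canonical $C_2$-action interchanging its two arguments. Setting $M := \rmB_\Qoppa(R,R) \in \CGrp$, the biadditivity endows $M$ with two commuting $R^\op$-actions (coming from the two copies of $\End_\A(x)^\op$ acting by precomposition in each slot), which assemble into an $R \otimes R$-module structure. The argument-swap symmetry on $\rmB_\Qoppa$ specialises at $(R,R)$ to a $C_2$-action on $M$ compatible with the flip on $R \otimes R$. A biadditive Yoneda argument, using that $R$ generates $\Proj^\omega(R)$ under finite sums and retracts and that both sides are biadditive in $(X,Y)$, then identifies
\[
\rmB_\Qoppa(X,Y) \simeq \Map_{R \otimes R}(X \otimes Y, M)
\]
as $C_2$-symmetric biadditive functors.

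Finally, I would lift this identification from the bilinear part to the full additive Hermitian structure and package it as $\Qoppa_M$. This is the step I expect to be the main obstacle: a reduced additive quadratic functor to $\CGrp$ carries strictly more data than its cross effect, namely also its linear part and the comparison map to the $C_2$-invariants of the diagonal. I would handle this by invoking the $n=2$ case of the polynomial approximation results for additive functors (see \cite[Theorem 2.19]{BGMN22}), analogous to the stable proof of \cite[Proposition 3.1.6]{CDHI}, and verifying that the remaining structural data of $\Qoppa$ is exactly encoded by the $C_2$-equivariance of $M$ over the flip of $R \otimes R$. This yields the desired Hermitian equivalence $(\A,\Qoppa) \simeq (\Proj^\omega(R),\Qoppa_M)$.
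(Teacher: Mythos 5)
Your proposal is essentially correct and reaches the stated conclusion, but it takes a more pointwise route than the paper for the key step. After the same Schwede--Shipley reduction $\A \simeq \Proj^\omega(R)$, the paper does not fix the given $\Qoppa$ and argue by generation; it classifies the \emph{entire} category of symmetric biadditive functors at once, via the chain of equivalences $\Fun^{\mathrm{biadd}}(\Proj^\omega(R)^\op \times \Proj^\omega(R)^\op, \CGrp) \simeq \Fun^{\amalg}(\Proj^\omega(R^\op \otimes R^\op), \CGrp) \simeq \calP_\Sigma(\Proj^\omega(R\otimes R)) \simeq \Mod(R\otimes R)^{\mathrm{cn}}$, under which the swap symmetry corresponds precisely to a $C_2$-action on $M$ covering the flip. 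Your route --- set $M \defeq \rmB_\Qoppa(R,R)$, observe the two residual actions, and invoke ``a biadditive Yoneda argument'' --- lands in the same place, but the phrase you leave vague is exactly where the coherence lives: the comparison transformation $\rmB_\Qoppa(X,Y) \to \Map_{R\otimes R}(X\otimes Y, M)$, together with its full naturality and $C_2$-equivariance, cannot be produced objectwise; making it precise amounts to extending $\rmB_\Qoppa$ along $\Proj^\omega(R) \hookrightarrow \Mod(R)^{\mathrm{cn}}$ using the universal property of $\calP_\Sigma$, i.e.\ to running the paper's chain of equivalences. So your sketch is fine in outline, but the paper's categorical classification is the rigorous form of your Yoneda step, and buys the equivariant naturality for free.

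Your final step is not needed, and as phrased it over-claims. The proposition only asserts that, after transporting $\Qoppa$ across the equivalence, its cross effect is representable as $\Map_{R\otimes R}(X\otimes Y, M)$; the symbol $\Qoppa_M$ is just notation for the transported Hermitian structure with that bilinear part (this is how it is used in the subsequent Poincar\'e corollary). In particular you should not try to verify that ``the remaining structural data of $\Qoppa$ is exactly encoded by the $C_2$-equivariance of $M$'': that is false in general, since additive quadratic functors with the same symmetric bilinear part can differ in their linear part (e.g.\ quadratic versus symmetric structures over the same duality). The paper accordingly stops once the bilinear part is classified, and no appeal to the polynomial approximation results of \cite{BGMN22} is required here.
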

\begin{proof}
Since $\A$ is generated by a single element we conclude from the ordinary additive Schwede-Shipley theorem that $\calA \simeq \Proj^\omega(R)$.  Next we consider biadditive functors from such categories. First we remark that we may identify $\Proj^\omega(R)^\op$ with $\Proj^\omega(R^\op)$ via $M \mapsto \Map_R(M,R)$. Consider the equivalences
\begin{align*}
    \Fun^{\mathrm{biadd}}(\Proj^\omega(R^\op )\times \Proj^\omega(R^\op) ,\CGrp) &\simeq \Fun^\amalg(\Proj^\omega(R^\op) \otimes^\amalg \Proj^\omega(R^\op) , \CGrp)  \\
    &\simeq \Fun^\amalg(\Proj^\omega(R^\op \otimes R^\op) , \CGrp) .
\end{align*}
We know that since $\Proj^\omega(R^\op \otimes R^\op)$ is semiadditive, we have the equivalence
\[
\Fun^\amalg(\Proj^\omega(R^\op \otimes R^\op) , \CGrp) \simeq \Fun^\Pi(\Proj^\omega(R^\op \otimes R^\op) , \CGrp).
\]
Using semiadditivity once again, we have that 
\[
 \Fun^\Pi(\Proj^\omega(R^\op \otimes R^\op) , \CGrp) \simeq \Fun^\Pi(\Proj^\omega(R^\op \otimes R^\op) , \Spaces)
\]
and the latter is precisely the definition of $\calP_\Sigma((\Proj^\omega(R^\op \otimes R^\op)^\op)$. Identifying $\Proj^\omega(R^\op \otimes R^\op)$ with $\Proj^\omega(R \otimes R)$ we obtain that 
\[
 \Fun^{\mathrm{biadd}}(\Proj^\omega(R^\op )\times \Proj^\omega(R^\op)) \simeq \calP_\Sigma(\Proj^\omega(R \otimes R)) \simeq \Mod(R \otimes R)^\mathrm{cn}
\]
where $\Mod(R \otimes R)^\mathrm{cn}$ is the smallest full subcategory of $\Mod(R \otimes R)$ containing $R \otimes R$ and closed under arbitrary colimits and retracts. This equivalence follows from  \cite[Example C.1.5.11]{SAG}. Finally, as with the stable case, the condition for a biadditive functor $F$ to be symmetric translates to the condition that the corresponding $(R\otimes R)$-module $M$ is endowed with the structure of a $C_2$-action compatible with the flip action on $R \otimes R$.
\end{proof}

\printbibliography
\end{document}